\newtheorem{thm}{Theorem}
\newtheorem{defin}{Definition}
\newtheorem{remark}{Remark}
\newtheorem{prop}{Proposition}
\newtheorem{lemma}{Lemma}
\newtheorem{lemmaA}{Lemma}
\newtheorem{coro}{Corollary}
\newenvironment{proof}[1][Proof]{\textbf{#1.} }{\hfill $\square$}
\newcommand{\Pro}{\mathbf{P}}
\newcommand{\E}{\mathbb{E}}
\newcommand{\R}{\mathbbm{R}}
\newcommand{\fG}{\mathfrak{G}}
\newcommand{\al}{\alpha}
\newcommand{\eps}{\varepsilon}
\newcommand{\ds}{\displaystyle}
\newcommand{\bD}{\mathbb{D}}
\newcommand{\bL}{\mathbb{L}}
\newcommand{\rma}{\mathrm{a}}
\newcommand{\mP}{\mathbf{P}}
\newcommand{\mE}{\mathbf{E}}
\newcommand{\cF}{\mathcal{F}}
\newcommand{\cL}{\mathcal{L}}
\newcommand{\cK}{\mathcal{K}}
\title{On the fundamental solution of heat and  stochastic heat equations}
\author{Marina Kleptsyna\thanks{Laboratoire Manceau de Math\'ematiques, Le Mans Universit\'e, Avenue O. Messiaen, 72085 Le Mans, Cedex 9, France.
e-mail: {\tt  marina.kleptsyna@univ-lemans.fr, alexandre.popier@univ-lemans.fr,}}
\,\,\, Andrey Piatnitski \thanks{The Arctic University of Norway, campus Narvik,  P.O.Box 385,
8505 Narvik, Norway\ and \ Institute for Information Transmission Problems of RAS, 19, Bolshoy Karetny per., Moscow 127051,
Russia.\ e-mail: {\tt apiatnitski@gmail.com}}
\,\,\, and Alexandre Popier\footnotemark[3]
}
\date{\today}
\begin{document}
\maketitle

\begin{abstract}
We consider the generic divergence form second order parabolic equation with coefficients that are regular in the spatial variables and just measurable in time. We show that the spatial derivatives of its fundamental solution admit upper bounds that agree with the Aronson type estimate and only depend on the ellipticity constants of the equation and the $L^\infty$ norm of the spatial derivatives of its coefficients.

 We also study the corresponding stochastic partial differential equations and prove that under natural assumptions
  on the noise the equation admits a mild solution, given by anticipating stochastic integration.
\end{abstract}

\vspace{0.5cm}
\noindent \textbf{2010 Mathematics Subject Classification.} 60H15, 60H07, 35A08,\\ 35C15, 35K08.

\smallskip
\noindent \textbf{Keywords.} Heat kernel, Aronson's estimates, stochastic partial differential equation, mild solution.

\section{Introduction}\label{s_intro}

In the first part of the paper we study the fundamental solution $\Gamma=\Gamma(x,t,y,s)$
 of the parabolic equation
\begin{equation}\label{eq:PDE_fund_sol}
\frac{\partial u}{\partial t} \displaystyle(x,t)=\mathrm{div} \Big[\rma\Big(x,t\Big)\nabla u(x,t) \Big],
\quad (x,t)\in\mathbb R^d\times(0,T].
\end{equation}

The existence  of the fundamental solution $\Gamma$ and the description of its properties is an old story that has given rise to a vast literature (see among others \cite{frie:64,lady:solo:ural:67,porp:eide:84,eide:zhit:98,kryl:08} and the references therein). One of the most famous result in this field is the Aronson estimate (see Inequality \eqref{eq:aronson_estim} and \cite[Theorem 7]{aron:68}), which holds under the uniform ellipticity condition on the diffusion matrix $\rma$ (see condition \ref{H1}). No regularity assumption on the coefficients of $\rma$ is required. To obtain similar estimates on the spatial  derivatives of $\Gamma$, it is usually assumed in the existing literature that the matrix $\rma$ is H\"older continuous w.r.t. both $x$ and $t$ (see \cite{lady:solo:ural:67}, Chapter IV, sections 11 to 13 or \cite{frie:64}, Chapter I): for some $\hbar \in (0,1)$
$$
\left| \rma \left(x,t\right) - \rma \left(x',t' \right) \right| \leq K_\rma \left( |x-x'|^\hbar + \left| t-t'\right|^{\hbar/2}\right).
$$
Notice that this setting is not well adapted to the stochastic framework, for example if $\rma(x,t) = a(x,\xi_t)$ where $\xi$ is a diffusion process. Indeed, in this case  the constant $K_\rma$ depends on the continuity properties of $\xi$ and is random
(see for example \cite{baud:14} for details). Hence the constants in the estimate of $\nabla_x \Gamma$
need not be uniformly bounded  if we follow directly this construction.

Our first goal in the paper is to obtain  Aronson type estimates for the spatial derivatives of $\Gamma$, without any regularity assumption on the dependence $t \mapsto \rma(x,t)$. We impose only  a uniform Lipschitz continuity condition on the dependence $x\mapsto \rma(x,t)$. Then the upper bounds only depend on the ellipticity constants and $L^\infty$ norm of the gradient of the coefficients
(see Theorem \ref{thm:general_result_Aronson_estim}).

There is a vast literature devoted to the behaviour of fundamental solutions of parabolic equations.
One of the questions of interest is how does the behaviour of fundamental solution of a parabolic equation
defined on a (non-compact) Riemannien manifold  depend on the properties of the metric.
This question was studied in the works \cite{DAVIES198816}, \cite{Grigoryan1994}, \cite{saloff-coste2010}, \cite{grig:09} and some others.
Similar problems were considered  for operators defined on fractal sets, see \cite{barlow2003kernels},  on groups, see \cite{MR1218884},  and on metric spaces,
see \cite{grig:12}, \cite{MR3809455}. It is usually assumed that the Radon measure on the metric space satisfies the so-called volume doubling property.\\
There is also a number of papers that focus on interior bounds for heat kernel, see for example \cite{LIERL20144189,lierl:18}. \\
Fundamental solutions of parabolic equations with time dependent coefficients have been investigated in \cite{Guenther2002}, \cite{Daners_2000}.\\
A number of estimates for the derivatives of heat kernels on manifolds and metric spaces was obtained in
\cite{JIAYU1991293}, \cite{GRIGORYAN1995363}, \cite{Stroock1998UpperBO}, \cite{hu:li:10} and some other works including recent work \cite{coul:jiang:kosk:19}.\\
{Several papers also focuses on the fundamental solution of diffusion operators generated by Markov processes with jumps \cite{chen:hu:xie:17,chen:zhang:16} or Dirichlet forms in \cite{chen:kuma:wang:19}. }\\
However, in all the above mentioned works the question studied in the present paper has not been raised.\\

When our paper was submitted we leant that a number of results closely related to that of  Theorem \ref{thm:general_result_Aronson_estim}
have been obtained in the recent work \cite{chen_at_al:2017}. In this work, for parabolic operators in non-divergence form with time dependent coefficient, the regularity of heat kernel and solutions w.r.t. spatial variables is studied.
%
In particular, the result of our
Theorem \ref{thm:general_result_Aronson_estim} can be derived from the results of this work.  However,
the approach used in \cite{chen_at_al:2017} is rather different.


\bigskip
In the second part of this paper we deal with the following stochastic heat equation:
\begin{equation}\label{eq:SPDE}
d v\displaystyle(x,t)-\mathrm{div} \Big[\rma \Big(x,t\Big)\nabla v(x,t) \Big]\,dt = \displaystyle G\left(x,t\right)\,dB_t\
\end{equation}
with the initial condition $v(x,0)=0$ (see Remark \ref{rem:initial_cond} for more general initial value). $B$ is a standard Brownian motion, generating the filtration $\mathbb F = (\cF_t, \ t \geq 0)$. The matrix $\rma$ is supposed to be a measurable function from $\R^d \times [0,+\infty[ \times \Omega$ into $\R^{d\times d}$ and for each $(x,t) \in \R^d \times [0,+\infty[$, $\rma (x,t)$ is $\cF_t$-measurable. This stochastic partial differential equation (SPDE in short) in divergence form is somehow classical and among many other we refer to the books \cite{frie:64, lady:solo:ural:67} on PDE in divergence form, \cite{dapr:zabc:14,dala:khos:09,kryl:rozo:79,pard:93,wals:86} and the references therein on SPDE.  {The results of these works have than been extended in several directions, among them are: H\"ormander's condition \cite{kryl:99,kryl:14}, H\"older spaces \cite{chow:jiang:94,miku:prag:14}, $L^p$-spaces \cite{deni:mato:stoi:05,kryl:96,miku:prag:13,miku:rozo:01}, Laplace-Beltrami operator \cite{sowe:98}. }

Our aim is to prove that the SPDE  in \eqref{eq:SPDE} admits a mild solution $v$ given by:
\begin{eqnarray} \label{eq:mild_sol}
v(x,t) & = & \int_0^t \int_{\R^d} \Gamma(x,t,y,s) G\left(y,s \right)dy dB_{s},
\end{eqnarray}
where $\Gamma$ is the fundamental solution of the equation in
\eqref{eq:PDE_fund_sol}.

If the matrix $\rma$ is deterministic, $\Gamma$ is also deterministic and the existence of a mild solution $v$ given by \eqref{eq:mild_sol} is well known (see \cite[Chapter 5]{wals:86}). However, when $\rma$ is random, the stochastic integral in \eqref{eq:mild_sol} has to be defined properly since $\Gamma(x,t,y,s)$ is measurable w.r.t. the $\sigma$-field $\mathcal{F}_{t}$ generated by the random variables $B_u$ with $u \leq t$. In other words Equation \eqref{eq:mild_sol} involves an anticipating integral. To our best knowledge, there is only one work on this topic by Alos et al. \cite{alos:leon:nual:99}. Compared to our setting, the authors in \cite{alos:leon:nual:99} consider a space-time Wiener process, but the matrix $\rma$ is H\"older continuous in time\footnote{At the end of \cite[section 5]{alos:leon:nual:99}, the authors make a remark and give an example on this time regularity assumption.} (condition (A3) in \cite{alos:leon:nual:99}).

From the first part of this paper, we know that $\Gamma$ and its spatial derivative admit Aronson's type upper bounds and we extend these bounds to the Malliavin derivatives of $\Gamma$, again without regularity assumption on $\rma$ w.r.t. $t$ (see Theorem \ref{thm:malliavin_deriv_fund_sol} and, in the diffusion case, Corollary \ref{coro:aronson_estim_diff_case}).

Finally, since our noise is a one parameter Brownian motion, we also want to obtain a regular mild solution $v$ on $\R^d \times (0,T)$ in the sense of Definition \ref{def:weak_sol} of Equation \eqref{eq:SPDE}. Compared to \cite{alos:leon:nual:99}, since we have no space noise, we do not impose any condition on the dimension $d$ and our solution is derivable w.r.t. $x$ (see Theorem \ref{thm:reg_mild_sol} and Corollary \ref{coro:reg_mild_sol}).

In a recent paper paper \cite{pasc:pesc:18} a similar subject is handled with a parametrix construction. However, since the studied operator is not in the divergence form, the authors have to impose more regularity assumptions on the diffusion matrix $\rma$. Also, the SPDEs investigated in this paper are rearranged in such a way that the anticipating stochastic calculus can be avoided.

\bigskip
The paper is organized as follows. In Section \ref{sect:deriv_fund_sol} we  consider the generic heat equation \eqref{eq:PDE_fund_sol} and its fundamental solution $\Gamma$. We prove that the spatial derivatives of $\Gamma$ admit an Aronson's type upper bound, without any time regularity condition on $\rma$. Our result is presented in Theorem \ref{thm:general_result_Aronson_estim}.

In the next section \ref{sect:malliavin_deriv_fund_sol}, we assume that the matrix $\rma$ is random. Using the arguments developed in the previous section, we a number of estimates for the Malliavin derivative of $\Gamma$ and of
$\nabla \Gamma$, see Theorem \ref{thm:malliavin_deriv_fund_sol}. We also study the particular case where the randomness
is given by the solution of a SDE (diffusion case, section \ref{ssect:diff_case}).

In Section \ref{sect:mild_sol} we construct a mild solution $v$ of the SPDE in \eqref{eq:SPDE}. Here we use anticipating calculus and the properties of the fundamental solution $\Gamma$ of a parabolic equation with random coefficients. In the first part of this section we provide our assumptions and formulate the main result concerning a mild solution (Theorem \ref{thm:reg_mild_sol} and Corollary \ref{coro:reg_mild_sol} in the diffusion case). Section \ref{ssect:mild_sol_cons} is devoted to the proof of those results.

\section{Estimate for the spatial derivative of the fundamental solution} \label{sect:deriv_fund_sol}

Our goal here is to obtain an upper bound for the derivative of the fundamental solution $\Gamma$ for the PDE \eqref{eq:PDE_fund_sol}. On the matrix $\rma : \R^d \times [0,+\infty) \to \R^{d\times d}$ we impose the following conditions.
\begin{enumerate}[label=\textbf{(H\arabic*)}]
\item \label{H1} Uniform ellipticity. For any $(t,x,\zeta) \in \R_+\times \R^d \times \R^d$
$$
\lambda^{-1} |\zeta|^2\leq \rma(x,t)\zeta\cdot\zeta\leq \lambda |\zeta|^2.
$$
\item \label{H2} The matrix $\rma$ is measurable on $\R^d \times \R_+$, and for any $t \geq 0$ the function $\rma(\cdot,t)$ is of class $C^1$ w.r.t. $x \in \R^d$. Moreover,  there is a constant $K_\rma$ such that for all $t$ and $x$
$$
|\nabla \rma(x,t) |  \leq K_\rma.
$$
\end{enumerate}
We denote by $\cL$ the operator: $\cL = \mathrm{div} \Big[\rma \Big(x,t\Big)\nabla \Big]$,
then \eqref{eq:PDE_fund_sol} can be written:
\begin{equation*}
\frac{\partial u}{\partial t} \displaystyle(x,t)=\cL u (x,t).
\end{equation*}
It is well known (see among other \cite{aron:68} or \cite{eide:zhit:98}) that under condition \ref{H1} there exist two constants $\varsigma>0$ and $\varpi>0$ depending only on the constant $\lambda$ in Assumption \ref{H1} and the dimension $d$, such that
\begin{equation}\label{eq:aronson_estim}
0\leq \Gamma(x,t,y,s)  \leq  g_{\varsigma,\varpi}(x-y,t-s);
\end{equation}
here and in what follows, for two positive constants $c$ and $C$, the function $g_{c,C}(x,t)$ is defined by
$$
\textstyle
g_{c,C}(x,t) = c\, t^{-\frac{d}{2}}  \exp \left(-\frac{C|x|^2}{t}  \right),\qquad t> 0,\ \ x \in \R^d.
$$
Inequality \eqref{eq:aronson_estim} is called the {\bf Aronson estimate}\footnote{The function $\Gamma$ has a lower bound similar to the upper bound (see \cite[Theorem 7]{aron:68})}.  Our first result reads.
\begin{thm} \label{thm:general_result_Aronson_estim}
If the matrix $\rma=\rma(x,t)$ \ satisfies the uniform ellipticity condition {\rm \ref{H1}} and the regularity condition {\rm \ref{H2}}, then the (weak) fundamental solution $\Gamma$ of equation \eqref{eq:PDE_fund_sol}
admits the following estimate: there exist two constants $\varrho>0$ and $\varpi>0$ such that
\begin{equation}\label{eq:aronson_estim_der}
| \nabla_x \Gamma(x,t,y,s) | \leq  \frac{1}{\ds \sqrt{t-s}} \ g_{\varrho,\varpi}(x-y,t-s);
\end{equation}
here $\varpi$ depends only on the uniform ellipticity constant $\lambda$ and the dimension $d$, while $\varrho$ might
also depend on $K_\rma$ and on $T$.
\end{thm}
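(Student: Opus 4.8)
The plan is to obtain the gradient bound \eqref{eq:aronson_estim_der} by combining the Aronson estimate \eqref{eq:aronson_estim} with energy/Caccioppoli-type arguments localized at the natural parabolic scale, and to handle the lack of time regularity of $\rma$ by working only with spatial derivatives. First I would reduce to the case $s=0$ by translation in time (the coefficients need not be translated, only reindexed in $t$, which is harmless since no regularity in $t$ is used), and, exploiting parabolic scaling $x \mapsto \sqrt{t}\,x$, $t\mapsto 1$, reduce the estimate to a bound of the form $|\nabla_x \Gamma| \le C$ on a unit time interval with a Gaussian weight, where under scaling the coefficient $K_\rma$ gets multiplied by $\sqrt{t}$, hence the dependence of $\varrho$ on $T$. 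Then fix a point $(x_0,t_0)$ with $t_0\in(0,1]$ and consider the function $w(x,t)=\Gamma(x,t,y,0)$ which solves $\partial_t w = \mathrm{div}[\rma\nabla w]$ on $\mathbb R^d\times(0,1]$.

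The heart of the matter is a local gradient estimate for weak solutions of $\partial_t w=\mathrm{div}[\rma\nabla w]$ with merely bounded measurable-in-time, Lipschitz-in-$x$ coefficients: on a parabolic cylinder $Q_{r}=B_r(x_0)\times(t_0-r^2,t_0)$ one has
\begin{equation*}
\sup_{Q_{r/2}} |\nabla w|^2 \;\le\; \frac{C}{r^{2}}\,\frac{1}{|Q_r|}\int_{Q_r} |w|^2,
\end{equation*}
with $C$ depending only on $\lambda$, $d$ and $r K_\rma$. I would prove this in two steps. Step one: differentiating the equation in $x_k$ formally, $\nabla w$ itself solves a parabolic system $\partial_t(\partial_k w)=\mathrm{div}[\rma\nabla \partial_k w]+\mathrm{div}[(\partial_k\rma)\nabla w]$, which has bounded measurable coefficients and an $L^2$ (in fact better, by Caccioppoli applied to $w$) right-hand side in divergence form; a Caccioppoli inequality for this system on nested cylinders bounds $\int_{Q_{r/2}}|\nabla^2 w|^2$ in terms of $r^{-2}\int_{Q_r}|\nabla w|^2 + K_\rma^2 \int_{Q_r}|\nabla w|^2$, and an iteration (or direct use of the De Giorgi–Nash–Moser $L^\infty$ bound for the scalar equation $\partial_t w=\mathrm{div}[\rma\nabla w]$ to control $\sup|w|$, then Caccioppoli to control $\int|\nabla w|^2$) yields the stated $\sup$–$L^2$ gradient estimate. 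The key point making this work without time regularity is that all these arguments — Caccioppoli, Moser iteration in space, the De Giorgi–Nash–Moser $L^\infty$ and Harnack estimates — are insensitive to the time-regularity of $\rma$; they use only \ref{H1}, and the extra term requires only $\|\nabla_x\rma\|_\infty\le K_\rma$.

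Finally I would patch the local estimate with the global Aronson bound. Applying the local gradient estimate at $(x_0,t_0)$ with $r=\tfrac12\sqrt{t_0}$ gives
\begin{equation*}
|\nabla_x \Gamma(x_0,t_0,y,0)|^2 \;\le\; \frac{C}{t_0}\,\frac{1}{|Q_r|}\int_{Q_r}\Gamma(x,t,y,0)^2\,dx\,dt,
\end{equation*}
and on $Q_r$ one has $t\in[\tfrac34 t_0, t_0]$ and $|x-y|\ge |x_0-y|-\tfrac12\sqrt{t_0}$, so by \eqref{eq:aronson_estim} the integrand is at most $g_{\varsigma,\varpi}(x-y,t)^2 \le C\, t_0^{-d}\exp(-c|x_0-y|^2/t_0)$ (absorbing the square root shift into a slightly smaller Gaussian constant, valid because $\sqrt{t_0}$ is small compared to $|x_0-y|$ in the relevant regime, and trivially otherwise). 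Since $|Q_r|\sim t_0^{(d+2)/2}$, the right-hand side becomes $C\, t_0^{-1}\,t_0^{-d}\exp(-c|x_0-y|^2/t_0)$, i.e. $|\nabla_x\Gamma(x_0,t_0,y,0)|\le C t_0^{-1/2} t_0^{-d/2}\exp(-\tfrac{c}{2}|x_0-y|^2/t_0)$, which is exactly \eqref{eq:aronson_estim_der} with $s=0$; translating back in time gives the general case.

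I expect the main obstacle to be the local gradient estimate itself: one must justify differentiating the equation when $\rma$ is only Lipschitz in $x$ and merely measurable in $t$ (so $\nabla w$ lives only in $L^2$ a priori and the difference-quotient argument must be carried out carefully in the weak formulation), and one must keep precise track of how the constant depends on $r K_\rma$ through the iteration so that, after undoing the scaling, it is a function of $K_\rma\sqrt{T}$ only. A secondary technical point is the shift in the Gaussian exponent when passing from $|x-y|$ on the cylinder to $|x_0-y|$, which is handled by the elementary inequality $\exp(-c|x-y|^2/t)\le C_\delta \exp(-c(1-\delta)|x_0-y|^2/t)$ valid on $Q_r$ with $r\le\tfrac12\sqrt{t_0}$, at the cost of slightly enlarging $\varrho$.
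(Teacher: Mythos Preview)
Your approach is sound but takes a genuinely different route from the paper. The paper uses the parametrix (Levi) method: freezing the spatial variable gives an explicit Gaussian kernel $Z(x-y,t,s,y)$ via Fourier transform, and $\Gamma$ is represented as $Z + \int\!\int Z\,\Phi$, where $\Phi$ solves a Volterra integral equation with kernel $\cK(x,t,y,s) = \mathrm{div}\big[(\rma(x,t)-\rma(y,t))\nabla_x Z\big]$. The crucial observation is that $\cK$ involves only the spatial increment $\rma(x,t)-\rma(y,t)$ at the \emph{same} time $t$, so \ref{H2} alone (and no time regularity) yields $|\cK| \le (t-s)^{-1/2} g_{\varrho,\varpi}(x-y,t-s)$; convergence of the iterated-kernel series for $\Phi$ and then estimate \eqref{eq:aronson_estim_der} follow by differentiating the representation termwise. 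Your energy route --- an interior gradient estimate combined with the Aronson bound \eqref{eq:aronson_estim} --- is classical in spirit and makes transparent why $\varpi$ depends only on $\lambda,d$ (it is inherited directly from \eqref{eq:aronson_estim}). The main trade-off is that the parametrix formula is reused verbatim in Section~\ref{sect:malliavin_deriv_fund_sol} to compute the Malliavin derivative of $\Gamma$ term by term, something your approach does not supply. One caution: the local estimate $\sup_{Q_{r/2}}|\nabla w|^2 \le C r^{-2}|Q_r|^{-1}\int_{Q_r}|w|^2$ is correct under \ref{H1}--\ref{H2}, but after differentiating in $x_k$ the forcing $\mathrm{div}[(\partial_k\rma)\nabla w]$ has only the $L^2$ integrability of $\nabla w$ itself, so a single application of De Giorgi--Nash--Moser does not give $L^\infty$; you need a genuine bootstrap (Caccioppoli for $\nabla w$, parabolic Sobolev embedding, iterate) or a Moser iteration on the coupled system for $\nabla w$, and that is where the dependence on $rK_\rma$ enters. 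You rightly flag this as the main obstacle; it is standard but should be written out.
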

Weak fundamental solution is defined in \cite[Definition VI.6]{eide:zhit:98}. Let us emphasize that these estimates are coherent with \cite[Theorem VI.4]{eide:zhit:98}. The novelty is that the regularity of $\rma$ w.r.t. $t$ is not required.
The rest of this section is devoted to the proof of this theorem.

\subsection{When $\rma$ does not depend on $x$.}

First assume that $\rma$ just depends on $t$. In this case the fundamental solution $\Gamma$ is denoted by $Z$ and is given by the formula: for any $s<t$ and $(x,y)\in (\R^d)^2$
\begin{equation}\label{eq:fourier_transf_fund_sol}
Z(x-y,t,s) = \frac{1}{(2\pi)^{d/2}} \int_{\R^d} e^{i\zeta (x-y)} V(t,s,\zeta) d\zeta,
\end{equation}
where $V$ is the following function:
$$V(t,s,\zeta) = \exp \left( - \left\langle \int_s^t \rma(u) du\  \zeta,\zeta \right\rangle \right).$$
Due to Condition \ref{H1} the matrix $\rma$ verifies the estimates
$$\lambda^{-1} (t-s) |\zeta|^2 \leq \left\langle \int_s^t \rma(u) du\  \zeta,\zeta \right\rangle  \leq \lambda (t-s) |\zeta|^2.$$
From the above expression for $Z$, we deduce that for any $k\geq 1$ and $1 \leq j\big._\ell \leq d$ with $1\leq \ell \leq k$
\begin{eqnarray*}
\partial^k_{x_{j_1} \ldots x_{j_k} } Z(x-y,t,s) & = &  \frac{(i)^{k} }{(2\pi)^{d/2}} \int_{\R^d} e^{i\zeta (x-y)} V(t,s,\zeta) (\zeta_{j_1} \ldots  \zeta_{j_k}) d\zeta.
\end{eqnarray*}
As in \cite{frie:64}, Chapter 9, Theorem 1, we obtain that:
\begin{equation} \label{eq:aronson_deriv_parametrix}
|\partial^k_{x_{j_1} \ldots x_{j_k} }  Z(x-y,t,s)| \leq \frac{1}{(t-s)^{k/2}} \ g_{\varsigma,\varpi}(x-y,t-s).
\end{equation}
In particular the Aronson estimates \eqref{eq:aronson_estim} and \eqref{eq:aronson_estim_der} can be derived.

Now we define the parametrix, also denoted by $Z$, as the fundamental solution of \eqref{eq:PDE_fund_sol} for $\rma(\mathfrak z,t)$ where $\mathfrak z \in \R^d$ is a fixed parameter:
\begin{equation*}
\frac{\partial u}{\partial t} \displaystyle(x,t)=\mathrm{div} \Big[\rma(\mathfrak z,t)\nabla u(x,t)\Big].
\end{equation*}
We have again the representation
\begin{equation} \label{eq:repre_parametrix}
\forall s \leq t, \quad Z( x-y,t,s,\mathfrak z) = \frac{1}{(2\pi)^{d/2}} \int_{\R^d} e^{i\zeta (x-y)} V(t,s,\zeta,\mathfrak z) d\zeta,
\end{equation}
with
$$V(t,s,\zeta,\mathfrak z) = \exp \left( - \left\langle \int_s^t \rma(\mathfrak z,u) du\  \zeta,\zeta \right\rangle \right).$$
The above arguments give Estimates \eqref{eq:aronson_estim}  and \eqref{eq:aronson_estim_der}.
The following statement is equivalent to Lemma 5 in \cite{frie:64}, Chapter 9, Section 3 (see also \cite[Theorem I.3.2]{frie:64}).

In the next section, we use the parametrix method to construct $\Gamma$ when $\rma$  depends on both $x$ and $t$. The following technical result is used several times.
\begin{lemma} \label{lem:improper_integrals}
Suppose that $f$ is a measurable function on $\R^d \times [0,+\infty)$ that satisfies the estimate
$$
|f(x,t)|\leq k \exp(\mathfrak k |x|^2)
$$
for some constants $k$ and $\mathfrak k < \varpi /T$. Then the integral
$$F(x,t) = \int_0^t \left( \int_{\R^d} Z( x-\zeta,t,s,\zeta) f(\zeta,s) d\zeta \right) ds $$
is well defined for $0 \leq t \leq T$, continuous on $\R^d \times [0,T]$, and the derivative $\nabla_x F$ exists for $0 < t \leq T$ and
$$
\nabla_x F(x,t) = \int_0^t \left( \int_{\R^d} \nabla_x Z( x-\zeta,t,s,\zeta) f(\zeta,s) d\zeta \right) ds.
$$
\end{lemma}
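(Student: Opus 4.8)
The plan is to estimate the integrand using the Aronson bounds \eqref{eq:aronson_estim} and \eqref{eq:aronson_deriv_parametrix} for the parametrix $Z(\cdot,t,s,\mathfrak z)$ (which hold uniformly in the frozen parameter $\mathfrak z$, with constants depending only on $\lambda$, $d$ and $T$), combine them with the growth bound on $f$, and then invoke the dominated convergence theorem both to establish continuity of $F$ and to differentiate under the integral sign. First I would fix $0<t\le T$ and $x$ in a bounded neighbourhood, and write out the pointwise bound
$$
|Z(x-\zeta,t,s,\zeta)\, f(\zeta,s)| \le k\, g_{\varsigma,\varpi}(x-\zeta,t-s)\, \exp(\mathfrak k |\zeta|^2)
= k\,\varsigma\,(t-s)^{-d/2} \exp\!\left(-\tfrac{\varpi|x-\zeta|^2}{t-s} + \mathfrak k|\zeta|^2\right).
$$
The key elementary estimate is that, since $\mathfrak k < \varpi/T \le \varpi/(t-s)$, the quadratic form $-\varpi|x-\zeta|^2/(t-s) + \mathfrak k|\zeta|^2$ in $\zeta$ is still negative definite with a quadratic coefficient bounded away from $0$ uniformly as $s\uparrow t$; completing the square in $\zeta$ shows that the inner $\zeta$-integral is bounded by $C(t-s)^{-d/2}\cdot (t-s)^{d/2}\cdot e^{c|x|^2} = C e^{c|x|^2}$, i.e. by a constant locally uniform in $(x,t)$, with no singularity in $s$. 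Hence $F(x,t)$ is a well-defined absolutely convergent integral for $0\le t\le T$.

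For continuity I would use that $(x,t)\mapsto Z(x-\zeta,t,s,\zeta)$ is continuous for $s<t$ (inherited from the explicit Fourier representation \eqref{eq:repre_parametrix}), that the integrand is dominated — on a neighbourhood of any $(x_0,t_0)$ — by the $s$-integrable, $\zeta$-integrable majorant obtained above (choosing the neighbourhood small enough that $\mathfrak k$ stays below $\varpi/(t-s)$ up to $s=t_0$), and apply dominated convergence; the value $t=0$ gives $F(x,0)=0$ and continuity there follows since the majorant's integral over $s\in[0,t]$ tends to $0$ as $t\to 0$. For the differentiation claim, I would similarly bound $|\nabla_x Z(x-\zeta,t,s,\zeta)|$ by $(t-s)^{-1/2} g_{\varsigma,\varpi}(x-\zeta,t-s)$ via \eqref{eq:aronson_deriv_parametrix} with $k=1$; the extra factor $(t-s)^{-1/2}$ is still integrable in $s$ near $s=t$ (it is $L^p$ for $p<2$, in particular $L^1$), so after completing the square the dominating function is $C(x,t)(t-s)^{-1/2}e^{c|\zeta|^2 \text{ absorbed}}$, which is integrable on $\R^d\times[0,t]$, locally uniformly in $(x,t)$ bounded away from $s=0$ is not needed since the singularity is only at $s=t$ and is integrable. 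Then the theorem on differentiation under the integral sign applies on $0<t\le T$ and yields the stated formula for $\nabla_x F$.

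The main obstacle is purely the bookkeeping in the two-variable Gaussian estimate: one must check that the Gaussian weight coming from $Z$ genuinely dominates the growth of $f$ \emph{uniformly} as $s\to t$, which is exactly where the hypothesis $\mathfrak k < \varpi/T$ is used (a bound like $\mathfrak k < \varpi/t$ would degenerate). The clean way to organize this is a self-contained sublemma: for $0<\tau\le T$ and $\mathfrak k < \varpi/T$,
$$
\int_{\R^d} \tau^{-d/2}\exp\!\left(-\tfrac{\varpi|x-\zeta|^2}{\tau}\right)\exp(\mathfrak k|\zeta|^2)\,d\zeta \;\le\; C\exp(\kappa|x|^2),
$$
with $C,\kappa$ depending only on $d,\varpi,\mathfrak k,T$ (and the analogue with an extra $\tau^{-1/2}$, still giving an $s$-integrable singularity). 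Granting this sublemma, everything else is a routine application of dominated convergence and differentiation under the integral, so I expect the write-up to be short once the Gaussian computation is isolated.
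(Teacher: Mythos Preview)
Your proposal is correct and follows the standard route: Aronson-type bounds on $Z$ and $\nabla_x Z$, a Gaussian completion-of-the-square to absorb the $\exp(\mathfrak k|\zeta|^2)$ growth of $f$ under the hypothesis $\mathfrak k<\varpi/T$, and then dominated convergence for continuity and differentiation under the integral. The paper itself does not give a proof at all; it simply states that the argument is identical to that of Lemma~IX.5 in Friedman's book \cite{frie:64} (with further details in Chapter~I, Section~3 there), which is precisely the argument you have outlined. So your write-up is in fact more informative than the paper's, and the isolated Gaussian sublemma you propose is exactly the computation underlying Friedman's Lemma~I.3.2.
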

\begin{proof}
We skip the proof of this Lemma because  it is the same as the proof of Lemma IX.5 in \cite{frie:64} (see also \cite{frie:64}, Chapter 1, Section 3 for more details).
\end{proof}

\subsection{Parametrix method and the estimate on the gradient}

The parametrix method suggests to construct $\Gamma$ in the form
\begin{eqnarray} \nonumber
\Gamma(x,t,y,s) &= &Z(x-y,t,s,y) \\ \label{eq:def_fund_sol_param_method}
& +& \int_s^t \int_{\R^d} Z(x-\zeta,t,r,\zeta) \Phi(\zeta,r,y,s) d\zeta dr.
\end{eqnarray}
If the function $\Phi$ is measurable and satisfies a suitable growth condition, we can apply Lemma \ref{lem:improper_integrals}. Then $\Gamma$ is the fundamental solution if and only if
\begin{equation*}
 \Phi(x,t,y,s) = \cK(x,t,y,s) + \int_s^t \int_{\R^d} \cK(x,t,\zeta,r) \Phi(\zeta,r,y,s) d\zeta dr,
\end{equation*}
where
\begin{eqnarray*}
\cK(x,t,y,s) & = &\mathrm{div} \left[ \left( \rma\big(x,t\big)- \rma \big(y,t\big) \right) \nabla_x Z(x-y,t,s,y)\right] .
\end{eqnarray*}
Notice that in the expression $\rma\big(x,t\big) - \rma\big(y,t\big)$, the matrix is evaluated two times at the same time $t$.
Hence formally the function $\Phi$ is the sum of iterated kernels
\begin{equation} \label{eq:def_Phi_2}
\Phi(x,t,y,s)  = \sum_{m=1}^\infty \cK_m (x,t,y,s)
\end{equation}
with $\cK_m$ defined by
$$\cK_m (x,t,y,s) = \int_s^t \int_{\R^d} \cK(x,t,\zeta,r) \cK_{m-1}(\zeta,r,y,s) d\zeta dr.$$
Let us follow the scheme of \cite{frie:64} to obtain \eqref{eq:aronson_estim_der}.
Remark that continuity of $\rma$ w.r.t. $t$ is not assumed. We will use the following notations: $\rma_i$ is the $i$-th column of $\rma$, $\gamma$ is the vector-function such that
$$
\gamma_i(x,t) = \mathrm{div} (\rma_i(x,t)) = \sum_{j=1}^n  \frac{\partial \rma_{ji}}{\partial x_j} (x,t).
$$
Note that under \ref{H2}, $\gamma$ is bounded. The kernel $\cK$ satisfies:
\begin{eqnarray} \nonumber
\cK(x,t,y,s) & = & \sum_{i,j=1}^n (\rma_{ij}(x,t)-\rma_{ij}(y,t)) \frac{\partial^2 Z}{\partial x_i \partial x_j}(x-y,t,s,y) \\ \label{eq:kernel_first_step}
& + & \sum_{i=1}^n  \gamma_i(x,t) \frac{\partial Z}{\partial x_i } (x-y,t,s,y).
\end{eqnarray}
\begin{lemma} \label{lem:estim_phi_eps}
Under {\rm \ref{H1}} and {\rm \ref{H2}}, the series in \eqref{eq:def_Phi_2} converge. The sum $\Phi$ is measurable and satisfies the estimate
\begin{equation} \label{eq:estim_Phi_eps}
|\Phi(x,t,y,s)| \leq  \frac{1}{\sqrt{t-s}} g_{\varrho,\varpi}(x-y,t-s).
\end{equation}
The constants $\varrho$ and $\varpi$ depend on $\lambda$ and $d$, whereas $\varrho$ also depends on the Lispchitz constant $K_\rma$ 
and on $T$.
\end{lemma}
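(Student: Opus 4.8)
\textbf{Proof proposal for Lemma \ref{lem:estim_phi_eps}.}

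The plan is to iterate the pointwise bound on the single kernel $\cK$ and then sum the resulting geometric-type series, exactly in the spirit of Friedman's classical parametrix argument, but carefully tracking that no $t$-regularity of $\rma$ is ever used. First I would establish the estimate on the first iterate. Using \eqref{eq:kernel_first_step}, the Lipschitz bound $|\rma(x,t)-\rma(y,t)|\le K_\rma|x-y|$ from \ref{H2}, the bound $|\gamma|\le K_\rma d$, and the Gaussian bounds \eqref{eq:aronson_deriv_parametrix} for $\partial_x Z$ and $\partial^2_{xx}Z$ at the frozen point $\mathfrak z=y$, one gets
$$
|\cK(x,t,y,s)| \;\le\; K_\rma\Big(\frac{|x-y|}{t-s}\,g_{\varsigma,\varpi}(x-y,t-s) \;+\; \frac{1}{\sqrt{t-s}}\,g_{\varsigma,\varpi}(x-y,t-s)\Big).
$$
Since $|x-y|\,e^{-\varpi|x-y|^2/(t-s)} \le C\sqrt{t-s}\,e^{-\varpi'|x-y|^2/(t-s)}$ for any $\varpi'<\varpi$, the first term is absorbed into the second at the cost of slightly shrinking the exponential constant, so $|\cK(x,t,y,s)| \le C_1 (t-s)^{-1/2}\,g_{\varsigma,\varpi'}(x-y,t-s)$; equivalently $|\cK_1|\le C_1 (t-s)^{-1/2} g_{1,\varpi'}(x-y,t-s)$ up to renaming constants. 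The singularity exponent $1/2<1$ is the crucial point that makes the iteration converge.

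Next I would run the induction. Assume $|\cK_m(x,t,y,s)| \le C^m \dfrac{B_m}{(t-s)^{1-m/2}}\, g_{1,\varpi'}(x-y,t-s)$ for an appropriate sequence $B_m$ (this is the form as long as $m\le$ some bound; for larger $m$ the power of $(t-s)$ becomes nonnegative and one replaces it by a bounded power, which only helps). Plug this into the recursion $\cK_{m+1}=\int_s^t\!\!\int_{\R^d}\cK(x,t,\zeta,r)\cK_m(\zeta,r,y,s)\,d\zeta\,dr$. The spatial convolution of two Gaussians is handled by the Chapman--Kolmogorov type identity
$$
\int_{\R^d} g_{1,\varpi'}(x-\zeta,t-r)\,g_{1,\varpi'}(\zeta-y,r-s)\,d\zeta \;\le\; C\,g_{1,\varpi''}(x-y,t-s),
$$
valid with $\varpi''$ slightly smaller than $\varpi'$, and the time integral produces a Beta-function factor $\int_s^t (t-r)^{-1/2}(r-s)^{-1+m/2}\,dr = (t-s)^{-1/2+m/2} B(1/2,m/2)$. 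Tracking constants gives $B_{m+1}=B_m B(1/2,m/2)$, and since $B(1/2,m/2)=\Gamma(1/2)\Gamma(m/2)/\Gamma((m+1)/2)$ decays like $C\,m^{-1/2}$, the product $C^m B_m$ is dominated by $C^m \Gamma(1/2)^m/\Gamma(m/2)$, which is summable. Summing over $m\ge 1$, the leading $m=1$ term controls the singularity and yields $|\Phi(x,t,y,s)|\le \varrho\,(t-s)^{-1/2} g_{1,\varpi}(x-y,t-s)$ after relabelling $\varpi''\leadsto\varpi$ and collecting all constants into $\varrho$, which through $C_1$ and the Beta factors depends on $\lambda$, $d$, $K_\rma$ and (through the finitely many steps where $(t-s)\le T$ is used to bound nonnegative powers) on $T$; the exponential rate $\varpi$ depends only on $\lambda$ and $d$. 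Measurability of $\Phi$ follows because each $\cK_m$ is measurable (Fubini applies thanks to the absolute integrability just established) and the series converges uniformly on compact subsets of $\{s<t\}$, with the standard local-in-space control near the diagonal.

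The main obstacle is bookkeeping rather than conceptual: one must show the whole iteration goes through with constants that are uniform in the frozen parameter $\mathfrak z=y$ and do \emph{not} involve any modulus of continuity of $t\mapsto\rma(x,t)$. This is precisely where the divergence structure pays off — in $\cK$ the matrix is evaluated at the \emph{same} time $t$ in both slots, so the difference $\rma(x,t)-\rma(y,t)$ is purely a spatial increment controlled by \ref{H2}, and the only time dependence enters through $\int_s^t\rma(\mathfrak z,u)\,du$ inside $Z$, which is handled uniformly by the two-sided bound $\lambda^{-1}(t-s)|\zeta|^2\le\langle\int_s^t\rma\,du\,\zeta,\zeta\rangle\le\lambda(t-s)|\zeta|^2$ and hence by the frozen-coefficient Gaussian estimate \eqref{eq:aronson_deriv_parametrix}. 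A secondary technical point is the repeated infinitesimal shrinking of the exponential constant $\varpi'\!\leadsto\!\varpi''\!\leadsto\!\cdots$ at each convolution step; one fixes this at the outset by choosing a single $\varpi$ strictly smaller than the Aronson constant and noting that finitely many (indeed, a summable sequence of) such reductions can be absorbed, or by working with a fixed pair $(\varpi,2\varpi)$ and using $g_{1,2\varpi}\!*\!g_{1,2\varpi}\le C g_{1,\varpi}$.
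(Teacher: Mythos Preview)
Your proposal is correct and follows essentially the same route as the paper: bound $\cK$ by $(t-s)^{-1/2}$ times a Gaussian using \ref{H2} and \eqref{eq:aronson_deriv_parametrix}, then iterate via Gaussian spatial convolution and a Beta-function time integral to get $|\cK_m|\lesssim M^m/\Gamma(m/2)\,(t-s)^{m/2-1} g_{\varrho,\varpi}$, which is exactly the paper's bound $M^m/(1+m/2)!$ (their $(\cdot)!$ denotes the Gamma function). The only cosmetic difference is that the paper invokes Friedman's Lemma~IX.7 with a single $(1-\eta)$ shrinkage and then relabels, whereas you discuss the shrinkage issue explicitly; in fact, since both Gaussians share the same exponent $\varpi$, the convolution $g_{1,\varpi}(\cdot,\tau_1)*g_{1,\varpi}(\cdot,\tau_2)=(\pi/\varpi)^{d/2}g_{1,\varpi}(\cdot,\tau_1+\tau_2)$ preserves $\varpi$ exactly, so no repeated shrinking is needed and your second fix is unnecessary.
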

\begin{proof}
From estimate \eqref{eq:aronson_deriv_parametrix} considering  Lipschitz continuity of $\rma$, we obtain
\begin{eqnarray*}
\left| \cK(x,t,y,s) \right|& \leq & K_\rma    |x-y| \frac{1}{t-s} g_{\varsigma,\varpi}(x-y,t-s) \\[-1mm]
& +& K_\rma    \frac{1}{\sqrt{t-s}} g_{\varsigma,\varpi}(x-y,t-s) \\[-1mm]
& \leq & \frac{1}{\sqrt{t-s}} g_{\varrho,\varpi}(x-y,t-s).
\end{eqnarray*}
Again $\varsigma$, $\varpi$ or $\varrho$ may differ from line to line. Thus $\cK$ satisfies  inequality (4.6) of \cite{frie:64}, Chapter 9, Section 4.
Then the convergence of the series in \eqref{eq:def_Phi_2} can be proved by the same arguments. Indeed,
by Lemma IX.7 in \cite{frie:64} for any $\eta$, $0< \eta < 1$, there is a constant $M(\eta,\varpi)>0$ depending on $\eta$, $\varpi$ and  $d$ such that
\begin{eqnarray*}
&& \left| \cK_2(x,t,y,s) \right|  \leq  \int_s^t \int_{\R^d} \left|  \cK(x,t,\zeta,r) \right| \left| \cK(\zeta,r,y,s) \right| d\zeta dr \\
 && \quad \leq  \int_s^t  \int_{\R^d}  \frac{1}{\sqrt{(t-r)(r-s)} }\ g_{\varrho,\varpi}(x-\zeta,t-r)g_{\varrho,\varpi}(\zeta-y,r-s) d\zeta dr \\
 && \quad \leq \int_s^t   \frac{M(\eta,\varpi)\varrho^2 }{\sqrt{(t-r)(r-s)} }\frac{1}{\ds (t-s)^{\frac{d}{2}}}  \exp \left(-\varpi(1-\eta) \frac{|x-y|^2}{t-s}  \right)dr
\end{eqnarray*}
By direct computation (see also Lemma I.2 in \cite{frie:64})
$$
\int_s^t {\textstyle \frac{1}{\sqrt{(t-r)(r-s)} } dr} = \pi.
$$
Thereby there exist two constants $\varrho>0$ and $\varpi>0$ such that
$$
\left| \cK_2(x,t,y,s) \right| \leq  g_{\varrho,\varpi}(x-y,t-s).
$$
Iterating this computation we obtain by induction for $m\geq 2$:
$$\left| \cK_m(x,t,y,s) \right| \leq \frac{M^m}{(1+m/2)!} (t-s)^{m/2-1} g_{\varrho,\varpi}(x-y,t-s)$$
where $M$ is a constant depending on $\varrho$ and $\varpi$, and the symbol $(\cdot)!$ stands for the gamma function (see the proof of Theorem IX.2 in \cite{frie:64} for the details). The convergence of the series and  estimate \eqref{eq:estim_Phi_eps} can be then deduced. Namely,
\begin{eqnarray*}
|\Phi(x,t,y,s)| &\leq & \frac{1}{\sqrt{t-s}} g_{\varrho,\varpi}(x-y,t-s) \Big[  \sum_{m\geq 1}  \frac{M^m}{(1+m/2)!} (t-s)^{(m-1)/2} \Big]\\
& = &  \frac{1}{\sqrt{t-s}} g_{\varrho,\varpi}(x-y,t-s) \Theta(t-s).
\end{eqnarray*}
For $t$ and $s$ in $[0,T]$, we get: $\Theta(t-s) \leq \Theta(T)$.
\end{proof}

Using Lemma \ref{lem:improper_integrals}, we deduce that $\Gamma$ is well-defined, and inequality \eqref{eq:aronson_estim_der} follows from the formula
\begin{eqnarray} \nonumber
\partial_{x_j} \Gamma(x,t,y,s) 	& = & \partial_{x_j}  Z(x-y,t,s,y) \\ \label{eq:def_fund_sol_param_method_deriv}
&+ &\int_s^t \int_{\R^d} \partial_{x_j}  Z(x-\zeta,t,r,\zeta) \Phi(\zeta,r,y,s) d\zeta dr,
\end{eqnarray}
together with estimate \eqref{eq:aronson_deriv_parametrix} on $Z$ and \eqref{eq:estim_Phi_eps} on $\Phi$. We underline that only the properties \ref{H1} and \ref{H2} of $\rma$ are required to obtain \eqref{eq:aronson_estim_der}. This completes the proof of Theorem \ref{thm:general_result_Aronson_estim}.

\section{Malliavin derivative of the fundamental solution}\label{sect:malliavin_deriv_fund_sol}

From now on we suppose that $\rma=\rma(x,t)$ are random fields defined on a probability space $(\Omega,\cF,\Pro)$ that carries a $d$-dimensional Brownian motion $B$ and that the filtration $\mathbb F = (\cF_t, \ t \geq 0)$ is generated by $B$, augmented with the $\Pro$-null sets. The matrix $\rma : \R^d \times [0,+\infty) \times \Omega \to \R^{d\times d}$ depends\footnote{Note that here and in the sequel we follow the usual convention and omit the function argument $\omega$.} also on $\omega$ and we assume that conditions \ref{H1} and \ref{H2} are fulfilled uniformly w.r.t. $\omega$. In particular the ellipticity constant $\lambda$ and the bound $K_\rma$ do not depend on $\omega$. Since \ref{H1} and \ref{H2} hold, by Theorem \ref{thm:general_result_Aronson_estim} the fundamental solution $\Gamma$ of \eqref{eq:PDE_fund_sol} and its spatial derivatives satisfy  estimates \eqref{eq:aronson_estim} and \eqref{eq:aronson_estim_der}.

In order to define properly the stochastic integral in \eqref{eq:mild_sol}, we will use the approach developed in \cite{nual:pard:88} for anticipating integrals and thus Malliavin's derivatives. In what follows we borrow some notations from Nualart \cite{nual:06}.
Recall that $B$ is a $d$-dimensional Brownian motion. Let $f$ be an element of $C^\infty_{\tt p}(\R^{dn})$ (the set of all infinitely many times continuously differentiable functions such that these functions and all  their partial derivatives have at most polynomial growth at infinity) with
$$
f(x)= f(x^{1}_1,\ldots,x^{d}_1;\ldots;x^{1}_n,\ldots,x^{d}_n).
$$
We define a smooth random variable $F$ by:
$$
F = f(B(t_1),\ldots,B(t_n))
$$
for $0\leq t_1 < t_2 < \ldots < t_n \leq T$. The class of smooth random variables is denoted by $\mathcal{S}$. Then the Malliavin derivative $D_t F$ is given by
$$D^j_t(F) = \sum_{i=1}^d \frac{\partial f}{\partial x^j_i} (B(t_1),\ldots,B(t_n)) \mathbf{1}_{[0,t_i]}(t)$$
(see Definition 1.2.1 in \cite{nual:06}). $D_t(F)$ is the $d$-dimensional vector $D_t(F) =(D^j_t(F),\ j=1,\ldots,d)$. Moreover, this derivative $D_t(F)$ is a random variable with values in the Hilbert space $L^2([0,T];\R^d)$. The space $\bD^{1,p}$, $p\geq 1$, is the closure of the class of smooth random variables with respect to the norm
$$\|F\|_{1,p} = \left[ \mE (|F|^p) + \mE \left( \|DF\|_{L^2([0,T];\R^d)}^p \right) \right]^{1/p}.$$
For $p=2$, $\bD^{1,2}$ is a Hilbert space. Then by induction we can define $\bD^{k,p}$ the space of $k$-times differentiable random variables where the $k$ derivatives are in $L^p(\Omega)$. Finally
$$\bD^{k,\infty} = \bigcap_{p\geq 1} \bD^{k,p}, \quad \bD^{\infty} = \bigcap_{k\in \mathbb{N}} \bD^{k,\infty}.$$

For the Malliavin differentiability property of $\Gamma$, we use the approach developed in Al\`os et al. \cite{alos:leon:nual:99}.
We assume that, in addition to \ref{H1} and \ref{H2},  the matrix $\rma$ possesses the following properties:
\begin{enumerate}[label=\textbf{(H\arabic*)}]
\setcounter{enumi}{2}
\item \label{H3} For each $(x,t) \in \R^d \times [0,+\infty)$, $\rma (x,t)$ is a $\cF_t$-measurable random variable.
\item \label{H4} For each $(x,t) \in \R^d \times [0,+\infty)$ the random variable $\rma (x,t)$ belongs to $\bD^{1,2}$.
\item \label{H5}  There exists a non negative process $\psi$ such that for any $t \in [0,T]$ and any $x \in \R^d$,
$$
|D_r \rma(x,t)| + |D_r \nabla \rma(x,t)| \leq \psi(r).
$$
Moreover, $\psi$ satisfies the integrability condition: for some $p>1$
$$
\mE \left( \int_0^T \psi(r)^{2p} dr \right)  < +\infty.
$$
\end{enumerate}
Note that if \ref{H5} holds, then for all $(x,x',t) \in \R^d \times \R^d \times \R_+$
$$
|D_r \rma(x,t) - D_r \rma(x',t)| \leq \psi(r) |x-x'|.
$$
Indeed
$$\rma_{ij}(x,t) - \rma_{ij}(x',t) = \int_0^1 \nabla \rma_{ij}(x'+ \theta(x-x'),t)  d\theta(x-x').$$
We differentiate both sides in the Malliavin sense and we use the estimate on $D_r \nabla \rma$. Our second main result is
\begin{thm} \label{thm:malliavin_deriv_fund_sol}
Under conditions {\rm \ref{H1}--\ref{H5}}, the fundamental solution $\Gamma$ of \eqref{eq:PDE_fund_sol} and its spatial derivatives belong to $\bD^{1,2}$ for every $(t,s)\in[0,T]^2$, $s< t$ and $(x,y)\in (\R^n)^2$. Moreover, there exist two constants $\varrho$ and $\varpi$ that depend only on the uniform ellipticity constant $\lambda$, the dimension $d$, on $K_\rma$ and on $T$, such that
\begin{equation} \label{eq:estim_mall_der_Gamma_bis}
|D_r \Gamma(x,t,y,s) | \leq \psi(r) g_{\varrho,\varpi}(x-y,t-s),
\end{equation}
and
\begin{equation}\label{eq:estim_mall_space_der_Gamma_bis}
|D_r \nabla_x \Gamma(x,t,y,s) | \leq \frac{\psi(r)}{\sqrt{t-s}} g_{\varrho,\varpi}(x-y,t-s) .
\end{equation}
The quantity $\psi$ is defined by \eqref{eq:def_mall_deriv_xi_eps}. Finally $\Gamma$ and $D_r \Gamma$ are continuous w.r.t. $(x,y) \in \R^{2d}$ and $0\leq s < t \leq T$.
\end{thm}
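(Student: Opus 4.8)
The plan is to Malliavin-differentiate the parametrix representation \eqref{eq:def_fund_sol_param_method} of $\Gamma$ term by term, relying throughout on the closedness of the operator $D$ on $\bD^{1,2}$ and on the fact that the pointwise bound $|D_r\rma|+|D_r\nabla\rma|\le\psi(r)$ from \ref{H5}, together with the integrability $\mE\int_0^T\psi(r)^{2p}\,dr<\infty$ with $p>1$, provides a dominating function in $L^2(\Omega\times[0,T])$. Granting that, everything reduces to the Aronson-type bookkeeping already carried out in the proofs of Lemma \ref{lem:estim_phi_eps} and Theorem \ref{thm:general_result_Aronson_estim}, with one extra factor $\psi(r)$ produced each time $D_r$ lands on a coefficient. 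First I would treat the frozen kernel $Z(x-y,t,s,\mathfrak z)$ from \eqref{eq:repre_parametrix}: it depends on $\rma(\mathfrak z,\cdot)$ only through the matrix $A=\int_s^t\rma(\mathfrak z,u)\,du$, which belongs to $\bD^{1,2}$ (a Bochner integral of the $\bD^{1,2}$-valued, $\cF_u$-measurable map $u\mapsto\rma(\mathfrak z,u)$, integrable by \ref{H5}) and which by \ref{H1} takes values in a fixed compact set of matrices on which $A\mapsto Z$ is smooth with $\partial_{A_{ij}}Z$ proportional to $\partial^2_{x_ix_j}Z$. The chain rule (after extending the smooth map off the compact set to a globally Lipschitz one) then gives $Z(x-y,t,s,\mathfrak z)\in\bD^{1,2}$ with
$$
D_r Z(x-y,t,s,\mathfrak z)=-\sum_{i,j}\partial^2_{x_ix_j}Z(x-y,t,s,\mathfrak z)\int_s^t D_r\rma_{ij}(\mathfrak z,u)\,du,
$$
and likewise for $\partial^k_x Z$. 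Since $\big|\int_s^t D_r\rma_{ij}(\mathfrak z,u)\,du\big|\le(t-s)\psi(r)$, estimate \eqref{eq:aronson_deriv_parametrix} yields $|D_r\partial^k_x Z(x-y,t,s,\mathfrak z)|\le\psi(r)(t-s)^{-k/2}g_{\varsigma,\varpi}(x-y,t-s)$; in particular each $\partial^k_x Z(x-\cdot,t,s,\cdot)$ is a \emph{bounded} element of $\bD^{1,2}$.

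\textbf{Step on the kernels $\cK$ and $\cK_m$.} Next I would differentiate \eqref{eq:kernel_first_step} by the product rule. For fixed $x,y$ the factors $\rma_{ij}(x,t)-\rma_{ij}(y,t)$ and $\gamma_i(x,t)$ are bounded (by $K_\rma|x-y|$ and by a multiple of $K_\rma$), so $\cK(x,t,y,s)\in\bD^{1,2}$, and combining the previous step with $|D_r(\rma_{ij}(x,t)-\rma_{ij}(y,t))|\le\psi(r)|x-y|$, $|D_r\gamma_i(x,t)|\le\psi(r)$, and the elementary inequality $|x-y|(t-s)^{-1/2}e^{-\varpi|x-y|^2/(t-s)}\le C\,e^{-\varpi'|x-y|^2/(t-s)}$, one gets
$$
|D_r\cK(x,t,y,s)|\le\frac{\psi(r)}{\sqrt{t-s}}\,g_{\varrho,\varpi}(x-y,t-s),
$$
the same bound as for $\cK$ itself times $\psi(r)$. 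Differentiating the recursion $\cK_m=\int_s^t\int_{\R^d}\cK(x,t,\zeta,\rho)\cK_{m-1}(\zeta,\rho,y,s)\,d\zeta\,d\rho$ (commuting $D$ with the integral, again justified by domination through $\psi$), and using $|\cK|\le(t-s)^{-1/2}g_{\varrho,\varpi}$ and the bound on $|\cK_{m-1}|$ from the proof of Lemma \ref{lem:estim_phi_eps}, a Beta-function computation identical to the one there shows simultaneously, by induction, that $\cK_m\in\bD^{1,2}$ and
$$
|D_r\cK_m(x,t,y,s)|\le m\,\psi(r)\,\frac{M^m}{(1+m/2)!}\,(t-s)^{m/2-1}\,g_{\varrho,\varpi}(x-y,t-s).
$$
Since $\sum_m m\,M^m(t-s)^{(m-1)/2}/(1+m/2)!$ converges (the gamma factor dominates), the series $\sum_m D_r\cK_m$ converges in $L^2(\Omega\times[0,T])$; closedness of $D$ gives $\Phi\in\bD^{1,2}$ with $|D_r\Phi(x,t,y,s)|\le\psi(r)(t-s)^{-1/2}g_{\varrho,\varpi}(x-y,t-s)$ on $0\le t-s\le T$.

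\textbf{Assembling $\Gamma$ and continuity.} Finally I would differentiate \eqref{eq:def_fund_sol_param_method} and \eqref{eq:def_fund_sol_param_method_deriv}, moving $D_r$ inside the space--time integral (domination by $\psi$ plus Lemma \ref{lem:improper_integrals}) and applying the product rule, to obtain
$$
D_r\Gamma=D_r Z(x-y,t,s,y)+\int_s^t\!\!\int_{\R^d}\!\big[D_r Z(x-\zeta,t,\rho,\zeta)\,\Phi+Z(x-\zeta,t,\rho,\zeta)\,D_r\Phi\big]\,d\zeta\,d\rho,
$$
and the analogous identity with $\partial_{x_j}Z$ in place of $Z$. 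The Gaussian convolution estimates together with $\int_s^t(\rho-s)^{-1/2}\,d\rho\le2\sqrt T$ and $\int_s^t[(t-\rho)(\rho-s)]^{-1/2}\,d\rho=\pi$ then give \eqref{eq:estim_mall_der_Gamma_bis} and \eqref{eq:estim_mall_space_der_Gamma_bis}, absorbing the resulting $\sqrt T$ and $\pi$ into $\varrho$ and using $t-s\le T$ to turn a bare $g_{\varrho,\varpi}$ into $(t-s)^{-1/2}g_{\varrho,\varpi}$ where needed. Continuity of $\Gamma$ and $D_r\Gamma$ in $(x,y,s,t)$ follows from these representations by dominated convergence, the Aronson bounds serving as dominating functions, once one notes that $t\mapsto\int_s^t\rma(\mathfrak z,u)\,du$ is Lipschitz continuous and $\mathfrak z\mapsto\rma(\mathfrak z,u)$ is $C^1$, so that $Z$ and its $x$- and $\mathfrak z$-derivatives depend continuously on all their arguments in spite of the mere measurability of $\rma$ in $t$.

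\textbf{Main obstacle.} The genuinely delicate point is not the Aronson bookkeeping, which repeats Lemma \ref{lem:estim_phi_eps} almost verbatim, but the justification that $D$ commutes with the iterated space--time integrals and with the infinite sum $\sum_m\cK_m$: one must verify at each stage that the objects involved lie in $\bD^{1,2}$, that the relevant approximants converge in the $\|\cdot\|_{1,2}$-norm, and that the bounds coming from $\psi$ are integrable enough to invoke dominated convergence in $L^2(\Omega\times[0,T])$ and closedness of $D$ — which is exactly why the integrability $\mE\int_0^T\psi(r)^{2p}\,dr<\infty$ with $p>1$ is built into \ref{H5}.
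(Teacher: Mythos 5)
Your proposal is correct and follows essentially the same route as the paper: Malliavin-differentiate the Fourier representation of the frozen kernel $Z$ (your chain-rule argument in the matrix $\int_s^t\rma(\mathfrak z,u)\,du$ yields the same identity $D_rZ=\mathrm{Trace}\big[\big(\int_s^tD_r\rma\,du\big)\partial_x^2Z\big]$ as the paper's direct computation, up to an immaterial sign), then bound $D_r\cK$, iterate to $D_r\cK_m$ and $D_r\Phi$ exactly as in Lemma \ref{lem:estim_phi_eps}, and assemble $D_r\Gamma$ and $D_r\nabla_x\Gamma$ from \eqref{eq:def_fund_sol_param_method} and \eqref{eq:def_fund_sol_param_method_deriv} via the Gaussian convolution estimates. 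The extra factor $m$ in your bound on $D_r\cK_m$ is harmless, and your closing remarks on closability and domination by $\psi$ match the role of \ref{H5} in the paper's argument.
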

Let us emphasize that the constant $\varpi$ depends only on the uniform ellipticity constant $\lambda$  and the dimension $d$, whereas the constant $\varrho$ also depends on $K_\rma$ and $T$.
%

\subsection{Proof of Theorem \ref{thm:malliavin_deriv_fund_sol}}

Let us remark that the construction of $\Gamma$ in Section \ref{sect:deriv_fund_sol} applies pathwise, $\omega$ by $\omega$. We want to prove now that in the framework of this section $ \Gamma$ is also Malliavin differentiable. As a straightforward consequence of \ref{H3} one obtains that for any $s<t$, the random variables $Z(x-y,t,s)$, $\Phi(x,t,y,s)$ and $ \Gamma$ are $\cF_{t}$-measurable.

Let us first assume that $\rma$ does not depend on $x$ and consider the Malliavin derivative of $Z$. From the representation \eqref{eq:fourier_transf_fund_sol}, this derivative can be computed explicitly: for $j=1,\ldots,d$
\begin{eqnarray*}
&& D^j_r Z(x-y,t,s) = \frac{1}{(2\pi)^{d/2}} \int_{\R^d} e^{i\zeta (x-y)} D^j_r V(t,s,\zeta) d\zeta \\
&&\qquad = -\frac{1}{(2\pi)^{d/2}} \int_{\R^d} e^{i\zeta (x-y)} V(t,s,\zeta) \left\langle  \int_{s}^{t} D^j_r \rma(u) du \ \zeta,\zeta \right\rangle d\zeta.
\end{eqnarray*}
Thus
$$D^j_r Z(x-y,t,s) = \mbox{Trace} \left[ \left(\int_{s}^{t} D^j_r \rma(u) du \right) \partial^2_x Z(x-y,t,s) \right] .$$
Therefore,
$$|D^j_r Z(x-y,t,s) | \leq \left|\int_{s}^{t} D^j_r \rma(u) du  \right| \frac{1}{t-s} \ g_{\varsigma,\varpi}(x-y,t-s).$$
Since  the Malliavin derivative of $\rma$ is bounded by $\psi(r)$, we obtain:
\begin{equation*}
|D_r Z(x-y,t,s) | \leq \psi(r)  g_{\varsigma,\varpi}(x-y,t-s).
\end{equation*}
This yields \eqref{eq:estim_mall_der_Gamma_bis}. Similar computations give:
$$
D_r \partial_{x_j} Z(x-y,t,s) = -i \frac{1}{(2\pi)^{d/2}} \int_{\R^d} e^{i\zeta (x-y)} V(t,s,\zeta)
 \left\langle  \int_{s}^{t} D^j_r \rma(u) du \ \zeta,\zeta \right\rangle \zeta_j d\zeta.
$$
Using the estimate on the third derivative of $Z$ w.r.t. $x$, we obtain \eqref{eq:estim_mall_space_der_Gamma_bis}:
\begin{equation} \label{eq:estim_mall_space_der_Z}
|D_r \partial_{x_i} Z(x-y,t,s) | \leq \psi(r)\frac{1}{(t-s)^{1/2}} \ g_{\varsigma,\varpi}(x-y,t-s) .
\end{equation}
In other words if $\rma$ does not depend on $x$, estimates \eqref{eq:aronson_estim}, \eqref{eq:aronson_estim_der}, \eqref{eq:estim_mall_der_Gamma_bis} and \eqref{eq:estim_mall_space_der_Gamma_bis} hold for $Z$. In the case $\rma(t) = a(\xi_t)$, the  constants  appearing in inequalities \eqref{eq:estim_mall_der_Gamma_bis} and \eqref{eq:estim_mall_space_der_Gamma_bis} depend on the Lipschitz constant of the matrix $a(y)$. Similar computations  also show that
$$|D_r \partial^2_{x_i x_j} Z(x-y,t,s)| \leq \psi(r)\frac{1}{\ds (t-s)} \ g_{\varsigma,\varpi}(x-y,t-s).$$

We turn to the case of $\rma$ that depends on both $x$ and $t$.
\begin{lemma}[Malliavin differentiability of $\Phi$] \label{lmm:malliavin_Phi}
The function $\Phi$ belongs to $\bD^{1,2}$ for every $(t,s)\in[0,T]^2$, $s< t$ and $(x,y)\in (\R^d)^2$. Moreover, there exists two constants $\varrho> 0$ and $\varpi>0$ such that
\begin{equation} \label{eq:estim_mall_der_Phi}
|D_r \Phi(x,t,y,s)| \leq \psi (r) \frac{1}{\sqrt{t-s}} \ g_{\varrho,\varpi}(x-y,t-s) .
\end{equation}
\end{lemma}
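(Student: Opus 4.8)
The plan is to differentiate the Neumann-type series $\Phi=\sum_{m\ge1}\cK_m$ term by term, reproducing the estimates behind Lemma~\ref{lem:estim_phi_eps} but carrying an extra factor $\psi(r)$ everywhere. First I would treat the base term $\cK=\cK_1$. By \eqref{eq:kernel_first_step}, $\cK(x,t,y,s)$ is a finite sum of products in which one factor is $\rma_{ij}(x,t)-\rma_{ij}(y,t)$ or $\gamma_i(x,t)$ and the other is a spatial derivative $\partial^2_{x_ix_j}Z(x-y,t,s,y)$ or $\partial_{x_i}Z(x-y,t,s,y)$ of the parametrix frozen at $y$. The first factors lie in $\bD^{1,2}$ by \ref{H4}--\ref{H5}, with $|D_r(\rma_{ij}(x,t)-\rma_{ij}(y,t))|\le\psi(r)\,|x-y|$ (the remark after \ref{H5}) and $|D_r\gamma_i(x,t)|\le\psi(r)$. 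In the second factors the randomness enters only through $u\mapsto\rma(y,u)$ on $[s,t]$, with $y$ a \emph{deterministic} freezing point, so the explicit computations already performed for $Z$ in this section apply with parameter $\mathfrak z=y$ and give $|D_r\partial^k_xZ(x-y,t,s,y)|\le\psi(r)\,(t-s)^{-k/2}g_{\varsigma,\varpi}(x-y,t-s)$ for $k=1,2$. The Leibniz rule $D_r(FG)=(D_rF)G+F(D_rG)$ combined with the elementary inequality $|x-y|\,e^{-\varpi|x-y|^2/(t-s)}\le C\sqrt{t-s}\,e^{-\varpi'|x-y|^2/(t-s)}$ then yields $\cK(x,t,y,s)\in\bD^{1,2}$ and
\[
|D_r\cK(x,t,y,s)|\ \le\ C\,\psi(r)\,\frac{1}{\sqrt{t-s}}\,g_{\varrho,\varpi}(x-y,t-s),
\]
that is, the bound used for $|\cK|$ in the proof of Lemma~\ref{lem:estim_phi_eps}, times $\psi(r)$.

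I would then prove by induction on $m$ that $\cK_m\in\bD^{1,2}$ with, for $m\ge2$,
\[
|D_r\cK_m(x,t,y,s)|\ \le\ \psi(r)\,\frac{M^m}{(1+m/2)!}\,(t-s)^{m/2-1}\,g_{\varrho,\varpi}(x-y,t-s),
\]
$M=M(\lambda,d,K_\rma)$ a suitable constant (the case $m=1$ is the bound above). In the inductive step I first interchange $D_r$ with the double Lebesgue integral $\int_s^t\!\int_{\R^d}\cK(x,t,\zeta,\tau)\,\cK_{m-1}(\zeta,\tau,y,s)\,d\zeta\,d\tau$; this is legitimate by the commutation of the Malliavin derivative with parameter integrals (see \cite{nual:06}), once one checks that $(\zeta,\tau)\mapsto\|\cK(x,t,\zeta,\tau)\,\cK_{m-1}(\zeta,\tau,y,s)\|_{1,2}$ is integrable, which follows from the pointwise bounds. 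Applying Leibniz under the integral produces the two terms $(D_r\cK)\,\cK_{m-1}$ and $\cK\,(D_r\cK_{m-1})$; in each of them exactly one factor carries $\psi(r)$, so the resulting bound has a single $\psi(r)$. The remaining $d\zeta$-integral is handled by the Gaussian convolution estimate (Lemma~IX.7 in \cite{frie:64}) and the $d\tau$-integral by $\int_s^t(t-\tau)^{-1/2}(\tau-s)^{(m-1)/2-1}\,d\tau=\mathrm{B}(\tfrac12,\tfrac{m-1}{2})\,(t-s)^{m/2-1}$, exactly as in Lemma~\ref{lem:estim_phi_eps}; the only new feature is a factor $2$ in the recursion for the constants (the two Leibniz terms), absorbed by enlarging $M$.

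Summing over $m$ and using that $\sum_{m\ge1}\tfrac{M^m}{(1+m/2)!}(t-s)^{m/2-1}=(t-s)^{-1/2}\,\widetilde{\Theta}(t-s)$ with $\widetilde{\Theta}$ an entire function and $\widetilde{\Theta}(t-s)\le\widetilde{\Theta}(T)$ on $[0,T]$, one obtains
\[
\sum_{m\ge1}\bigl|D_r\cK_m(x,t,y,s)\bigr|\ \le\ \psi(r)\,\frac{1}{\sqrt{t-s}}\,g_{\varrho,\varpi}(x-y,t-s)\,\widetilde{\Theta}(T),
\]
which is \eqref{eq:estim_mall_der_Phi} after absorbing $\widetilde{\Theta}(T)$ into $\varrho$. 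To conclude, let $S_N=\sum_{m=1}^N\cK_m$. By Lemma~\ref{lem:estim_phi_eps} (whose bounds are deterministic, the hypotheses holding uniformly in $\omega$) $S_N\to\Phi$ in $L^2(\Omega)$; and since $\mE\int_0^T\psi(r)^2\,dr\le T^{1-1/p}\bigl(\mE\int_0^T\psi(r)^{2p}\,dr\bigr)^{1/p}<\infty$ by \ref{H5}, the last display gives $\sum_{m\ge1}\|D\cK_m\|_{L^2(\Omega;L^2([0,T];\R^d))}<\infty$, so $(S_N)$ is Cauchy, hence convergent, in $\bD^{1,2}$. Its limit must be $\Phi$, which therefore belongs to $\bD^{1,2}$ with $D_r\Phi=\sum_{m\ge1}D_r\cK_m$ and obeys \eqref{eq:estim_mall_der_Phi}; all constants depend only on $\lambda$, $d$, $K_\rma$ and $T$.

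The main obstacle is the quantitative bookkeeping: one must arrange the induction so that the $m$-th Malliavin bound keeps constants of order $M^m/(1+m/2)!$, since only this factorial decay makes the dominating function $\psi(r)(t-s)^{-1/2}g_{\varrho,\varpi}\,\widetilde{\Theta}(T)$ finite and square-integrable in $r$, and one must justify at each stage the exchange of $D_r$ with the iterated $d\zeta\,d\tau$ integration before passing to the limit through completeness of $\bD^{1,2}$. The remainder is a line-by-line transcription of the parametrix estimates of Section~\ref{sect:deriv_fund_sol} with an added factor $\psi(r)$.
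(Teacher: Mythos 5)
Your proposal is correct and follows essentially the same route as the paper: Leibniz rule on the kernel $\cK$ from \eqref{eq:kernel_first_step} using the bounds $|D_r\rma|,|D_r\nabla\rma|\le\psi(r)$ and the explicit Malliavin estimates on the frozen parametrix $Z$, then the same factorial induction on the iterated kernels $\cK_m$ (each Leibniz term carrying a single $\psi(r)$), and finally closability/completeness of $\bD^{1,2}$ to pass to the limit of the series. Your write-up merely makes explicit two steps the paper leaves implicit, namely the justification for commuting $D_r$ with the $d\zeta\,d\tau$ integrals and the Cauchy-in-$\bD^{1,2}$ argument behind the phrase ``by the closability of the operator $D$''.
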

\begin{proof}
Recall that
$$
\gamma_i(x,t) = \mathrm{div} (\rma_i(x,t)) = \sum_{j=1}^d  \frac{\partial \rma_{ji}}{\partial x_j} (x,t).
$$
Note that due to Condition \ref{H5} the process $\gamma$ belongs also to $\bD^{1,2}$. According to \eqref{eq:kernel_first_step} the Malliavin derivative of $\cK$ is given by:
\begin{eqnarray*}
D_r \cK(x,t,y,s)& = & \sum_{i,j=1}^n \left[ D_r \rma_{ij}(x,t)- D_r \rma_{ij}(y,t) \right] \frac{\partial^2 Z}{\partial x_i \partial x_j}(x-y,t,s,y) \\
& + & \sum_{i=1}^n D_r \gamma_i(x,t) \frac{\partial Z^\eps}{\partial x_i } (x-y,t,s,y) \\
& + & \sum_{i,j=1}^n (\rma_{ij}(x,t)-\rma_{ij}(y,t)) D_r \frac{\partial^2 Z}{\partial x_i \partial x_j}(x-y,t,s,y) \\
& + & \sum_{i=1}^n  \gamma_i(x,t) D_r \frac{\partial Z}{\partial x_i } (x-y,t,s,y).
\end{eqnarray*}
From our previous assumptions and properties we deduce that
\begin{eqnarray*}
|D_r \cK(x,t,y,s) |& \leq & \psi(r)   \frac{|x-y|}{\ds (t-s)} \ g_{\varrho,\varpi}(x-y,t-s) \\
& + &  \psi(r) \frac{1}{\ds \sqrt{t-s}} \ g_{\varrho,\varpi}(x-y,t-s) \\
& + &  \frac{ |x-y|}{\ds (t-s)} \ \psi(r) \ g_{\varrho,\varpi}(x-y,t-s) \\
& + &  \psi(r)  \frac{1}{\ds \sqrt{t-s}}  \ g_{\varrho,\varpi}(x-y,t-s) \\
& \leq &  \psi(r) \frac{1}{\ds \sqrt{t-s}} \ g_{\varrho,\varpi}(x-y,t-s).
\end{eqnarray*}
By induction, using the same techniques as in  the proof of Lemma \ref{lem:estim_phi_eps}), we obtain for $m\geq 2$
\begin{eqnarray*}
|D_r \cK_m(x,t,y,s) |& \leq &\frac{M^m}{(1+m/2)!}  \psi(r) (t-s)^{m/2-1} g_{\varrho,\varpi}(x-y,t-s)
\end{eqnarray*}
with some constant  $M > 0$ depending on $\varrho$ and $\varpi$. Indeed, for $m=2$
\begin{eqnarray*}
&& \left| D_r \cK_2(x,t,y,s) \right|  \leq  \int_s^t \int_{\R^d} \left| D_r \cK(x,t,\zeta,\tau) \right| \left| \cK(\zeta,\tau,y,s) \right| d\zeta d\tau \\
 && \qquad +  \int_s^t \int_{\R^d} \left|  \cK(x,t,\zeta,\tau) \right| \left| D_r\cK(\zeta,\tau,y,s) \right| d\zeta d\tau\\
 && \quad \leq 2 \psi(r)  \int_s^t  \int_{\R^d}  \frac{1}{\sqrt{(t-\tau)(\tau-s)} }\ g_{\varrho,\varpi}(x-\zeta,t-\tau)g_{\varrho,\varpi}(\zeta-y,\tau-s) d\zeta d\tau
 \end{eqnarray*}
and the required estimate on the integral can be deduced by the classical arguments.
By the closability of the operator $D$ we conclude that
\begin{equation} \label{eq:def_deriv_Phi}
D_r \Phi(x,t,y,s)  = \sum_{m=1}^\infty D_r \cK_m (x,t,y,s)
\end{equation}
and that estimate \eqref{eq:estim_mall_der_Phi} holds.
Since $\psi(r)$ belongs to $L^2(\Omega)$, $\Phi \in \bD^{1,2}$. This completes the proof of the Lemma.
\end{proof}

We turn to the proof of Theorem \ref{thm:malliavin_deriv_fund_sol}. Let us show that $\Gamma \in \bD^{1,2}$ and that the Gaussian estimates hold for the Malliavin derivative. From the definition of $\Gamma$ in \eqref{eq:def_fund_sol_param_method}, the two previous lemmata and the properties of the Malliavin derivative $D$ we obtain that
\begin{eqnarray} \nonumber
D_\tau \Gamma(x,t,y,s) & = & D_\tau Z(x-y,t,s,y) \\ \nonumber
&+& \int_s^t \int_{\R^d} D_\tau Z(x-\zeta,t,\tau,\zeta) \Phi(\zeta,\tau,y,s) d\zeta d\tau \\ \label{eq:def_mall_deriv_Gamma}
& + & \int_s^t \int_{\R^d} Z(x-\zeta,t,\tau,\zeta) D_\tau  \Phi(\zeta,\tau,y,s) d\zeta d\tau.
\end{eqnarray}
Inequalities \eqref{eq:estim_mall_space_der_Z} and \eqref{eq:estim_mall_der_Phi} imply that
\begin{equation*}
|D_r \Gamma(x,t,y,s)| \leq \psi(r) g_{\varrho,\varpi}(x-y,t-s);
\end{equation*}
for the details see Lemma I.4.3 in \cite{frie:64}. From equation \eqref{eq:def_fund_sol_param_method_deriv} one can obtain an expression for the Malliavin derivative of $\dfrac{\partial}{\partial x_i} \Gamma(x,t,y,s)$:
\begin{eqnarray*} \nonumber
D_r \frac{\partial}{\partial x_i} \Gamma(x,t,y,s) & = & D_r \frac{\partial Z}{\partial x_i}(x-y,t,s,y) \\
& + & \int_s^t \int_{\R^d} D_r \frac{\partial Z}{\partial x_i}(x-\zeta,t,\tau,\zeta) \Phi(\zeta,\tau,y,s) d\zeta d\tau \\
& + & \int_s^t \int_{\R^d} \frac{\partial Z}{\partial x_i}(x-\zeta,t,\tau,\zeta) D_r \Phi(\zeta,r,y,s) d\zeta d\tau.
\end{eqnarray*}
Again with the help of Lemma I.4.3 in \cite{frie:64},  estimates \eqref{eq:estim_mall_space_der_Z} and \eqref{eq:estim_mall_der_Phi} imply  \eqref{eq:estim_mall_space_der_Gamma_bis}. This achieves the proof.

\subsection{Diffusion example} \label{ssect:diff_case}

Here we consider the special case
$\rma(x,t) = a (x,\xi_t),$
with a matrix-valued function $a$ defined on $\R^d \times \R^d$ such that
\begin{itemize}
\item[\bf a1.] $a$ is uniformly elliptic: for any $(x,y,\zeta) \in \R^d \times \R^d \times \R^d$
$$\lambda^{-1} |\zeta|^2\leq a(x,y)\zeta\cdot\zeta\leq \lambda |\zeta|^2.$$
\item[\bf a2.] $a$ is continuous on $\R^d \times \R^d$ and of class $C^1$ w.r.t. $x$ with a bounded derivative: for any $(x,y)$
$$|\nabla_x a(x,y) |  \leq K_a.$$
\end{itemize}
The process $\xi$ is given as the solution of the following SDE:
\begin{equation}\label{eq:SDE}
d\xi_t = \beta(t,\xi_t) dt + \sigma(t,\xi_t) dB_t,
\end{equation}
or, in the coordinate form,
$
d\xi^i_t = \beta_i(t,\xi_t) dt + \sum_{j=1}^d \sigma_{i,j}(t,\xi_t) dB^j_t.
$
We assume that the matrix-function $\sigma$ and vector-function $\beta$ possess the following properties.
\begin{itemize}
\item [\bf c1.]  $\sigma$ and $b$ are globally Lipschitz continuous: there exists $K_{\beta,\sigma}>0$ such that
$$
\|\sigma(t,y')-\sigma(t,y'')\|+ |\beta(t,y')-\beta(t,y'')| \leq K_{\beta,\sigma} |y'-y''|.
$$
\item [\bf c2.] $t \mapsto \sigma(t,0)$ and $t\mapsto \beta(t,0)$ are bounded on $\R_+$.
\item [\bf c3.]  $\sigma$ and $\beta$ are at least two times differentiable w.r.t. $x$ with uniformly bounded derivatives. The absolute value of these derivatives does not exceed a constant that is also denoted by $K_{\beta,\sigma}$.
\end{itemize}
It is well known that under the assumptions {\bf c1} and {\bf c2}, $\xi$ is the unique strong solution of the SDE \eqref{eq:SDE} and for any $T \geq 0$ and any $p\geq 2$
$$
\E \Big( \sup_{t\in [0,T]} |\xi_t|^p \Big) \leq C,
$$
where $C$ is a positive constant depending on $p$, $T$, $K_{\beta,\sigma}$ and $\xi_0$.
The next result can be found in \cite{nual:06}, Theorems 2.2.1 and 2.2.2.
\begin{lemma} \label{prop:malliavin_deriv}
Under conditions {\bf c1}-- {\bf c3}, the coordinate $\xi^i_t$ belongs to $\bD^{1,\infty}$ for any $t\in [0,T]$ and $i=1,\ldots,d$. Moreover for any $j=1,\ldots,d$ and any $p\geq 1$
\begin{equation} \label{eq:estim_deriv_xi_Lp}
\sup_{0\leq r\leq T} \mE \bigg( \sup_{r \leq t \leq T} |D^j_r\xi^i_t |^p \bigg) < +\infty.
\end{equation}
The derivative $D^j_r\xi^i_t$ satisfies the following linear equation:
\begin{eqnarray*}
D^j_r\xi^i_t & = & \sigma_{i,j}(\xi_r) + \sum_{1\leq k,l\leq d} \int_r^t \widetilde \sigma_{i,k}^{l}(s) D^j_r(\xi^k_s) dB^l_s + \sum_{k=1}^d \int_r^t \widetilde b_{i,k}(s) D^j_r (\xi^k_s )ds
\end{eqnarray*}
for $r \leq t$ a.e. and $D^j_r \xi_t = 0$ for $r>t$ a.e., where $\sigma^j$ is the column number $j$ of the matrix $\sigma$ and where for $1\leq i,j\leq d$ and $1\leq l \leq d$, $\widetilde b_{i,j}(s)$ and $\widetilde \sigma_{i,j}^{l}(s)$ are given by:
\begin{equation}\label{eq:coeff_SDE_Mall}
\widetilde b_{i,j}(s) = (\partial_{x_j} b_i) (\xi_s),\qquad \widetilde \sigma_{i,j}^{l}(s) = (\partial_{x_j}  \sigma_{i,l})  (\xi_s).
\end{equation}

The process $\xi$ belongs to $\bD^{2,\infty}$ and the second derivatives $D^i_r D^j_s \xi^k_t$ satisfy also a linear stochastic differential equation with bounded coefficients.
\end{lemma}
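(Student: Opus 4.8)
The statement is classical and is proved in detail in \cite[Theorems 2.2.1 and 2.2.2]{nual:06}; we only outline the argument. The plan is to run the Picard approximation scheme $\xi^{(0)}_t\equiv\xi_0$,
$$
\xi^{(n+1)}_t=\xi_0+\int_0^t\beta(s,\xi^{(n)}_s)\,ds+\int_0^t\sigma(s,\xi^{(n)}_s)\,dB_s,
$$
which under {\bf c1}--{\bf c2} converges to $\xi$ in $L^p(\Omega)$ uniformly on $[0,T]$ for every $p\geq 2$, and to propagate Malliavin differentiability along this scheme.

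First I would show by induction that $\xi^{(n)}_t\in\bD^{1,\infty}$ for every $t$. Assuming this for $n$, the chain rule for the Malliavin derivative (applicable since $\beta(s,\cdot)$ and $\sigma(s,\cdot)$ are $C^1$ with gradients bounded by $K_{\beta,\sigma}$ thanks to {\bf c3}) together with the commutation rule $D_r\int_0^t\Psi_s\,dB_s=\Psi_r+\int_r^t D_r\Psi_s\,dB_s$, valid for adapted Malliavin-differentiable integrands $\Psi$, yields that for $r\leq t$ the process $D^j_r\xi^{(n+1),i}_t$ solves the linear SDE
$$
D^j_r\xi^{(n+1),i}_t=\sigma_{i,j}(r,\xi^{(n)}_r)+\sum_{k,l}\int_r^t(\partial_{x_k}\sigma_{i,l})(s,\xi^{(n)}_s)\,D^j_r\xi^{(n),k}_s\,dB^l_s+\sum_{k}\int_r^t(\partial_{x_k}\beta_i)(s,\xi^{(n)}_s)\,D^j_r\xi^{(n),k}_s\,ds,
$$
and $D^j_r\xi^{(n+1)}_t=0$ for $r>t$. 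Since $\partial_x\beta$ and $\partial_x\sigma$ are bounded by $K_{\beta,\sigma}$ and $\E(\sup_t|\sigma_{i,j}(t,\xi^{(n)}_t)|^p)$ is finite uniformly in $n$ (by {\bf c1}--{\bf c2} and the uniform moment bound on $\xi^{(n)}$), the Burkholder--Davis--Gundy inequality applied to this linear equation produces an integral inequality for $\E(\sup_{r\leq u\leq t}|D^j_r\xi^{(n),i}_u|^p)$, and Gronwall's lemma closes it, giving a bound uniform in $n$, in $r$ and in the coordinates.

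The passage to the limit is then routine: $\xi^{(n)}_t\to\xi_t$ in $L^p(\Omega)$ while $D\xi^{(n)}_t$ stays bounded in $L^p(\Omega;L^2([0,T];\R^d))$, so by the closedness of $D$ and the associated weak-compactness argument (Lemma~1.2.3 in \cite{nual:06}) one gets $\xi_t\in\bD^{1,p}$ for every $p$, hence $\xi_t\in\bD^{1,\infty}$, with the estimate \eqref{eq:estim_deriv_xi_Lp} inherited from the uniform bound; moreover $D_r\xi^{(n),i}_t$ converges to the solution of the limiting linear SDE, whose pathwise uniqueness identifies $D^j_r\xi^i_t$ as stated. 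For $\bD^{2,\infty}$ one iterates the scheme one level up: differentiating the linear SDE for $D^j_r\xi^i_t$ once more in the Malliavin sense produces a linear SDE for $D^i_\tau D^j_r\xi^k_t$ whose coefficients are again $\partial_x\beta$ and $\partial_x\sigma$ evaluated at $\xi$ (bounded), with an inhomogeneous term built from $\partial^2_x\beta$, $\partial^2_x\sigma$ (bounded by {\bf c3}) times products of first-order Malliavin derivatives, which lies in every $L^p$ by the estimate just obtained; the same BDG/Gronwall argument gives $\xi\in\bD^{2,\infty}$.

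The only genuinely delicate point is justifying the commutation of $D_r$ with the Itô integral at each Picard step and the stability of the resulting identity under the $L^p$-limit; everything else reduces to standard a priori estimates. Since the full details are in \cite{nual:06}, we omit the computations.
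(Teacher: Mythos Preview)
Your outline is correct and is precisely the argument of \cite[Theorems~2.2.1 and~2.2.2]{nual:06}; the paper itself does not give a proof but simply refers to that source. Your sketch therefore matches the paper's approach exactly (only with more detail supplied).
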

For any $r \in [0,T]$ we define
\begin{equation} \label{eq:def_mall_deriv_xi_eps}
\psi(r) = \sup_{t\in [r,T]} \|D_{r} \xi_t\|.
\end{equation}
From Lemma \ref{prop:malliavin_deriv} we have for any $p\geq 2$
\begin{equation} \label{eq:mall_deriv_xi_int_cond}
\sup_{r \in [0,T]} \mE \left( \psi(r)^p \right) < +\infty.
\end{equation}
We define for any $(x,t) \in \R^d \times [0,+\infty)$
$$\rma(x,t) = a(x,\xi_t).$$
Assumptions {\bf a1} and {\bf a2} imply that Conditions \ref{H1}, \ref{H2} and \ref{H3} hold. Moreover let us assume that the matrix $a$ is smooth w.r.t. $y$ and satisfies the following regularity conditions.
\begin{description}
\item[\bf a3.] For any $1\leq j,k\leq d$
$$\left| \nabla_y a (x,y)\right| + \left| \frac{\partial^2}{\partial x_j \partial y_k} a (x,y)\right| \leq K_a.$$
\end{description}

Using conditions {\bf a2} and {\bf a3}, the previous lemma and the classical chain rule (see Proposition 1.2.3 in \cite{nual:06}), we obtain that
$$D^j_r \rma_{i,\ell}(x,t) = \sum_{k} \frac{\partial a_{i,\ell}}{\partial y_k} (x,\xi_t) D^j_r  \xi^k_t .$$
Thus $D_r \rma(x,t) = 0$ if $r > t$,  while for $r \leq t$ we have
\begin{equation} \label{eq:control_der_a_eps}
|D^k_r \rma_{ij}(x,t)| \leq  \left| \frac{\partial a_{ij}}{\partial y_\ell}  \right|  |D_r (\xi_{t}) | \leq K_a \psi(r).
\end{equation}
The same computation shows that
\begin{eqnarray*}
&& D^k_r \frac{\partial \rma_{ij}}{\partial x_\ell} (x,t)  =\sum_{k} \frac{\partial^2 a_{i,\ell}}{\partial x_\ell \partial y_k} (x,\xi_t) D^j_r  \xi^k_t .
\end{eqnarray*}
Hence
$$
\Big|D^k_r \frac{\partial \rma_{ij}}{\partial x_\ell}(x,t)\Big|  \leq K_a \psi(r).
$$
We deduce that $\rma(x,t)$ belongs to $\bD^{1,\infty}$ (condition \ref{H4}), the previous computations yield \ref{H5}, and $\psi$ satisfies the integrability condition \eqref{eq:mall_deriv_xi_int_cond}. From Theorems \ref{thm:general_result_Aronson_estim} and \ref{thm:malliavin_deriv_fund_sol} we deduce immediately the following result.
\begin{coro} \label{coro:aronson_estim_diff_case}
Under assumptions {\bf a1}\,--\,{\bf a3} on the matrix $a$ and conditions {\bf c1}\,--\,{\bf c3} on the coefficients of the SDE \eqref{eq:SDE}, if $\rma(x,t) = a(x,\xi_t)$, then the fundamental solution $\Gamma$ of equation \eqref{eq:PDE_fund_sol} and its spatial derivatives belong to $\bD^{1,2}$ and satisfy Estimates \eqref{eq:aronson_estim}, \eqref{eq:aronson_estim_der}, \eqref{eq:estim_mall_der_Gamma_bis} and \eqref{eq:estim_mall_space_der_Gamma_bis}.
\end{coro}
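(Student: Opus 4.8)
The plan is simply to verify that, for $\rma(x,t)=a(x,\xi_t)$, all of the structural hypotheses \ref{H1}--\ref{H5} of Theorems \ref{thm:general_result_Aronson_estim} and \ref{thm:malliavin_deriv_fund_sol} hold with constants that are independent of $\omega$, and then to quote those two theorems. Indeed, the four estimates \eqref{eq:aronson_estim}, \eqref{eq:aronson_estim_der}, \eqref{eq:estim_mall_der_Gamma_bis}, \eqref{eq:estim_mall_space_der_Gamma_bis} and the membership of $\Gamma$ and $\nabla_x\Gamma$ in $\bD^{1,2}$ are, word for word, the conclusions of those theorems, so once the hypotheses are in place there is nothing further to do.

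First I would dispose of \ref{H1}--\ref{H3}. Ellipticity \ref{H1} is immediate from \textbf{a1}, since for each fixed $\omega$ the random vector $\xi_t(\omega)$ merely freezes the second argument of $a$, and \textbf{a1} is uniform in that argument. For fixed $t$ and $\omega$ the map $x\mapsto a(x,\xi_t(\omega))$ is $C^1$ with $|\nabla_x\rma(x,t)|=|\nabla_x a(x,\xi_t)|\le K_a$ by \textbf{a2}, so \ref{H2} holds with $K_\rma=K_a$, again uniformly in $\omega$. Joint measurability on $\R^d\times\R_+$ and the $\cF_t$-measurability demanded by \ref{H3} follow from the continuity of $a$ together with the fact that $\xi$ is the adapted, continuous strong solution of \eqref{eq:SDE}.

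Next I would obtain \ref{H4}--\ref{H5} from Lemma \ref{prop:malliavin_deriv}, which gives $\xi^i_t\in\bD^{1,\infty}$ together with the moment bound \eqref{eq:mall_deriv_xi_int_cond} for $\psi$ defined in \eqref{eq:def_mall_deriv_xi_eps}. Since \textbf{a3} makes $a$ of class $C^1$ in $y$ with bounded $\partial_y a$, the Malliavin chain rule (Proposition 1.2.3 in \cite{nual:06}) applies to $y\mapsto a(x,y)$ evaluated at $\xi_t$ and yields
\[
D^j_r\rma_{i,\ell}(x,t)=\sum_{k=1}^d\frac{\partial a_{i,\ell}}{\partial y_k}(x,\xi_t)\,D^j_r\xi^k_t,
\]
and, applying the same rule to $\partial_{x_m}a_{i,\ell}$ (which is again $C^1$ in $y$ with $y$-derivative bounded by $K_a$, by \textbf{a3}), an analogous formula for $D_r\nabla_x\rma(x,t)$. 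In both cases the relevant $y$-derivatives of $a$ are bounded by $K_a$, so $\rma(x,t)\in\bD^{1,\infty}\subset\bD^{1,2}$ — this is \ref{H4} — and $|D_r\rma(x,t)|+|D_r\nabla_x\rma(x,t)|\le C\,K_a\,\psi(r)$ with $C$ dimensional; after replacing $\psi$ by $C K_a\psi$, which still obeys \eqref{eq:mall_deriv_xi_int_cond}, this is exactly \ref{H5}, the integrability part following from $\mE\big(\int_0^T\psi(r)^{2p}\,dr\big)\le T\sup_{r\in[0,T]}\mE(\psi(r)^{2p})<\infty$ for every $p$.

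With \ref{H1}--\ref{H5} verified with deterministic constants, Theorem \ref{thm:general_result_Aronson_estim} gives \eqref{eq:aronson_estim} and \eqref{eq:aronson_estim_der}, while Theorem \ref{thm:malliavin_deriv_fund_sol} gives $\Gamma,\nabla_x\Gamma\in\bD^{1,2}$ and the Malliavin estimates \eqref{eq:estim_mall_der_Gamma_bis}, \eqref{eq:estim_mall_space_der_Gamma_bis}, with $\varrho,\varpi$ depending only on $\lambda$, $d$, $K_\rma=K_a$ and $T$. I do not anticipate a genuine obstacle here: the one point needing a moment's care is checking that the chain rule of \cite{nual:06} is legitimately applicable — it requires precisely the $C^1$-with-bounded-derivative regularity in the composed variable that \textbf{a2}--\textbf{a3} supply — and that every constant produced along the way is non-random, which is exactly why \textbf{a1}--\textbf{a3} and \textbf{c1}--\textbf{c3} were stated with $\omega$-independent constants.
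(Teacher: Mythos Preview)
Your proposal is correct and mirrors the paper's own argument essentially step for step: the paper spends the paragraphs of Section~\ref{ssect:diff_case} verifying that \textbf{a1}--\textbf{a3} and \textbf{c1}--\textbf{c3} yield \ref{H1}--\ref{H5} (via Lemma~\ref{prop:malliavin_deriv} and the chain rule, with the same bound $|D_r\rma|+|D_r\nabla\rma|\le K_a\psi(r)$), and then states that the corollary follows ``immediately'' from Theorems~\ref{thm:general_result_Aronson_estim} and~\ref{thm:malliavin_deriv_fund_sol}.
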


\section{Mild solution of the heat SPDE} \label{sect:mild_sol}

In this last section we  construct a mild solution $v$ to the heat SPDE \eqref{eq:SPDE}
 with the initial condition $v(x,0)=0$, that is we construct a solution $v$ of equation \eqref{eq:mild_sol}.
\begin{remark}\label{rem:initial_cond}
If the initial condition for $v$ is given by a function $\imath$, then by linearity of the SPDE, we should add in \eqref{eq:mild_sol} one term:
$$v(x,t)= \int_0^t \int_{\R^d} \Gamma(x,t,y,s) G\left(y,s \right)dy dB_{s} + \int_{\R^d} \Gamma(x,t,y,0) \imath(y) dy$$
Under the setting of Theorem \ref{thm:general_result_Aronson_estim}, this additional term is well defined provided that the function $\imath$ increases no faster than a function $\exp(cx^2)$ (see \cite[Theorem I.7.12]{frie:64}).
\end{remark}
Let us specify our setting. We still assume that all hypotheses \ref{H1} to \ref{H5} hold and we add several conditions on $G$.
\begin{enumerate}[label=\textbf{(D\arabic*)}]
\item \label{D1} The function $G : \R^d \times [0,+\infty) \times \Omega \to \R^d$ is a progressively measurable function  that
satisfies the estimate $(1+|x|)^N |G(x,t)|\leq \fG (t)$
for some  $N > d/2$ and some adapted process $\fG$ such that
$$
\mE \left( \int_0^T \fG(t)^{2q} dt \right) < +\infty
$$
with some $q>1$.
\item \label{D2}For each $(x,t) \in \R^d \times [0,+\infty)$, the random variable $G(x,t)$ belongs to $\bD^{1,2}$, and for any $t \in [0,T]$ and any $x \in \R^d$,
$$|D_r G(x,t)|  \leq \widetilde G(x,t) \psi(r).$$
The process $\psi$ is the same as in Condition \ref{H5} and $\widetilde G$ verifies the growth assumption:
$(1+|x|)^N |\widetilde G(x,t)|\leq \fG (t).$
\item \label{D3} The constants $p$ of \ref{H5} and $q$ verify: $p > q > 2d+4.$
\item \label{D4} The process $\fG$ verifies $\displaystyle \mP \left( \sup_{t\in [0,T]} \fG(t) < +\infty \right) = 1$.
\end{enumerate}

\begin{remark} \label{rem:comments_hyp_D}

Under {\rm \ref{D3}}, we have the weaker condition $\frac{1}{p} + \frac{1}{q} \leq 1$. From the proofs, we are aware that this condition {\rm \ref{D3}} is a little bit too strong. But a relation between $p$, $q$ and $d$ is needed with our arguments. In \cite{alos:leon:nual:99}, this relation is implicitly given: for example in Theorem 3.5, the authors impose $p>8$ (for $d=1$). {\rm \ref{D1}} and {\rm \ref{D4}} is a little bit more general than in \cite{alos:leon:nual:99} where $G$ is bounded with respect to $(x,t)$.

Moreover the following relations hold:
$$
2 \leq \kappa \leq  \frac{2pq}{p+q}  \Rightarrow  \frac{q}{q-1} \leq \frac{\kappa q}{2q-\kappa} \leq p,\quad
\frac{1}{p} + \frac{1}{q} \leq 1  \Leftrightarrow  \frac{2pq}{p+q}\geq 2,
$$
and
$$\frac{1}{2} + \frac{p+q}{2pq} \leq \frac{q-1}{q} \leq 1 - \frac{p+q}{2pq} \leq \frac{2p-1}{2p}.$$
\end{remark}

Let us give our third main result.
\begin{thm} \label{thm:reg_mild_sol}
Let assumptions {\rm \ref{H1}}\,--\,{\rm \ref{H5}} be fulfilled, and assume that  conditions {\rm \ref{D1}}\,--\,{\rm \ref{D4}} hold. Then on $\R^d \times (0,+\infty)$, the random field $v$ given by \eqref{eq:mild_sol} is well defined, is continuous w.r.t. $(x,t)$ and has first derivatives w.r.t. $x$  such that
$$\mE \left[ \sup_{x,t}  \left( | v(x,t) |^{\frac{2pq}{p+q}} +  |\nabla v(x,t) |^{\frac{2pq}{p+q}} \right) \right] < +\infty.$$
Moreover 
$v$ is a weak solution of  the SPDE in \eqref{eq:SPDE}.
\end{thm}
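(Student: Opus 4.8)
The plan is to treat the candidate process $v(x,t)=\int_0^t\int_{\R^d}\Gamma(x,t,y,s)G(y,s)\,dy\,dB_s$ in three stages: (i) make sense of the anticipating stochastic integral; (ii) establish the integrability and regularity of $v$ and $\nabla v$; (iii) verify the weak formulation of \eqref{eq:SPDE}. For stage (i), since $\Gamma(x,t,y,s)$ is only $\cF_t$-measurable and not $\cF_s$-measurable, the integral must be understood in the Skorokhod sense, as in \cite{nual:pard:88,alos:leon:nual:99}. First I would fix $(x,t)$ and set $u_s^{x,t}(\cdot)=\int_{\R^d}\Gamma(x,t,y,s)G(y,s)\,dy$, check that $s\mapsto u_s^{x,t}$ is a process in $\bD^{1,2}(L^2([0,T]))$ using Theorem \ref{thm:malliavin_deriv_fund_sol} (for $\Gamma$ and $D_r\Gamma$) together with \ref{H4}--\ref{H5} and \ref{D1}--\ref{D2}: the Gaussian bounds $|\Gamma|\le g_{\varrho,\varpi}$, $|D_r\Gamma|\le\psi(r)g_{\varrho,\varpi}$, the polynomial decay $(1+|y|)^N|G(y,s)|\le\fG(s)$ with $N>d/2$, and $\mE\int_0^T(\psi^{2p}+\fG^{2q})<\infty$ give, after integrating the Gaussian against the polynomial weight, the bound $\mE\|u^{x,t}\|_{1,2}^2<\infty$, hence the Skorokhod integral $\delta(u^{x,t})=v(x,t)$ exists for each $(x,t)$.

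For stage (ii), the key tool is the Meyer-type $L^\kappa$ inequality for the Skorokhod integral (\cite[p.~68]{nual:pard:88} or \cite{nual:06}): $\|\delta(u)\|_{L^\kappa}\le C_\kappa\big(\|\mE\!\int u\|+\|Du\|_{L^\kappa(\Omega;L^2([0,T]^2))}\big)$-type bound, valid for $\kappa=\frac{2pq}{p+q}\ge 2$ by \ref{D3}. Applying this with the Malliavin-derivative formula
$$D_r v(x,t)=u_r^{x,t}+\int_0^t\!\!\int_{\R^d}D_r\Gamma(x,t,y,s)G(y,s)\,dy\,dB_s+\int_0^t\!\!\int_{\R^d}\Gamma(x,t,y,s)D_rG(y,s)\,dy\,dB_s,$$
and inserting the Gaussian/polynomial bounds, one controls $\mE|v(x,t)|^\kappa$ and, differentiating in $x$ (legitimate by Lemma \ref{lem:improper_integrals}-type arguments and the bound $|\nabla_x\Gamma|\le(t-s)^{-1/2}g_{\varrho,\varpi}$, which is $L^1$ in $s$ near $t$), $\mE|\nabla v(x,t)|^\kappa$, uniformly in $(x,t)$ on compacts. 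To pass from pointwise bounds to $\mE[\sup_{x,t}(\cdots)]<\infty$ and to continuity, I would establish Kolmogorov-type increment estimates: bound $\mE|v(x,t)-v(x',t')|^\kappa\le C(|x-x'|^\alpha+|t-t'|^\alpha)$ for some $\alpha>0$ by the same Meyer inequality applied to the difference, using Hölder continuity of $\Gamma$ and $\nabla_x\Gamma$ in $(x,t)$ and the exponents margin in \ref{D3} (this is where $q>2d+4$ is consumed — one needs $\kappa$ large enough relative to the number of variables $d+1$ for Kolmogorov's continuity theorem to apply, plus room for the near-diagonal singularity $(t-s)^{-1/2}$). Then $\sup_{x,t}$ over the separable modification is finite a.s. and in $L^\kappa$.

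For stage (iii), I would test \eqref{eq:SPDE} against $\varphi\in C_c^\infty(\R^d)$: the claim is that $t\mapsto\langle v(\cdot,t),\varphi\rangle+\int_0^t\langle \rma\nabla v(\cdot,s),\nabla\varphi\rangle\,ds-\int_0^t\langle G(\cdot,s),\varphi\rangle\,dB_s$ vanishes, where the last integral is again a Skorokhod integral. Substituting the mild formula and using the stochastic Fubini theorem for anticipating integrals (\cite{nual:pard:88}), the claim reduces to the pathwise PDE identity $\partial_t\Gamma=\cL\Gamma$ in the weak sense, i.e. $\langle\Gamma(x,t,\cdot,s),\varphi\rangle-\langle\delta_x,\varphi\rangle=-\int_s^t\langle\rma(\cdot,r)\nabla_x\Gamma,\nabla\varphi\rangle\,dr$, which holds $\omega$ by $\omega$ since $\Gamma$ is the weak fundamental solution constructed in Section \ref{sect:deriv_fund_sol}; combined with $\delta$-linearity and the commutation of $\delta$ with the deterministic time integral, this yields the weak-solution identity. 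The main obstacle I anticipate is stage (ii): correctly applying the Meyer/Skorokhod $L^\kappa$ estimates to kernels with the integrable-but-singular factor $(t-s)^{-1/2}$ and simultaneously getting Hölder increments in $(x,t)$ sharp enough for Kolmogorov — this is exactly the step forcing the quantitative hypothesis \ref{D3} and where the bookkeeping of exponents ($p$, $q$, $\kappa=\frac{2pq}{p+q}$, $d$) must be done carefully, mirroring but refining the computations in \cite{alos:leon:nual:99}. The stochastic-Fubini and weak-formulation verification in stage (iii), and the existence in stage (i), are comparatively routine given Theorems \ref{thm:general_result_Aronson_estim} and \ref{thm:malliavin_deriv_fund_sol}.
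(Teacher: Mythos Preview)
Your stages (i) and (iii) match the paper (Proposition~\ref{prop:L_2_estimate_on_v_2} for well-definedness, and the closing Fubini computation for the weak formulation). Stage (ii), however, has a gap at the gradient step. You propose to realise $\nabla v(x,t)$ as the Skorokhod integral of $s\mapsto\int_{\R^d}\nabla_x\Gamma(x,t,y,s)G(y,s)\,dy$, appealing to ``Lemma~\ref{lem:improper_integrals}-type arguments'' and the observation that $(t-s)^{-1/2}$ is $L^1$ in $s$ near $t$. But Lemma~\ref{lem:improper_integrals} is about Lebesgue integrals; membership in $\bL^{1,2}$ --- required both for the Skorokhod integral and for the Meyer $L^\kappa$ inequality you want to invoke --- demands \emph{square}-integrability in $s$, and the Aronson bound on $\nabla_x\Gamma$ gives after squaring the non-integrable factor $(t-s)^{-1}$. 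So this integrand is not in $\bL^{1,2}$ and the Meyer inequality does not apply to it. The alternative of proving Kolmogorov increments for $\nabla v$ directly would need H\"older continuity of $\nabla_x\Gamma$ in $x$, hence some control on $\nabla_x^2\Gamma$, which Theorem~\ref{thm:general_result_Aronson_estim} does not provide under mere measurability of $\rma$ in $t$.

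The paper sidesteps this via the \emph{factorization method}: using the semigroup identity $\Gamma(x,t,y,s)=\frac{\sin\pi\alpha}{\pi}\int_s^t\!\int_{\R^d}(t-r)^{\alpha-1}(r-s)^{-\alpha}\Gamma(x,t,z,r)\Gamma(z,r,y,s)\,dz\,dr$, stochastic Fubini, and the product rule \cite[Theorem~3.2]{nual:pard:88}, one obtains the representation \eqref{eq:v_representation} of $v$ as a sum of \emph{Lebesgue} integrals of the form $\int_0^t\!\int_{\R^d}(t-r)^{\alpha-1}\Gamma(x,t,z,r)W(z,r)\,dz\,dr$ (plus a correction term involving $D_s\Gamma$), where the auxiliary fields $X,Y$ of \eqref{eq:def_X}--\eqref{eq:def_Y} absorb the stochastic integration. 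Continuity and differentiability in $x$ then follow pathwise from volume-potential regularity (the Appendix lemmata), using only the first-order bound $|\nabla_x\Gamma|\le(t-r)^{-1/2}g_{\varrho,\varpi}$, which is integrated once in $r$ rather than squared; choosing $\alpha>1/2$ makes $(t-r)^{\alpha-3/2}$ integrable. Condition \ref{D3} is consumed through the H\"older exponents needed to bound $\sup_{x,t}$ of these deterministic integrals (Lemmata~\ref{lem:properties_of_X}, \ref{lem:auxillary_alpha_process}, \ref{lem:control_Z}), not through a Kolmogorov dimension count.
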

The notion of a weak solution is explained in Definition \ref{def:weak_sol}. 

\subsection{The diffusion case}

Again we assume that $\rma(x,t)=a(x,\xi_t)$ where $\xi$ is the solution of the SDE in \eqref{eq:SDE}. Let us fix a measurable function $g :\R^d \times [0,+\infty) \times \R^d \to \R^d$ such that $g$ is of class $C^1$ w.r.t. the last component and
\begin{equation*}
G\left(x,t \right)=  g(x,t,\xi_{t})
\end{equation*}
Then the Malliavin derivative of $G$ can be computed by a chain rule argument: $D_rG\left(x,t \right) = \nabla_y g(x,t,\xi_t ) D_r \xi_{t}$. Hence
$$|D_rG\left(x,t \right) | \leq | \nabla_y g(x,t,\xi_t )| \psi(r).$$
Let us assume that for some $N>d/2$:
$$|g(x,t,y)| + |\nabla_y g(x,t,y)| \leq C\frac{|y|}{(1+|x|)^N}.$$
Then $\fG(t) = |\xi_t|$ is continuous w.r.t. $t$, thus \ref{D4} holds. And, for any $q > 1$,
$$
\E \left( \sup_{t\in [0,T]} |\xi_t|^{2q} \right) \leq C.
$$
Therefore, \ref{D1} and \ref{D3} are also satisfied. From Theorem \ref{thm:reg_mild_sol} we get
\begin{coro} \label{coro:reg_mild_sol}
Under conditions {\bf a1}\,--\,{\bf a3} on the matrix $a$ and {\bf c1}\,--\,{\bf c3} on the coefficients of the SDE, if the previous assumptions are satisfied, then the conclusion of Theorem \ref{thm:reg_mild_sol} holds in the diffusion case.
\end{coro}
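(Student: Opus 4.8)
The plan is to verify that every hypothesis of Theorem \ref{thm:reg_mild_sol} is satisfied in the diffusion setting, so that the corollary follows at once. The coefficient side is already in place: under {\bf a1}--{\bf a3} and {\bf c1}--{\bf c3}, the computations preceding Corollary \ref{coro:aronson_estim_diff_case} show that $\rma(x,t)=a(x,\xi_t)$ satisfies \ref{H1}--\ref{H5} with $\psi$ given by \eqref{eq:def_mall_deriv_xi_eps}, and by \eqref{eq:mall_deriv_xi_int_cond} this $\psi$ obeys $\sup_{r\in[0,T]}\mE\big(\psi(r)^p\big)<+\infty$ for \emph{every} $p\geq 2$; in particular the exponent $p$ in \ref{H5} may be chosen as large as we please.

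It remains to check \ref{D1}--\ref{D4} for $G(x,t)=g(x,t,\xi_t)$. Since $g$ is measurable and $\xi$ is adapted with continuous paths, $G$ is progressively measurable; by the assumed growth of $g$ we have $(1+|x|)^N|G(x,t)|\leq C|\xi_t|=:\fG(t)$, and because $\mE\big(\sup_{t\in[0,T]}|\xi_t|^{2q}\big)<+\infty$ for all $q\geq 1$, the integrability demanded in \ref{D1} holds. The same representation of $\fG$ gives \ref{D4}, as $t\mapsto|\xi_t|$ is continuous. For \ref{D2}, $g$ being of class $C^1$ in its last argument and $\xi_t\in\bD^{1,\infty}$ (Lemma \ref{prop:malliavin_deriv}), the chain rule of Proposition 1.2.3 in \cite{nual:06} yields $D_rG(x,t)=\nabla_y g(x,t,\xi_t)\,D_r\xi_t$, whence $|D_rG(x,t)|\leq|\nabla_y g(x,t,\xi_t)|\,\psi(r)\leq\widetilde G(x,t)\psi(r)$ with $\widetilde G(x,t)=C|\xi_t|/(1+|x|)^N$, so that $(1+|x|)^N|\widetilde G(x,t)|\leq\fG(t)$ and $\psi$ is the very process of \ref{H5}. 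Finally, for \ref{D3} we fix once and for all exponents $p>q>2d+4$, which is admissible because, as noted, the moment bounds for $\psi$ (hence \ref{H5} with this $p$) and for $\fG=C|\xi|$ (hence \ref{D1} with this $q$) are available for arbitrarily large exponents.

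Having \ref{H1}--\ref{H5} and \ref{D1}--\ref{D4} all in force, Theorem \ref{thm:reg_mild_sol} applies directly: the field $v$ of \eqref{eq:mild_sol} is well defined and continuous in $(x,t)$, admits first spatial derivatives with the stated moment bound, and is a weak solution of \eqref{eq:SPDE}. There is no deep difficulty in this argument; the one place calling for attention is the simultaneous fulfilment of the exponent constraint \ref{D3} together with \ref{H5} and \ref{D1}, which is possible precisely because all $L^p$-moments of $\sup_{t\in[0,T]}|\xi_t|$ and of $\psi$ are finite.
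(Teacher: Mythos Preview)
Your proposal is correct and follows essentially the same route as the paper: the text preceding the corollary verifies \ref{D1}--\ref{D4} exactly as you do (growth bound with $\fG(t)=C|\xi_t|$, chain rule for $D_rG$, continuity of $t\mapsto|\xi_t|$, and the freedom to pick $p>q>2d+4$), while \ref{H1}--\ref{H5} are already established in Section \ref{ssect:diff_case}, so Theorem \ref{thm:reg_mild_sol} applies directly. There is no additional ingredient in the paper beyond what you have written.
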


\subsection{Construction of the mild solution} \label{ssect:mild_sol_cons}

The rest of the paper is devoted to the proof of Theorem \ref{thm:reg_mild_sol}. Let us first specify the meaning of a weak solution of equation \eqref{eq:SPDE}. 
\begin{defin} \label{def:weak_sol}
Let $v=\{ v(x,t), \ (x,t) \in \R^d \times [0,+\infty)\}$ be a random field. We say that $v$ is a weak solution of equation \eqref{eq:SPDE} if
\begin{itemize}
\item $v$ is continuous on $\R^d \times (0,+\infty)$. Moreover,  a.s. for any $x\in \R^d$,
$$\lim_{t \downarrow 0} v(x,t) = 0\ ;$$
\item $v$ has all first order partial derivatives in $x$ on $\R^d \times (0,+\infty)$ ;
\item for any test function $\phi \in C^\infty_0(\R^d)$ and for all $t \in [0,T]$ we have
\!\!\!\begin{eqnarray*} \int\limits_{\R^d}\! v(x,t) \phi(x) dx+ \!\!  \int\limits_0^t\!\! \int\limits_{\R^d}\!  a(x,s) \nabla \phi(x)  \nabla  v(x,s) dx  =\!\! \int\limits_0^t  \!\!\int\limits_{\R^d}\! G(x,s) \phi(x) dx dB_s.
\end{eqnarray*}
\end{itemize}
\end{defin}
Our aim is to prove that the random function $v$ given by \eqref{eq:mild_sol} is a weak solution of the SPDE \eqref{eq:SPDE}. The stochastic integral in \eqref{eq:mild_sol} has to be defined properly since $\Gamma(x,t,y,s)$ is measurable w.r.t. the $\sigma$-field $\mathcal{F}_{t}$ generated by the random variables $B_u$ with $u \leq t$. The correct definition can be found in \cite{nual:pard:88} and is based on Malliavin's calculus.
To define a mild solution of  \eqref{eq:SPDE}, let us recall \cite[Definition 3.1]{nual:pard:88}.
\begin{defin} \label{def:L_1_2}
Let $\bL^{1,2}$ denote class of scalar processes $u \in \bL^2([0,T]\times \Omega)$ such that $u_t \in \bD^{1,2}$ for a.a. $t$ and there exists a measurable version of $D_r u_t$ verifying
$$\mE \int_0^T \int_0^T |D_r u_t|^2 dr dt < +\infty.$$
$\bL^{1,2}_d$ is the set of $d$-dimensional processes whose components are in $\bL^{1,2}$.
\end{defin}
\begin{prop} \label{prop:L_2_estimate_on_v_2}
For any $(t,x) \in [0,T] \times \R^d$, the stochastic integral
$$v(x,t)= \int_0^t \int_{\R^d} \Gamma(x,t,y,s) G\left(y,s \right)dy dB_{s}$$
is well defined and
$$\mE \left[ \int_0^T \int_{\R^d} (v(x,t))^2 dx dt \right] < +\infty.$$
\end{prop}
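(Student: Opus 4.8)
The plan is to realize $v(x,t)$ as a Skorokhod (anticipating) integral and to apply the $\bL^{1,2}$-criterion of Nualart--Pardoux \cite{nual:pard:88}. Fix $(t,x)\in[0,T]\times\R^d$ and introduce the $\R^d$-valued process
$$
u^{x,t}_s \;:=\; \mathbf{1}_{[0,t]}(s)\int_{\R^d}\Gamma(x,t,y,s)\,G(y,s)\,dy , \qquad s\in[0,T].
$$
By \cite{nual:pard:88}, if $u^{x,t}\in\bL^{1,2}_d$ then it is Skorokhod integrable, the integral $v(x,t)=\int_0^t u^{x,t}_s\cdot dB_s$ is well defined, and
$$
\mE\big[(v(x,t))^2\big]\;\le\;\mE\int_0^T\!|u^{x,t}_s|^2\,ds\;+\;\mE\int_0^T\!\!\int_0^T\!\big|D_r u^{x,t}_s\big|^2\,dr\,ds .
$$
So the proof splits into two tasks: (i) check that $u^{x,t}\in\bL^{1,2}_d$ and identify its Malliavin derivative; (ii) estimate the two right-hand sides so that the bound, after integration in $x$ and $t$, is finite.

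For task (i) I would argue as in the proof of Lemma \ref{lmm:malliavin_Phi}. The continuity statements of Theorems \ref{thm:general_result_Aronson_estim} and \ref{thm:malliavin_deriv_fund_sol}, together with the progressive measurability of $G$, $\widetilde G$ and $\psi$, guarantee that $(y,s)\mapsto\Gamma(x,t,y,s)G(y,s)$ and $(y,s,r)\mapsto D_r\big(\Gamma(x,t,y,s)G(y,s)\big)$ admit jointly measurable versions. For each fixed $y$ one has $\Gamma(x,t,y,s)\in\bD^{1,2}$ (Theorem \ref{thm:malliavin_deriv_fund_sol}) and $G(y,s)\in\bD^{1,2}$ (condition \ref{D2}), hence so is their product by the Leibniz rule; approximating $\int_{\R^d}\,dy$ by Riemann sums and passing to the limit via the closability of $D$ — the uniform integrability needed being supplied by the Gaussian bounds \eqref{eq:aronson_estim}, \eqref{eq:aronson_estim_der}, \eqref{eq:estim_mall_der_Gamma_bis} and the growth conditions \ref{D1}--\ref{D2} — one obtains $u^{x,t}_s\in\bD^{1,2}$ for a.e.\ $s$ with
$$
D_r u^{x,t}_s=\mathbf{1}_{[0,t]}(s)\int_{\R^d}\Big[D_r\Gamma(x,t,y,s)\,G(y,s)+\Gamma(x,t,y,s)\,D_rG(y,s)\Big]\,dy .
$$
This commutation of $D_r$ with $\int_{\R^d}\,dy$ is the only genuinely technical point of the argument; it is the analogue, at the level of $\Gamma$, of what is already carried out in Section \ref{sect:malliavin_deriv_fund_sol} for $\Phi$.

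For task (ii) the basic observation is that $g_{c,C}(\cdot,\tau)\in L^1(\R^d)$ with $\|g_{c,C}(\cdot,\tau)\|_{L^1(\R^d)}=c\,(\pi/C)^{d/2}$, a constant independent of $\tau>0$. Using $|\Gamma(x,t,y,s)|\le g_{\varsigma,\varpi}(x-y,t-s)$ and $(1+|y|)^N|G(y,s)|\le\fG(s)$ (assumption \ref{D1}) gives $|u^{x,t}_s|\le\big(g_{\varsigma,\varpi}(\cdot,t-s)\ast|G(\cdot,s)|\big)(x)\le C\,\fG(s)$, which already shows that $v(x,t)$ is meaningful for every fixed $(x,t)$. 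To handle the integral over $\R^d$, I would integrate in $x$ first and invoke Young's convolution inequality $\|f\ast h\|_{L^2}\le\|f\|_{L^1}\|h\|_{L^2}$: since $2N>d$,
$$
\int_{\R^d}|u^{x,t}_s|^2\,dx\le\big\|g_{\varsigma,\varpi}(\cdot,t-s)\ast|G(\cdot,s)|\big\|_{L^2(\R^d)}^2\le C\,\|G(\cdot,s)\|_{L^2(\R^d)}^2\le C\,\fG(s)^2 ,
$$
and, using in addition \eqref{eq:estim_mall_der_Gamma_bis} and assumption \ref{D2} (with $|D_rG(y,s)|\le\widetilde G(y,s)\psi(r)$ and $\widetilde G$ of the same growth as $G$),
$$
\int_{\R^d}\big|D_r u^{x,t}_s\big|^2\,dx\le C\,\mathbf{1}_{[0,t]}(s)\,\psi(r)^2\,\fG(s)^2 .
$$

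Inserting these bounds into the Nualart--Pardoux inequality and applying Tonelli's theorem,
$$
\int_{\R^d}\mE\big[(v(x,t))^2\big]\,dx\le C\,\mE\int_0^T\!\fG(s)^2\,ds+C\,\mE\Big[\Big(\int_0^T\!\psi(r)^2\,dr\Big)\Big(\int_0^T\!\fG(s)^2\,ds\Big)\Big],
$$
which, by Hölder's inequality in $(r,s,\omega)$ and the integrability assumptions \ref{H5} ($\psi\in L^{2p}$) and \ref{D1} ($\fG\in L^{2q}$) with $p>q>2d+4$ (in particular $\tfrac1p+\tfrac1q\le1$, cf.\ Remark \ref{rem:comments_hyp_D}), is a finite constant independent of $t\in[0,T]$. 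One final application of Tonelli then yields $\mE\int_0^T\int_{\R^d}(v(x,t))^2\,dx\,dt<\infty$, which completes the proof. The main obstacle, as stressed above, is step (i): establishing the Malliavin differentiability of the $y$-averaged kernel $u^{x,t}_s$ and justifying the interchange of $D_r$ with the spatial integral.
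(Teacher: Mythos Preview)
Your argument is correct and follows essentially the same architecture as the paper's proof: define $u^{x,t}_s$, verify membership in $\bL^{1,2}_d$ via the Nualart--Pardoux criterion, compute $D_r u^{x,t}_s$ by Leibniz, and close with H\"older using \ref{H5} and \ref{D1}. The one noteworthy difference is in how the spatial integral $\int_{\R^d}\cdots\,dx$ is controlled. The paper derives the \emph{pointwise} bound
\[
|u(x,t,s)|^2 + \psi(r)^{-2}|D_r u(x,t,s)|^2 \;\le\; \frac{C}{(1+|x|)^{2N}}\,\fG(s)^2,
\]
which amounts to the (true but slightly nontrivial) fact that convolution with a Gaussian preserves the weight $(1+|x|)^{-2N}$. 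You bypass this by integrating in $x$ first and invoking Young's inequality $\|g\ast G\|_{L^2}\le\|g\|_{L^1}\|G\|_{L^2}$, which is cleaner for the stated proposition. The price is that you obtain only an $L^2(\R^d)$ bound rather than a pointwise one; the paper's pointwise estimate \eqref{eq:upper_bound_u}--\eqref{eq:upper_bound__mall_der_u} is reused later (Lemma \ref{lem:auxillary_alpha_process}), so in the overall economy of the paper their route saves a step downstream.
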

\begin{proof}
From Theorem \ref{thm:malliavin_deriv_fund_sol} and  condition \ref{D1} on $G$, we deduce that for each $(x,t) \in \R^d \times [0,T]$, the process
\begin{equation}\label{eq:def_proc_u}
u(x,t,s) =  \int_{\R^d} \Gamma(x,t,y,s) G(y,s) dy
\end{equation}
is well defined. Aronson's estimate \eqref{eq:aronson_estim}, H\"older's inequality and condition \ref{D1} lead to:
\begin{equation} \label{eq:upper_bound_u}
|u(x,t,s)|^2 \leq C   \int_{\R^d} g_{\varsigma,\varpi}(x-y,t-s) |G(y,s)|^2 dy \leq \frac{C}{(1+|x|)^{2N}}\fG(s)^2  .
\end{equation}
Therefore,
\begin{equation} \label{eq:L2_cond_anticipating_int}
\mE \int_0^t |u(x,t,s)|^2 ds \leq \frac{C^2}{(1+|x|)^{2N}}\mE \int_0^t \fG(s)^2 ds < +\infty.
\end{equation}
Moreover,
$$D_r u(x,t,s)  = \int_{\R^d} \left[ D_r \Gamma(x,t,y,s) G(y,s) + \Gamma(x,t,y,s) D_r G(y,s) \right] dy.$$
Therefore, from estimate \eqref{eq:estim_mall_der_Gamma_bis} on $D_r \Gamma$, H\"older's inequality and conditions \ref{D1} and \ref{D2} for $G$ and $\widetilde G$, we obtain
\begin{eqnarray}  \nonumber
|D_r u(x,t,s)|^2 & \leq &\left(  \int_{\R^d} \left| D_r \Gamma(x,t,y,s) G(y,s) + \Gamma(x,t,y,s) D_r G(y,s) \right| dy\right)^2 \\  \nonumber
& \leq & \psi(r)^2 C  \int_{\R^d} g_{\varrho,\varpi}(x-y,t-s) \left[ |G(y,s)|^2 + |\widetilde G (y,s)|^2 \right] dy \\\label{eq:upper_bound__mall_der_u}
& \leq &   \frac{C}{(1+|x|)^{2N}} \psi(r)^2\fG (s)^2.
\end{eqnarray}
Applying again the H\"older inequality yields
\begin{eqnarray*}
&& \mE \int_0^t \int_0^t |D_r u(x,t,s) |^2 ds dr \\ \nonumber
&& \quad  \leq  \frac{C^2}{(1+|x|)^{2N}}\mE\left[ \left(  \int_0^t \psi(r)^2 dr \right)^{\frac{q}{q-1}}\right]^{\frac{q-1}{q}}  \mE \left[ \left( \int_0^t \fG (s)^2 ds \right)^{q} \right]^{\frac{1}{q}}.
\end{eqnarray*}
Since $p \geq q/(q-1)$, using Jensen's inequality, we obtain
\begin{equation}\label{eq:L2_cond_anticipating_int_2}
 \mE \int_0^t \int_0^t |D_r u(x,t,s) |^2 ds dr < +\infty.
\end{equation}
Conditions \eqref{eq:L2_cond_anticipating_int} and \eqref{eq:L2_cond_anticipating_int_2} are exactly the ones required in Definition \ref{def:L_1_2}. Hence $u(x,t,s)$ belongs to the space $\bL^{1,2}_d$ and the stochastic integral $v(x,t)$
is well-defined for any $(x,t)$. Moreover, the isometric property of the anticipating It\^o integral holds (see Eq. (3.5) in \cite{nual:pard:88}):
\begin{eqnarray*}
\mE((v(x,t))^2) & = & \mE \int_{0}^t |u(x,t,s)|^2 ds + \mE \int_{0}^t\int_{0}^t  |D_r u (x,t,s)|^2 ds dr.
\end{eqnarray*}
From our previous estimates \eqref{eq:L2_cond_anticipating_int} and \eqref{eq:L2_cond_anticipating_int_2}, we obtain that
\begin{eqnarray*}
&& \mE \int_0^T \int_{\R^d} (v(x,t))^2 dx dt \\
&&\quad  \leq C \mE\left[ \left(  \int_0^T \psi(r)^2 dr \right)^{\frac{q}{q-1}}\right]^{\frac{q-1}{q}}  \mE \left[ \left( \int_0^T \fG(s)^2 ds \right)^{q} \right]^{\frac{1}{q}}.
\end{eqnarray*}
\end{proof}

We are going to prove that $(x,t)\mapsto v(x,t)$ is continuous and $x \mapsto v(x,t)$ is differentiable. Note that we cannot directly use \cite[Theorem 5.2]{nual:pard:88} since $\Gamma$ also depends on $t$. Even if $\Gamma$ is continuous on $\{0\leq s < t \leq T\}$, the singularity at time $t$ should be handled carefully. We follow some ideas contained in \cite[Section 3]{alos:leon:nual:99} and the regularity results concerning the volume potential (see Lemmata \ref{lem:reg_volume_potential} and \ref{lem:reg_volume_potential_2} in the Appendix). The main trick is to transform the anticipating stochastic integral $v$ into a Lebesgue integral. 

\subsubsection{Another representation of $v$}

Given $\alpha \in (0,1)$  define for any $(t,x) \in [0,T]\times \R^d$:
\begin{eqnarray}  \label{eq:def_X}
X(x,t) &=& \int_0^t \int_{\R^d} (t-s)^{-\alpha} D_s \Gamma(x,t,y,s) G(y,s) dy ds,\\  \label{eq:def_Y}
Y(x,t) &=& \int_0^t \int_{\R^d} (t-s)^{-\alpha} \Gamma(x,t,y,s)G(y,s)dy dB_s.
\end{eqnarray}
Due to the Aronson estimate \eqref{eq:estim_mall_der_Gamma_bis} on $D_s \Gamma$ and hypothesis \ref{D1} on $G$
the field $X$ is well defined for any $\al \in [0,1)$.
\begin{lemma} \label{lem:properties_of_X}
Assume that $0\leq  \alpha < \frac{2p-1}{2p}$. 
Then a.s. $(x,t) \mapsto X(x,t)$ is continuous. Moreover, for any $\alpha < 1-\frac{p+q}{2pq}$ and any $1 < \delta \leq \frac{2pq}{p+q}$
\begin{eqnarray*}
&& \mE \left(\sup_{x,t}  |X(x,t)|^{\delta} \right) +\mE \int_0^T \int_{\R^d} |X(x,t)|^{\delta} dx dt  \\
&& \qquad \leq C \mE \left[ \left( \int_0^{T} \psi(s)^{2p} ds \right)\right]^{\frac{q}{p+q}} \left[ \mE \left( \int_0^{T} \fG(s)^{2q} ds \right)\right]^{\frac{p}{p+q}}.
\end{eqnarray*}
Assume furthermore that $0\leq \alpha < \frac{p-1}{2p}$. Then a.s. $x \mapsto X(x,t)$ is differentiable:
$$\nabla X(x,t) = \int_0^{t} \int_{\R^d} (t-s)^{-\alpha} D_s \nabla \Gamma(x,t,y,s) G(y,s) dy ds$$
and if $0\leq 2\alpha < 1-\frac{p+q}{pq}$, then $\displaystyle \mE \left[ \sup_{x,t} |\nabla X(x,t)|^{\frac{2pq}{p+q}} \right] < +\infty.$
\end{lemma}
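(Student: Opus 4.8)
The plan is to treat $X(x,t)$ as a deterministic (Lebesgue) integral depending on the random data through $D_s\Gamma$, $\nabla\Gamma$ and $G$, and to obtain all three assertions — continuity in $(x,t)$, the $L^\delta$ bounds, and differentiability in $x$ — from the Gaussian estimates \eqref{eq:estim_mall_der_Gamma_bis} and \eqref{eq:estim_mall_space_der_Gamma_bis} together with H\"older's inequality, exactly along the lines of the volume potential lemmata quoted in the appendix. First I would record the pointwise bound obtained by inserting \eqref{eq:estim_mall_der_Gamma_bis} and \ref{D1}:
\begin{eqnarray*}
|X(x,t)| &\leq& \int_0^t (t-s)^{-\alpha}\psi(s)\int_{\R^d} g_{\varrho,\varpi}(x-y,t-s)\,|G(y,s)|\,dy\,ds\\
&\leq& \frac{C}{(1+|x|)^N}\int_0^t (t-s)^{-\alpha}\psi(s)\fG(s)\,ds,
\end{eqnarray*}
where I have used that $g_{\varrho,\varpi}(\cdot,t-s)$ is an $L^1$ kernel and that $(1+|x-y|)^{-N}g_{\varrho,\varpi}(x-y,t-s)$ integrates against the polynomial weight to something bounded by $C(1+|x|)^{-N}$.

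Next, for the $L^\delta$ estimate I would raise the last display to the power $\delta\leq \tfrac{2pq}{p+q}$, apply H\"older in $s$ with exponents chosen so that $\psi$ carries a $2p$-th power and $\fG$ a $2q$-th power — this is where the hypothesis $\alpha<1-\tfrac{p+q}{2pq}$ enters, since it guarantees that the residual power of $(t-s)^{-\alpha}$ is integrable on $[0,T]$ after H\"older — and then take expectations, using the integrability of $\psi$ from \ref{H5} and of $\fG$ from \ref{D1}. Taking the supremum over $x$ is immediate from the $(1+|x|)^{-N\delta}$ decay, and integrating over $x\in\R^d$ converges because $N\delta>d$ (using $N>d/2$ and $\delta\geq 1$, or more comfortably $\delta>1$ as stated); integrating over $t\in[0,T]$ is harmless. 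For continuity in $(x,t)$ I would invoke the regularity statement for volume potentials (Lemma \ref{lem:reg_volume_potential}): the integrand $(t-s)^{-\alpha}D_s\Gamma(x,t,y,s)G(y,s)$ has precisely the structure of a volume potential with an integrable time singularity of order $\alpha<1$, so continuity of $\Gamma$ and $D_r\Gamma$ in $(x,y,s,t)$ away from $s=t$ (Theorem \ref{thm:malliavin_deriv_fund_sol}) propagates to $X$ by dominated convergence, the domination being the bound just derived.

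For the differentiability in $x$ I would differentiate under the integral sign, justified again by Lemma \ref{lem:reg_volume_potential_2}, obtaining $\nabla X$ with $D_s\nabla_x\Gamma$ in place of $D_s\Gamma$; the extra factor $(t-s)^{-1/2}$ coming from \eqref{eq:estim_mall_space_der_Gamma_bis} is exactly why the sharper condition $\alpha<\tfrac{p-1}{2p}$ is needed, so that $\alpha+\tfrac12$ stays below the integrability threshold $1-\tfrac1{2p}$ after the same H\"older split. Then the pointwise bound becomes
$$
|\nabla X(x,t)|\leq \frac{C}{(1+|x|)^N}\int_0^t (t-s)^{-\alpha-\frac12}\psi(s)\fG(s)\,ds,
$$
and repeating the H\"older-plus-expectation argument with $\delta=\tfrac{2pq}{p+q}$ gives $\mE[\sup_{x,t}|\nabla X(x,t)|^{2pq/(p+q)}]<\infty$ provided $2\alpha+1<2-\tfrac{p+q}{pq}$, i.e. $2\alpha<1-\tfrac{p+q}{pq}$, which is the stated hypothesis. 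The main obstacle is purely bookkeeping: tracking the three competing exponents — the time-singularity order ($\alpha$ or $\alpha+\tfrac12$), the Malliavin integrability $2p$ of $\psi$, and the integrability $2q$ of $\fG$ — through the H\"older step so that every exponent stays in its admissible range; the hypotheses on $\alpha$ are calibrated so that one can always do so, and once the pointwise bounds are in hand the continuity and differentiability are routine applications of the volume-potential regularity lemmata.
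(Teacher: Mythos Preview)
Your plan is correct and matches the paper's argument almost step for step: the pointwise bound via \eqref{eq:estim_mall_der_Gamma_bis} and \ref{D1}, the three-way H\"older split in $s$ with exponents $(r,2p,2q)$ satisfying $1/r+1/(2p)+1/(2q)=1$ (so that $\alpha r<1$ is exactly $\alpha<1-\tfrac{p+q}{2pq}$), the appeal to the volume-potential lemmata for regularity, and the repetition with $\alpha$ replaced by $\alpha+\tfrac12$ for the gradient.

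One point to make explicit. For the \emph{continuity} assertion the hypothesis is only $\alpha<\tfrac{2p-1}{2p}$, which is strictly weaker than $\alpha<1-\tfrac{p+q}{2pq}$; your three-way H\"older domination bound does not cover this wider range. The paper handles this by invoking \ref{D4} to pull out $\sup_{s\in[0,T]}\fG(s)$ as an a.s.\ finite constant, and then applies H\"older only between $\psi$ (exponent $2p$) and the singularity $(t-s)^{-\alpha}$ (exponent $\tfrac{2p}{2p-1}$), which is precisely where the threshold $\tfrac{2p-1}{2p}$ comes from. Without \ref{D4} your dominated-convergence argument would only establish continuity for the smaller range.
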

\begin{proof}
We already know that $D_s \Gamma(x,t,y,s)$ is continuous w.r.t. $(x,y)$ and $s < t$. Thanks to \eqref{eq:estim_mall_der_Gamma_bis}, we have a.s.
\begin{eqnarray*}
&&  \int_{\R^d} (t-s)^{-\alpha} |D_s \Gamma(x,t,y,s)| dy  \\
&& \quad \leq  \int_{\R^d} (t-s)^{-\alpha} \psi(s) g_{\varrho,\varpi}(x-y,t-s) dy  \leq C  \psi(s) (t-s)^{-\alpha}.
\end{eqnarray*}
From our assumption on $\alpha$ and $\psi$ we have
$$\int_0^t \psi(s) (t-s)^{-\alpha} ds \leq  \left( \int_{0}^t \psi(s)^{2p} ds \right)^{\frac{1}{2p}}  \left( \int_{0}^t (t-s)^{-\frac{2p\alpha}{2p-1}} ds \right)^{\frac{2p-1}{2p}}< +\infty.$$
Moreover,  a.s.
$$\sup_{y,s} |G(y,s)| \leq \sup_{y,s} \frac{\fG(s)}{(1+|y|)^N} < +\infty.$$
Arguing as in the proof of Lemma \ref{lem:reg_volume_potential}, we get the a.s. continuity of $X$ w.r.t. $(x,t)$. From estimate \eqref{eq:estim_mall_der_Gamma_bis} on $D_s \Gamma$ we deduce
$$ \int_{\R^d}  \left| D_s \Gamma(x,t,y,s) G(y,s) \right| dy  \leq C \psi(s) \fG(s)  \frac{1}{(1+|z|)^{N}}.$$
Let us choose $r > 1$ such that $1/r+1/(2p)+1/(2q) = 1$ and $\alpha r <1$. Then
\begin{eqnarray*}
&& |X(x,t)|  \leq  \frac{C}{(1+|x|)^{N}} \int_0^t (t-s)^{-\alpha} \psi(s) \fG(s) ds \\
&&  \ \leq \frac{C}{(1+|x|)^{N}} \left(  \int_0^t (t-s)^{- r\alpha} ds \right)^{\frac{1}{r}} \left( \int_0^t  \fG(s)^{2q} ds \right)^{\frac{1}{2q}}\left(  \int_0^t \psi(s)^{2p} ds \right)^{\frac{1}{2p}} \\
&& \ \leq  \frac{C}{(1+|x|)^{N}} \left( \int_0^T  \fG(s)^{2q} ds \right)^{\frac{1}{2q}}\left(  \int_0^T \psi(s)^{2p} ds \right)^{\frac{1}{2p}}.
\end{eqnarray*}
Finally,  the H\"older and Jensen inequalities lead to the desired result.

To obtain the differentiability observe that estimate \eqref{eq:estim_mall_space_der_Gamma_bis} leads to:
\begin{eqnarray*}
(t-s)^{-\al}|D_s \nabla_x \Gamma(x,t,y,s) | & \leq&  \psi(s)(t-s)^{-\al-1/2} g_{\varrho,\varpi}(x-y,t-s) .
\end{eqnarray*}
It then remains to apply the same arguments as above with $\al+1/2$ instead of $\alpha$.
%
\end{proof}

In the next lemma we prove that $Y$ is well defined and integrable.
\begin{lemma} \label{lem:auxillary_alpha_process}
For any $(t,s,x) \in [0,T]^2\times \R^d$ and any $0\leq \alpha < \frac{q-1}{q}$, the process
\begin{eqnarray*}
u_\al(x,t,s)&=&(t-s)^{-\alpha} \int_{\R^d}\Gamma(x,t,y,s)G(y,s) dy \mathbf 1_{[0,t)}(s)
\end{eqnarray*}
belongs to $\bL^{1,2}_d$.  
Moreover, for any $2 \leq \kappa \leq \frac{2pq}{p+q}$ it holds
\begin{eqnarray*}
&& \mE \left[  \left| Y(x,t) \right|^{\kappa} \right] \leq  \frac{C}{(1+|x|)^{\kappa N}} .
\end{eqnarray*}
\end{lemma}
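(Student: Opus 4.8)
The plan is to mimic the proof of Proposition \ref{prop:L_2_estimate_on_v_2}, but now carrying the extra singular weight $(t-s)^{-\al}$ and tracking higher moments. First I would show that $u_\al(x,t,\cdot)$ lies in $\bL^{1,2}_d$ exactly as in Proposition \ref{prop:L_2_estimate_on_v_2}: using the Aronson estimate \eqref{eq:aronson_estim}, H\"older's inequality and \ref{D1}, the inner spatial integral obeys
$$
\Big| \int_{\R^d}\Gamma(x,t,y,s)G(y,s)\,dy\Big| \leq \frac{C}{(1+|x|)^N}\,\fG(s),
$$
so that $|u_\al(x,t,s)|^2 \leq C(1+|x|)^{-2N}(t-s)^{-2\al}\fG(s)^2\mathbf 1_{[0,t)}(s)$; since $\al<\frac{q-1}{q}<\frac12$ (recall \ref{D3} gives $q>2$), H\"older in $s$ with exponents $q$ and $q/(q-1)$ and condition \ref{D1} make $\mE\int_0^t|u_\al(x,t,s)|^2ds$ finite. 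For the Malliavin derivative I would use
$$
D_r u_\al(x,t,s)=(t-s)^{-\al}\int_{\R^d}\big[D_r\Gamma(x,t,y,s)G(y,s)+\Gamma(x,t,y,s)D_rG(y,s)\big]dy\,\mathbf 1_{[0,t)}(s),
$$
bound it by $C(1+|x|)^{-N}(t-s)^{-\al}\psi(r)\fG(s)$ via \eqref{eq:estim_mall_der_Gamma_bis}, \ref{D1} and \ref{D2}, and then apply H\"older in $(r,s)$ together with $p\geq q/(q-1)$ (from \ref{D3}) and Jensen to conclude $\mE\int_0^t\int_0^t|D_ru_\al(x,t,s)|^2\,dr\,ds<+\infty$. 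This places $u_\al$ in $\bL^{1,2}_d$, so $Y(x,t)$ is a well-defined Skorohod integral and the isometry of \cite{nual:pard:88} applies.

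Next, for the moment bound on $Y$, I would start from the anticipating isometry
$$
\mE\big[|Y(x,t)|^2\big]=\mE\int_0^t|u_\al(x,t,s)|^2ds+\mE\int_0^t\int_0^t|D_ru_\al(x,t,s)|^2\,dr\,ds,
$$
but since we want the exponent $\kappa$ up to $\frac{2pq}{p+q}$ rather than just $2$, I would instead invoke the $L^\kappa$ version of the Skorohod-integral estimate (the Meyer-type inequality, as used in \cite{alos:leon:nual:99} and \cite{nual:06}), which controls $\mE|Y(x,t)|^\kappa$ by a constant times $\mE\big(\int_0^t|u_\al(x,t,s)|^2ds\big)^{\kappa/2}+\mE\big(\int_0^t\int_0^t|D_ru_\al(x,t,s)|^2\,dr\,ds\big)^{\kappa/2}$. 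Plugging in the pointwise bounds from the first paragraph, each term is at most $C(1+|x|)^{-\kappa N}$ times a product of the form
$$
\Big(\int_0^t(t-s)^{-2\al\theta}ds\Big)^{\cdot}\,\mE\Big[\Big(\int_0^T\psi(r)^{2p}dr\Big)^{\cdot}\Big]\,\mE\Big[\Big(\int_0^T\fG(s)^{2q}ds\Big)^{\cdot}\Big],
$$
and I would choose the H\"older exponents so that the $\psi$-integral is raised to a power $\leq p\cdot(\text{something})$ and the $\fG$-integral to a power $\leq q$, which is precisely where the numerology of Remark \ref{rem:comments_hyp_D} ($2\le\kappa\le\frac{2pq}{p+q}\Rightarrow\frac{q}{q-1}\le\frac{\kappa q}{2q-\kappa}\le p$, etc.) is used. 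The condition $\al<\frac{q-1}{q}$ guarantees $\int_0^t(t-s)^{-2\al\theta}ds<\infty$ for the relevant $\theta$, and conditions \ref{D1}, \ref{H5} finish the finiteness.

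The main obstacle I anticipate is the bookkeeping of H\"older exponents needed to simultaneously (i) absorb the time singularity $(t-s)^{-2\al}$ into an integrable power, (ii) keep the power of $\int_0^T\psi^{2p}$ at most what $p$-integrability allows, and (iii) keep the power of $\int_0^T\fG^{2q}$ at most $q$ — all while allowing $\kappa$ to range up to $\frac{2pq}{p+q}$. Getting a single choice of exponents that works uniformly across this range of $\kappa$ is the delicate point; the inequalities collected in Remark \ref{rem:comments_hyp_D} are exactly tailored to make this feasible, so the proof amounts to selecting, for given $\kappa$, the exponent triple $(r,2p/\cdot,2q/\cdot)$ compatible with those inequalities and then invoking Jensen to trade the resulting powers of the finite quantities $\mE\int_0^T\psi^{2p}$ and $\mE\int_0^T\fG^{2q}$ against each other, producing the stated bound with the $\frac{q}{p+q}$ and $\frac{p}{p+q}$ exponents. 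A secondary technicality is justifying the $L^\kappa$ Skorohod estimate in our setting, but this is standard given $u_\al\in\bL^{1,2}_d$ with the integrability just established, so I would simply cite \cite{nual:06} and \cite{nual:pard:88}.
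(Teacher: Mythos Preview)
Your approach is essentially the same as the paper's: bound $u$ and $D_r u$ pointwise via the Aronson-type estimates and \ref{D1}, \ref{D2}, then push these through H\"older to get $u_\al\in\bL^{1,2}_d$, and finally apply an $L^\kappa$ inequality for Skorohod integrals together with H\"older to reach the moment bound on $Y$. Two small points to correct.

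First, the parenthetical ``since $\al<\frac{q-1}{q}<\frac12$'' is wrong: for $q>2$ one has $\frac{q-1}{q}=1-\frac1q>\frac12$. The integrability of $\int_0^t(t-s)^{-2\al}\fG(s)^2\,ds$ does not come from $2\al<1$ but from the H\"older split with exponents $q/(q-1)$ and $q$, which is exactly what the paper does (it records the condition $\frac{\al q}{q-1}<1$, i.e.\ $\al<\frac{q-1}{q}$). So keep the H\"older argument, drop the $<\frac12$ claim.

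Second, at the end you refer to ``the stated bound with the $\frac{q}{p+q}$ and $\frac{p}{p+q}$ exponents''. Those exponents belong to Lemma \ref{lem:properties_of_X} (the bound on $X$), not to the present lemma; here the target is simply $\mE|Y(x,t)|^\kappa\le C(1+|x|)^{-\kappa N}$. The paper gets there by invoking \cite[Proposition 3.5]{nual:pard:88}, whose first term involves $|\mE u|^2$ rather than $\mE|u|^2$, and then applying H\"older once more inside the expectation using the exponent $\frac{\kappa q}{2q-\kappa}$ from Remark \ref{rem:comments_hyp_D}. Your Meyer-type inequality is an acceptable substitute and leads to the same place, but be precise about which estimate you are quoting and which target bound you are proving.
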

\begin{proof}
As was shown in the proof of Proposition \ref{prop:L_2_estimate_on_v_2}, we have the upper bound \eqref{eq:upper_bound_u} on $u$ and \eqref{eq:upper_bound__mall_der_u} on $D_r u$.
Thus by the H\"older inequality
\begin{eqnarray*}
&&\mE \int_0^t (t-s)^{-2\alpha} \left| \int_{\R^d} \Gamma(x,t,y,s)G(y,s) dy  \right|^2 ds  \\
&& \quad \leq \frac{C}{(1+|x|)^{2N}} \left(  \int_0^t (t-s)^{- \frac{\alpha q}{q-1}} ds \right)^{\frac{q-1}{q}} \left( \mE \int_0^t  \fG(s)^{2q} ds \right)^{\frac{1}{q}} < +\infty;
\end{eqnarray*}
here we have also used the inequality $\frac{\alpha q}{q-1} < 1$. Similarly,
\begin{eqnarray*}
&& \mE \int_0^t \int_0^t  (t-s)^{-2\alpha} \left| D_r u(x,t,s)  \right|^2 drds \\
&& \quad \leq \frac{C}{(1+|x|)^{2N}}   \left[\mE\left(  \int_0^t \fG(s)^{2q} ds \right)\right]^{\frac{1}{q}} \left[\mE \left(  \int_0^t \psi(r)^2 dr \right)^{\frac{q}{q-1}}\right]^{\frac{q-1}{q}}.
\end{eqnarray*}
Since $p\geq q/(q-1)$, by the Jensen inequality we derive that the process $u_\al$ is in $\bL^{1,2}_d$. Now, using \cite[Proposition 3.5]{nual:pard:88}, we have for any $ \kappa\geq 2$
\begin{eqnarray*}
&& \mE \left(  \left| \int_0^t \int_{\R^d} (t-s)^{-\alpha} \Gamma(x,t,y,s)G(y,s) dy dB_s \right|^{\kappa} \right) \\
&&\quad \leq c_{\kappa} \left( \int_0^t (t-s)^{-2\alpha} \left| \mE (u(x,t,s))  \right|^2 ds \right)^{\kappa/2} \\
&& \quad + c_{\kappa}\  \mE \left[ \left( \int_0^t \int_0^t  (t-s)^{-2\alpha} \left|  D_r u(x,t,s) \right|^2 drds\right)^{\kappa/2}\right].
\end{eqnarray*}
Combining this with the  previous inequalities we get
\begin{eqnarray*}
&& \mE \left( \left| \int_0^t \int_{\R^d} (t-s)^{-\alpha} \Gamma(x,t,y,s)G(y,s) dy dB_s \right|^{\kappa} \right)\\
&&\quad \leq  \frac{C}{(1+|x|)^{\kappa N}} \left( \mE \int_0^t  \fG(s)^{2q} ds \right)^{\frac{\kappa}{2q}} \\
&& \quad +\frac{C}{(1+|x|)^{\kappa N}} \mE \left[\left(  \int_0^t \fG(s)^{2q} ds \right)^{\frac{\kappa}{2q}}\left(  \int_0^t \psi(r)^2 dr \right)^{\frac{\kappa}{2}}\right]  \\
&&\quad \leq \frac{C}{(1+|x|)^{\kappa N}} \left[ \mE \int_0^T \fG(s)^{2q} ds \right]^{\frac{\kappa}{2q}} \left\{ 1 + \left[  \mE\left(  \int_0^T \psi(r)^2 dr \right)^{\frac{\kappa q}{2q-\kappa}} \right]^{\frac{2q-\kappa}{2q}}\right\}.
\end{eqnarray*}
This gives the conclusion of the lemma.
\end{proof}

In particular if $N>d/2$, the process $Y$ belongs to $L^\kappa([0,T]\times \R^d \times \Omega)$.
We use the semigroup property of the fundamental solution to derive the desired representation of $v$.
\begin{lemma}
For any $0 < \alpha < \frac{q-1}{q}$, $v(x,t)$ admits the following representation:
\begin{eqnarray}  \nonumber
v(x,t) & = & \frac{\sin(\pi \alpha)}{\pi} \int_0^t \int_{\R^d}(t-r)^{\alpha-1}\Gamma(x,t,z,r)(Y(z,r)+ X(z,r))  dz dr \\ \label{eq:v_representation}
& - &  \int_0^t \int_{\R^d} D_s \Gamma(x,t,y,s) G(y,s) dy ds,
\end{eqnarray}
where $X$ and $Y$ are given by \eqref{eq:def_X} and \eqref{eq:def_Y}.
\end{lemma}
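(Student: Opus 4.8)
The plan is to exploit the classical fractional-integral identity
$$\int_r^t (t-r)^{\alpha-1}(r-s)^{-\alpha}\,dr = \frac{\pi}{\sin(\pi\alpha)},\qquad 0<\alpha<1,$$
which is the analytic engine behind the so-called stochastic Fubini / factorization method (as in \cite{alos:leon:nual:99}, Section 3). First I would start from the definitions \eqref{eq:def_X} and \eqref{eq:def_Y} and write the inner double stochastic/Lebesgue integral defining $Y(z,r)+X(z,r)$ explicitly, then plug this into the right-hand side of \eqref{eq:v_representation}. The first term becomes a triple integral in $(r,s)$ and in the spatial variables involving $(t-r)^{\alpha-1}(r-s)^{-\alpha}\Gamma(x,t,z,r)$ multiplied either by $\Gamma(z,r,y,s)G(y,s)$ (coming from $Y$, under $dB_s$) or by $D_s\Gamma(z,r,y,s)G(y,s)$ (coming from $X$, under $ds$).

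The key algebraic step is to interchange the order of integration so that the $dr$-integral over $(s,t)$ is performed first: collapsing $\int_s^t(t-r)^{\alpha-1}(r-s)^{-\alpha}\int_{\R^d}\Gamma(x,t,z,r)\Gamma(z,r,y,s)\,dz\,dr$ using the Chapman–Kolmogorov (semigroup) property $\int_{\R^d}\Gamma(x,t,z,r)\Gamma(z,r,y,s)\,dz=\Gamma(x,t,y,s)$ together with the constancy of the above beta-type integral in $r$. This produces, for the $Y$-contribution, exactly $\int_0^t\int_{\R^d}\Gamma(x,t,y,s)G(y,s)\,dy\,dB_s=v(x,t)$. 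For the $X$-contribution the identical manipulation yields $\tfrac{\sin(\pi\alpha)}{\pi}\cdot\tfrac{\pi}{\sin(\pi\alpha)}\int_0^t\int_{\R^d}D_s\Gamma(x,t,y,s)G(y,s)\,dy\,ds$, which is precisely the term subtracted on the second line of \eqref{eq:v_representation}, so the $X$-terms cancel and the claimed identity follows.

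**Justification of the interchanges — the main obstacle.**
The delicate point is that $\Gamma(z,r,y,s)$ and $D_s\Gamma(z,r,y,s)$ are $\cF_r$-measurable, so the inner integral defining $Y$ is an anticipating (Skorohod) integral with respect to $dB_s$, and one cannot blindly apply a classical stochastic Fubini theorem. I would therefore invoke the anticipating stochastic-Fubini result for the Skorohod integral from \cite{nual:pard:88} (together with the $\bL^{1,2}_d$-membership already established in Lemma \ref{lem:auxillary_alpha_process} and Proposition \ref{prop:L_2_estimate_on_v_2}), checking that the relevant integrands lie in the appropriate $\bL^{1,2}$-type space uniformly, which is guaranteed by the Aronson-type bounds \eqref{eq:aronson_estim}, \eqref{eq:estim_mall_der_Gamma_bis}, \eqref{eq:estim_mall_space_der_Gamma_bis} and the integrability hypotheses \ref{D1}–\ref{D3} on $\fG$ and $\psi$. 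A further subtlety: when swapping the Skorohod integral past the $(t-r)^{\alpha-1}\,dr$ integration, an extra Malliavin-derivative (trace) term appears, precisely the term built from $D_s\Gamma$; tracking this term is the reason the $X$-process was introduced in the first place, and it is what makes the cancellation on the second line exact. The Lebesgue-integral interchanges (Fubini for the $ds\,dr$ and $dz$ integrals) are then routine, justified by the Gaussian bounds which give absolute integrability — near the diagonal the singularities $(t-r)^{\alpha-1}$, $(r-s)^{-\alpha}$ and $(r-s)^{-1/2}$ (from the $D_s\nabla$-type estimates, if needed) are integrable since $\alpha\in(0,1)$ and $\alpha<(q-1)/q$ keeps all exponents admissible. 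I expect the bulk of the verification to be this bookkeeping of measurability, the anticipating Fubini, and the trace term, rather than any genuinely new estimate.
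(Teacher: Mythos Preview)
Your approach is essentially the paper's proof run in reverse: the paper starts from $v$, inserts the fractional semigroup identity for $\Gamma$, applies the Skorohod product rule (\cite[Theorem~3.2]{nual:pard:88}) to pull $\Gamma(x,t,z,r)$ out of the $dB_s$-integral (producing the $Y$-term and a correction involving $D_s\Gamma(x,t,z,r)$), and then rewrites that correction via the Malliavin--Leibniz form of Chapman--Kolmogorov to extract the $X$-term and the subtracted Lebesgue integral. Your ingredients---beta identity, semigroup property, anticipating Fubini, trace correction---are the same.

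There is, however, a slip in your bookkeeping. You claim that the $X$-contribution alone, via ``the identical manipulation'', collapses to $\int_0^t\int_{\R^d}D_s\Gamma(x,t,y,s)G(y,s)\,dy\,ds$. It does not: the $X$-term carries the factor $\Gamma(x,t,z,r)\,D_s\Gamma(z,r,y,s)$, and Chapman--Kolmogorov is an identity for $\int_{\R^d}\Gamma(x,t,z,r)\Gamma(z,r,y,s)\,dz$, not for $\int_{\R^d}\Gamma(x,t,z,r)D_s\Gamma(z,r,y,s)\,dz$. What is actually needed is to \emph{combine} the $X$-term with the trace correction arising from the $Y$-manipulation, which carries the complementary factor $D_s\Gamma(x,t,z,r)\,\Gamma(z,r,y,s)$; only their sum collapses, through
\[
\int_{\R^d}\big[D_s\Gamma(x,t,z,r)\Gamma(z,r,y,s)+\Gamma(x,t,z,r)D_s\Gamma(z,r,y,s)\big]\,dz \;=\; D_s\Gamma(x,t,y,s),
\]
to the subtracted term. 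Once you pair the two pieces this way the cancellation is exact and your argument goes through; but as written, the sentence attributing the whole $D_s\Gamma(x,t,y,s)$ integral to the $X$-contribution is incorrect.
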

\begin{proof}
Recall that for any $\alpha \in (0,1)$,
$$\Gamma(x,t,y,s)= \frac{\sin(\pi \alpha)}{\pi} \int_s^t \int_{\R^d} (t-r)^{\alpha-1} (r-s)^{-\alpha} \Gamma(x,t,z,r)\Gamma(z,r,y,s) dz dr.$$
Applying Fubini's theorem for the Skorohod integral we obtain
\begin{eqnarray*}
&& v(x,t) = \int_0^t \int_{\R^d} \Gamma(x,t,y,s) G(y,s) dy dB_s \\
&& \quad = \frac{\sin(\pi \alpha)}{\pi} \int_0^t \int_{\R^d} \Bigg[ \int_0^r \int_{\R^d} (t-r)^{\alpha-1} \Gamma(x,t,z,r)  \\
&&\hspace{5cm}  (r-s)^{-\alpha} \Gamma(z,r,y,s) G(y,s) dy dB_s  \Bigg] dz dr.
\end{eqnarray*}
By Lemma \ref{lem:auxillary_alpha_process} with $0 < \alpha < \frac{q-1}{q}$ and $u_\al(r,x,s)\in \bL^{1,2}_d$, and by \cite[Theorem 3.2]{nual:pard:88} we have
\begin{eqnarray*}
&& \int_0^r \int_{\R^d} (t-r)^{\alpha-1}\Gamma(x,t,z,r)(r-s)^{-\alpha} \Gamma(z,r,y,s) G(y,s) dy dB_s \\
&&  \quad =(t-r)^{\alpha-1}\Gamma(x,t,z,r) Y(z,r) \\
&& \quad - \int_0^r \int_{\R^d}(t-r)^{\alpha-1} D_s \Gamma(x,t,z,r) (r-s)^{-\alpha} \Gamma(z,r,y,s) G(y,s) dy ds.
\end{eqnarray*}
Hence
\begin{eqnarray*}
&& v(x,t)  = \frac{\sin(\pi \alpha)}{\pi} \int_0^t \int_{\R^d}(t-r)^{\alpha-1}\Gamma(x,t,z,r) Y(z,r)  dz dr \\
&& \quad - \frac{\sin(\pi \alpha)}{\pi} \int_0^t \int_{\R^d}  (t-r)^{\alpha-1}  \Bigg[ \int_0^r\int_{\R^d} D_s \Gamma(x,t,z,r) \\
&& \hspace{5cm} (r-s)^{-\alpha} \Gamma(z,r,y,s) G(y,s) dy ds \Bigg] dz dr .
\end{eqnarray*}
Since for $0 \leq s < r < t \leq T$ we have
\begin{eqnarray*}
D_s \Gamma(x,t,y,s) & = & \int_{\R^d} \left[ D_s \Gamma(x,t,z,r)\Gamma(z,r,y,s) + \Gamma(x,t,z,r)D_s \Gamma(z,r,y,s) \right] dz ,
\end{eqnarray*}
then
\begin{eqnarray*}
&& v(x,t)= \frac{\sin(\pi \alpha)}{\pi} \int_0^t \int_{\R^d}(t-r)^{\alpha-1}\Gamma(x,t,z,r) (Y(r,z) + X(r,z))  dz dr \\
&& \ - \frac{\sin(\pi \alpha)}{\pi} \int_0^t  (t-r)^{\alpha-1}  \Bigg[ \int_0^r  \int_{\R^d}  (r-s)^{-\alpha}  D_s \Gamma(x,t,y,s) G(y,s) dy ds \Bigg] dr
\end{eqnarray*}
By the Fubini theorem we deduce the representation \eqref{eq:v_representation}.
\end{proof}


\subsubsection{Regularity of the process $v$}

Now we assume that $0< \al < \frac{q-1}{q}$ and study separately the three terms in the decomposition \eqref{eq:v_representation} of $v$. Let us begin with the last one, namely
$$I_3(x,t) =  \int_0^t \int_{\R^d} D_s \Gamma(x,t,y,s) G(y,s) dy ds.$$
Remark that $I_3$ is equal to $X$ with $\alpha = 0$. By Lemma \ref{lem:properties_of_X} with $\alpha=0$, a.s. the mapping $(x,t) \mapsto I_3(x,t)$ is continuous, $x \mapsto I_3(x,t)$ is differentiable, and
$$
\mE \left[ \sup_{x,t} \left( |I_3(x,t)|^{\frac{2pq}{p+q}}  + |\nabla I_3(x,t)|^{\frac{2pq}{p+q}}\right) \right]< +\infty.
$$
We proceed with the term $I_2$ given by
$$
I_2(x,t) = \int_0^{t} \int_{\R^d}(t-r)^{\alpha-1}\Gamma(x,t,z,r) X(z,r)  dz dr.
$$
Notice that for all  $p>q$ we have $ 1-\frac{p+q}{2pq} > (q-1)/q$.
Therefore, for $\alpha < \frac{q-1}{q}$ by Lemma \ref{lem:properties_of_X} we obtain
\begin{equation}\label{estiforx}
\mE \left(\sup_{x,t}  |X(x,t)|^{\frac{2pq}{p+q}} \right) < +\infty.
\end{equation}
Thus a.s. $X$ is bounded w.r.t. $(x,t)$. Arguing as in the proof of Lemma \ref{lem:properties_of_X}, we show that for all $\alpha$ such that $1/2 + \frac{p+q}{2pq} < \alpha<\frac{q-1}{q}$ the term $I_2$ has the same regularity as $I_3$ with
$$
\nabla I_2(x,t) = \int_0^{t} \int_{\R^d}(t-r)^{\alpha-1} \nabla \Gamma(x,t,z,r) X(z,r)  dz dr.
$$
Up to now the dimension $d$ plays no role in our estimate, and we only used \ref{D1}, \ref{D2} and the relation $p> q > 4$. To control $I_2$, we used the fact that $\sup_{x,t} |X(x,t)|$ is a.s. finite.  The estimate in the next statement does depend  on $d$. Remark that if $p>q > 2d+4$, then
$$\frac{1}{2} + \frac{(d+2)(p+q)}{4pq} < \frac{q-1}{q}.$$

\begin{lemma}
Assume that $\frac{1}{2} + \frac{(d+2)(p+q)}{4pq} < \alpha < \frac{q-1}{q}$. Then 
$$\mE \left[\sup_{x,t} | I_2(x,t) |^{\frac{2pq}{(p+q)}} + \sup_{x,t} |\nabla I_2(x,t) |^{\frac{2pq}{(p+q)}} \right] < +\infty.$$
\end{lemma}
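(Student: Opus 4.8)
The plan is to reduce both bounds to the spatial $L^\delta$-estimate on $X$ furnished by Lemma~\ref{lem:properties_of_X}, via a Hölder inequality in the integration variables $(z,r)$ combined with the Aronson bounds \eqref{eq:aronson_estim} on $\Gamma$ and \eqref{eq:aronson_estim_der} on $\nabla_x\Gamma$. Set $\delta=\frac{2pq}{p+q}$ (which is $\ge 2$ by \ref{D3} and Remark~\ref{rem:comments_hyp_D}) and let $\delta'=\frac{\delta}{\delta-1}$ be its conjugate exponent. Since $p>q$ one has $\alpha<\frac{q-1}{q}<1-\frac{p+q}{2pq}$, so the conclusions of Lemma~\ref{lem:properties_of_X} hold for the exponent $\delta$; in particular $\mE\int_0^T\int_{\R^d}|X(z,r)|^\delta\,dz\,dr<+\infty$, and — as recorded in the discussion preceding the statement — $I_2$ is already known to be continuous, $x\mapsto I_2(x,t)$ to be differentiable, with $\nabla I_2(x,t)=\int_0^t\int_{\R^d}(t-r)^{\alpha-1}\nabla_x\Gamma(x,t,z,r)X(z,r)\,dz\,dr$. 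So the only thing left is the quantitative $L^{2pq/(p+q)}$-bound with the sup inside.

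For $I_2$ itself I would first apply \eqref{eq:aronson_estim} and then Hölder's inequality with exponents $\delta'$ and $\delta$ on $(0,t)\times\R^d$, which gives
\begin{equation*}
|I_2(x,t)|\le\Big(\int_0^t\!\!\int_{\R^d}(t-r)^{(\alpha-1)\delta'}g_{\varsigma,\varpi}(x-z,t-r)^{\delta'}\,dz\,dr\Big)^{1/\delta'}\Big(\int_0^t\!\!\int_{\R^d}|X(z,r)|^\delta\,dz\,dr\Big)^{1/\delta}.
\end{equation*}
A Gaussian computation shows $\int_{\R^d}g_{\varsigma,\varpi}(x-z,\tau)^{\delta'}\,dz=C\,\tau^{-d(\delta'-1)/2}$, uniformly in $x$, so the first factor equals $C\big(\int_0^t(t-r)^{(\alpha-1)\delta'-d(\delta'-1)/2}\,dr\big)^{1/\delta'}$, which is bounded by a constant depending only on $T$, $d$, $\alpha$ and the ellipticity/Aronson constants as soon as the exponent of $(t-r)$ exceeds $-1$; a short manipulation shows this is equivalent to $\alpha>\frac{(d+2)(p+q)}{4pq}$, which is a fortiori true under our hypothesis. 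Hence $\sup_{x,t}|I_2(x,t)|\le C_T\big(\int_0^T\int_{\R^d}|X(z,r)|^\delta\,dz\,dr\big)^{1/\delta}$, and raising to the power $\delta$ and taking expectations yields $\mE\big[\sup_{x,t}|I_2(x,t)|^\delta\big]\le C_T^\delta\,\mE\int_0^T\int_{\R^d}|X(z,r)|^\delta\,dz\,dr<+\infty$ by Lemma~\ref{lem:properties_of_X}.

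The gradient is handled in exactly the same way, starting from the formula for $\nabla I_2$ above and from \eqref{eq:aronson_estim_der}, the only change being an extra weight $(t-r)^{-1/2}$ in the integrand; the Hölder split then requires $\int_0^t(t-r)^{(\alpha-3/2)\delta'-d(\delta'-1)/2}\,dr<+\infty$, and a computation shows that this is precisely the standing assumption $\alpha>\frac12+\frac{(d+2)(p+q)}{4pq}$. One then concludes as for $I_2$ that $\mE\big[\sup_{x,t}|\nabla I_2(x,t)|^{2pq/(p+q)}\big]<+\infty$. I expect the main work — beyond the routine Gaussian integrals — to be the exponent bookkeeping: checking that the stated lower bound on $\alpha$ is exactly what renders the singular time integral summable after the Hölder separation of $|X|^\delta$. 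The interchange of $\nabla_x$ with the $(z,r)$-integral is already available from the argument used in the proof of Lemma~\ref{lem:properties_of_X}, which rests on the continuity of $X$, its a.s. boundedness from \eqref{estiforx}, and the volume-potential regularity of Lemmata~\ref{lem:reg_volume_potential}--\ref{lem:reg_volume_potential_2}.
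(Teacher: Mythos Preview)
Your proof is correct and follows essentially the same approach as the paper: set $\delta=\frac{2pq}{p+q}$, apply H\"older's inequality to separate $|X|^\delta$ from the kernel, use the Aronson bounds \eqref{eq:aronson_estim}--\eqref{eq:aronson_estim_der} to reduce the kernel factor to a power of $(t-r)$, and check that the resulting time integral converges precisely under the stated lower bound on $\alpha$. The only cosmetic difference is that you apply H\"older jointly on $(0,t)\times\R^d$ whereas the paper applies it first in $z$ and then in $r$; both routes produce the same integrability condition $(\alpha-\tfrac32-\tfrac{d}{2\delta})\frac{\delta}{\delta-1}>-1$ for the gradient term and conclude via the $L^\delta$-bound on $X$ from Lemma~\ref{lem:properties_of_X}.
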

\begin{proof}
We only detail the arguments for the gradient of $I_2$ ; for $I_2$ itself they are similar. Note that $\frac{1}{2} + \frac{(d+2)(p+q)}{4pq} < \alpha$ is equivalent to $\left(\alpha-\frac{3}{2}-\frac{d}{2\delta} \right)\frac{\delta}{\delta-1} > -1$ with $\delta = \frac{2pq}{(p+q)}$. Thus by the H\"older inequality:
%
\begin{eqnarray*}
&& \left| \int_0^{t} \int_{\R^d}(t-r)^{\alpha-1} \nabla \Gamma(x,t,z,r)X(z,r)  dz dr \right| \\
&& \ \leq \int_0^{t} (t-r)^{\alpha-1} \left( \int_{\R^d} |\nabla \Gamma(x,t,z,r)|^{\frac{\delta}{\delta-1}} dz \right)^{\frac{\delta-1}{\delta}} \left( \int_{\R^d}  |X(z,r)|^{\delta} dz \right)^{\frac{1}{\delta}} dr.
\end{eqnarray*}
From Estimate \eqref{eq:aronson_estim_der}, we obtain
\begin{eqnarray*}
| \nabla_x \Gamma(x,t,y,r) |^{\frac{\delta}{\delta-1}} &\leq &(t-r)^{-\delta/2(\delta-1)} \ g_{\varrho,\varpi}(x-y,t-r)^{\frac{\delta}{\delta-1}}\\
& \leq & (t-r)^{-\frac{\delta}{2(\delta-1)} -\frac{d}{2}\frac{1}{\delta-1}} \ g_{\varrho',\varpi'}(x-y,t-r)
\end{eqnarray*}
with $(\varrho',\varpi') = (\varrho^{\frac{\delta}{\delta-1}},\varpi \frac{\delta}{\delta-1})$. This yields
$$
(t-r)^{\alpha-1} \left( \int_{\R^d} |\nabla \Gamma(x,t,z,r)|^{\frac{\delta}{\delta-1}} dz\right)^{\frac{\delta-1}{\delta}}  \leq C(t-r)^{\alpha-1} (t-r)^{-\frac{1}{2} -\frac{d}{2\delta}} .
$$
Using again the H\"older inequality we arrive at the estimate
\begin{eqnarray*}
&&\int_0^{t} (t-r)^{\alpha-1} \left( \int_{\R^d} |\nabla \Gamma(x,t,z,r)|^{\frac{\delta}{\delta-1}} dz \right)^{\frac{\delta-1}{\delta}} \left( \int_{\R^d}  |X(z,r)|^{\delta} dz \right)^{\frac{1}{\delta}} dr \\
&& \  \leq C \left(  \int_0^{t} (t-r)^{(\alpha-\frac{3}{2}-\frac{d}{2\delta})\frac{\delta}{\delta-1}} dr \right)^{\frac{\delta-1}{\delta}} \left( \int_0^{t} \int_{\R^d}  |X(z,r)|^{\delta} dz dr \right)^{\frac{1}{\delta}} \\
&& \  \leq C  \left( \int_0^{T} \int_{\R^d}  |X(z,r)|^{\delta} dz dr \right)^{\frac{1}{\delta}}.
\end{eqnarray*}
Thereby
$$
 \sup_{x,t} \left| \nabla I_2(x,t) \right|^{\frac{2pq}{(p+q)}}  \leq C  \left( \int_0^{T} \int_{\R^d}  |X(z,r)|^{\frac{2pq}{(p+q)}} dz dr \right).
 $$
Taking the expectation and considering \eqref{estiforx} we obtain the desired statement.
\end{proof}

It remains to estimate the term $I_1$ in decomposition \eqref{eq:v_representation}. It reads
$$
I_1(x,t)= \int_0^t \int_{\R^d}(t-r)^{\alpha-1}\Gamma(x,t,z,r) Y(z,r)  dz dr
$$
with $Y$ given by \eqref{eq:def_Y}. Note that we are not able to obtain boundedness of $Y$; to do so we would have to exchange the expectation and the supremum for an anticipating stochastic integral. Recall that according to  \ref{D3} we have $2pq/(p+q) > 2d+4$. Hence the constant $\kappa$ in Lemma \ref{lem:auxillary_alpha_process} can be chosen in such a way that $2<\kappa<2d+4$. Since $Y$ is not bounded, we will apply Lemma \ref{lem:reg_volume_potential_2}. Denote
\begin{eqnarray*}
Z(x,t) &=& \int_0^{t} \int_{\R^d}(t-r)^{\alpha-1}\Gamma(x,t,u,r) |Y(u,r)|^\delta  du dr,\\
\widehat Z(x,t) & = & \int_0^{t} \int_{\R^d}(t-r)^{\alpha-1} \nabla \Gamma(x,t,u,r) |Y(u,r)|^\delta  du dr.
\end{eqnarray*}
\begin{lemma}\label{lem:control_Z}
For any $\frac{1}{2} + \frac{(d+2)(p+q)}{4pq} < \alpha < \frac{q-1}{q}$, there exists $1 < \delta < \frac{2pq}{(p+q)}$ such that
$$\mE \left[  \sup_{x,t} \left( Z(x,t)\right)^{\frac{2pq}{(p+q)\delta}}  +  \sup_{x,t} \left( \widehat Z(x,t)\right)^{\frac{2pq}{(p+q)\delta}}  \right] \leq  C  \mE \int_0^{T} \int_{\R^d}  |Y(z,r)|^{\frac{2pq}{(p+q)}}  dz dr.$$
\end{lemma}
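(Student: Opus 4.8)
The plan is to read off Lemma~\ref{lem:control_Z} from the volume--potential regularity result Lemma~\ref{lem:reg_volume_potential_2} applied to the (random) source term $f(z,r)=|Y(z,r)|^{\delta}$. Write $m=\frac{2pq}{p+q}$, so that by \ref{D3} we have $m>2d+4$, and note that the exponent in the claim is $\frac{2pq}{(p+q)\delta}=m/\delta$. We will pick $\delta\in(1,m)$ close enough to $1$; for such a $\delta$ set $\sigma=m/\delta\in(1,m)$ and let $\sigma'=\sigma/(\sigma-1)$ be its conjugate exponent.

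First I would record the two kernel bounds coming from the Aronson estimates \eqref{eq:aronson_estim} and \eqref{eq:aronson_estim_der}: integrating the Gaussians over $u$,
$$\Big(\int_{\R^d}\Gamma(x,t,u,r)^{\sigma'}\,du\Big)^{1/\sigma'}\le C\,(t-r)^{-\frac{d}{2\sigma}},\qquad \Big(\int_{\R^d}|\nabla_x\Gamma(x,t,u,r)|^{\sigma'}\,du\Big)^{1/\sigma'}\le C\,(t-r)^{-\frac12-\frac{d}{2\sigma}}.$$
Then, applying the H\"older inequality first in $u$ and then in $r$ (with exponents $\sigma$ and $\sigma'$) and using $\delta\sigma=m$, one obtains for every $(x,t)$
$$Z(x,t)\le C\Big(\int_0^t(t-r)^{(\alpha-1-\frac{d}{2\sigma})\sigma'}dr\Big)^{1/\sigma'}\Big(\int_0^T\!\!\int_{\R^d}|Y(z,r)|^{m}\,dz\,dr\Big)^{1/\sigma},$$
and the analogous bound for $\widehat Z(x,t)$ with $\alpha-1$ replaced by $\alpha-\tfrac32$. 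The $r$-integral for $\widehat Z$ — the more singular of the two — is finite exactly when $(\alpha-\tfrac32-\tfrac{d}{2\sigma})\sigma'>-1$, that is $\alpha>\tfrac12+\tfrac{d+2}{2\sigma}$, and in that case it is bounded uniformly in $t\in[0,T]$ by a power of $T$.

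It remains to produce such a $\delta$. Since $\frac{(d+2)(p+q)}{4pq}=\frac{d+2}{2m}$, the hypothesis reads $\alpha>\tfrac12+\tfrac{d+2}{2m}$ with strict inequality, while $\tfrac{d+2}{2\sigma}=\tfrac{\delta(d+2)}{2m}\to\tfrac{d+2}{2m}$ as $\delta\downarrow1$; hence by continuity there is $\delta>1$ — automatically $\delta<m$ because $m>2d+4>2$ — for which $\alpha>\tfrac12+\tfrac{d+2}{2\sigma}$ holds. Fixing this $\delta$, both $Z(x,t)$ and $\widehat Z(x,t)$ are bounded, uniformly in $(x,t)$, by $C\big(\int_0^T\!\int_{\R^d}|Y(z,r)|^{m}\,dz\,dr\big)^{1/\sigma}$; raising to the power $\sigma=\frac{2pq}{(p+q)\delta}$ and taking expectations yields the claimed inequality, whose right-hand side is finite by Lemma~\ref{lem:auxillary_alpha_process} (with $\kappa=m$) together with the remark that $Y\in L^{m}([0,T]\times\R^d\times\Omega)$ when $N>d/2$.

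The only genuinely delicate point is the bookkeeping of exponents: $\delta$ has to be strictly larger than $1$ (this is what will make the factor carrying $\Gamma$ integrable when $\Gamma Y$ is split in the subsequent use of this lemma, and it is why the statement is phrased with $1<\delta$), yet small enough that the supercriticality threshold $\alpha>\tfrac12+\tfrac{d+2}{2\sigma}$ is met — and it is precisely here that the strict inequality in the hypothesis and the dimensional restriction $\frac{2pq}{p+q}>2d+4$ from \ref{D3} enter. Everything else is a routine consequence of the Aronson bounds already established; indeed the double-H\"older computation above is exactly the content of Lemma~\ref{lem:reg_volume_potential_2}, which may be invoked directly.
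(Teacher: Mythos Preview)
Your proof is correct and follows essentially the same route as the paper: a double H\"older inequality (first in the spatial variable, then in time) with exponents $\sigma=m/\delta$ and $\sigma'$, combined with the Aronson bounds \eqref{eq:aronson_estim}--\eqref{eq:aronson_estim_der}, leading to the threshold $\alpha>\tfrac12+\tfrac{d+2}{2\sigma}$ and the choice of $\delta$ slightly above $1$. The paper uses the letter $\theta$ for your $\sigma$ but the computation is identical; the only cosmetic difference is that the reference to Lemma~\ref{lem:reg_volume_potential_2} is not needed here (that lemma concerns continuity, whereas the quantitative bound is obtained directly, as you in fact do).
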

\begin{proof}
Choose $1 < \delta < \frac{2pq}{(p+q)}$ and $\theta =   \frac{2pq}{(p+q)\delta}> 1$. Then
\begin{eqnarray*}
&& \int_0^{t} \int_{\R^d}(t-r)^{\alpha-1}\Gamma(x,t,z,r) |Y(z,r)|^\delta  dz dr \\
&& \ \  \leq \int_0^{t} (t-r)^{\alpha-1} \left( \int_{\R^d} \Gamma(x,t,z,r)^{\frac{\theta}{\theta-1}} dz \right)^{\frac{\theta-1}{\theta}} \left( \int_{\R^d}  |Y(z,r)|^{\frac{2pq}{(p+q)}}dz \right)^{\frac{1}{\theta}} dr
\end{eqnarray*}
Due to the Aronson estimate the right-hand side here admits the following upper bound:
\begin{eqnarray*}
&&\int_0^{t} (t-r)^{\alpha-1} \left( \int_{\R^d} \Gamma(x,t,z,r)^{\frac{\theta}{\theta-1}} dz \right)^{\frac{\theta-1}{\theta}} \left( \int_{\R^d}  |Y(z,r)|^{\frac{2pq}{(p+q)}}dz \right)^{\frac{1}{\theta}} dr\\
&&\quad \leq C \int_0^{t} (t-r)^{\alpha-1-\frac{d}{2\theta}}  \left( \int_{\R^d}  |Y(z,r)|^{  \frac{2pq}{(p+q)}} dz \right)^{\frac{1}{\theta}} dr \\
&& \quad  \leq C \left(  \int_0^{t} (t-r)^{(\alpha-1-\frac{d}{2\theta})\frac{\theta}{\theta-1}} dr \right)^{\frac{\theta-1}{\theta}} \left( \int_0^{t} \int_{\R^d}  |Y(z,r)|^{ \frac{2pq}{(p+q)}} dz dr \right)^{\frac{1}{\theta}} \\
&& \quad  \leq C  \left( \int_0^{T} \int_{\R^d}  |Y(z,r)|^{ \frac{2pq}{(p+q)}} dz dr \right)^{\frac{1}{\theta}};
\end{eqnarray*}
here the latter inequality holds if
$\left(\alpha-1-\frac{d}{2\theta} \right)\frac{\theta}{\theta-1} > -1$, or equivalently $\alpha > \frac{d+2}{2\theta} = \frac{(d+2)(p+q)}{4pq} \delta$.\\
The computations similar to those in the proof of the previous lemma yield
$$
\int_0^{t} \int_{\R^d}(t-r)^{\alpha-1} \nabla \Gamma(x,t,z,r) |Y(z,r)|^\delta  dz dr
\leq C  \left( \int_0^{T} \int_{\R^d}  |Y(z,r)|^{ \frac{2pq}{(p+q)}} dz dr \right)^{\frac{1}{\theta}},
$$
if $\left(\alpha-1-\frac{1}{2}-\frac{d}{2\theta} \right)\frac{\theta}{\theta-1} > -1$, or equivalently $\alpha > \frac{1}{2} + \frac{d+2}{2\theta} = \frac{1}{2} + \frac{(d+2)(p+q)}{4pq} \delta$.

\end{proof}

From Lemmata \ref{lem:control_Z} and \ref{lem:reg_volume_potential_2} it follows that $I_1$ is a.s. continuous w.r.t. $(x,t)$ and differentiable w.r.t. $x$. Arguing as in the proof of the above lemma, we obtain
$$
\mE \left[\sup_{x,t}  \left( | I_1(x,t) |^{\frac{2pq}{p+q} } +  |\nabla I_1(x,t) |^{\frac{2pq}{p+q} } \right) \right] < +\infty.
$$
Furthermore, a careful examination of our proofs shows that there exists $\eta >0$ such that for any $h>0$
$$\mE \Big[ \sup_{x, 0\leq t\leq h}   | v(x,t) |^{\frac{2pq}{p+q}} \Big] \leq Ch^\eta.$$
This implies that a.s. for any $x \in \R^d$, $v(x,t)$ tends to zero as $t$ goes to zero.

To complete the proof of Theorem \ref{thm:reg_mild_sol} consider a function $\phi \in C^\infty_0(\R^d)$ and
$$J(t) =  \int_{\R^d} v(x,t) \phi(x) dx + \int_0^t \int_{\R^d} \rma(x,u) \nabla v(x,u) \nabla \phi(x) dx du .$$
By the previous Lemmata, $J(t)$ is well defined on $[0,T]$ with
\begin{eqnarray*}
J(t)& = &\int_{\R^d} \left(\int_0^t   \int_{\R^d} \Gamma(x,t,y,s) G(y,s) dy dB_s \right) \phi(x) dx   \\
& + & \int_0^t \int_{\R^d} \rma(x,u)  \left( \int_0^u \int_{\R^d} \nabla \Gamma(x,u,y,s) G(y,s) dy dB_s \right) \nabla \phi(x) dx du.
\end{eqnarray*}
By the Fubini theorem
\begin{eqnarray*}
J(t)& = &\int_0^t  \int_{\R^d} \left(\int_{\R^d} \Gamma(x,t,y,s)  \phi(x) dx \right) G(y,s) dy dB_s \\
& + & \int_0^t \int_{\R^d} \left( \int_s^t \int_{\R^d} \nabla \Gamma(x,u,y,s) a(x,u) \nabla \phi(x) dx du \right) G(y,s) dy dB_s \\
& = & \int_0^t  \int_{\R^d}   \phi(y) G(y,s) dy dB_s,
\end{eqnarray*}
since $\Gamma$ is the fundamental solution of \eqref{eq:PDE_fund_sol}.

\section*{Appendix}

\setcounter{lemmaA}{0}
\renewcommand{\thelemmaA}{A.\arabic{lemmaA}}

Recall that
$$V(x,t) = \int_0^t \int_{\R^d} \Gamma(x,t,y,s) f(y,s) dy ds$$
is the volume potential of $f$ (see \cite[Section I.3]{frie:64}). Here we give some results concerning the regularity of $V$. The first lemma is closely related to Lemma I.3.1 and Theorem I.3.3 of \cite{frie:64} and Theorem 1 of \cite{ilyi:kala:oley:02}.
\begin{lemmaA} \label{lem:reg_volume_potential}
Assume that $f$ is a bounded measurable function. Then $V$ is continuous w.r.t. $(x,t) \in \R^d \times (0,+\infty)$ and has first continuous derivatives w.r.t. $x$. Moreover, for any $t > 0$ and $x\in \R^d$,
$$\frac{\partial V}{\partial x_i} (x,t) = \int_0^t \int_{\R^d} \frac{\partial }{\partial x_i} \Gamma(x,t,y,s) f(y,s) dy ds.$$
\end{lemmaA}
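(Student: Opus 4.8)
The plan is to run the classical potential-theoretic argument, relying only on the Aronson bounds \eqref{eq:aronson_estim} and \eqref{eq:aronson_estim_der} together with the continuity of $\Gamma$ and of $\nabla_x\Gamma$ on $\{0\le s<t\le T\}$, which follows from the parametrix construction of Section \ref{sect:deriv_fund_sol}. First I would record that the two integrals in question converge absolutely. Since $\int_{\R^d}g_{c,C}(z,\tau)\,dz=c(\pi/C)^{d/2}$ does not depend on $\tau>0$, estimate \eqref{eq:aronson_estim} gives $\int_{\R^d}\Gamma(x,t,y,s)\,|f(y,s)|\,dy\le C\|f\|_\infty$ uniformly in $x$ and $0\le s<t$, so $V$ is well defined and bounded on $\R^d\times[0,T]$; likewise \eqref{eq:aronson_estim_der} yields $\int_{\R^d}|\partial_{x_i}\Gamma(x,t,y,s)|\,|f(y,s)|\,dy\le C\|f\|_\infty (t-s)^{-1/2}$, which is integrable in $s$ on $[0,t]$, so the candidate derivative
\[
W_i(x,t):=\int_0^t\!\int_{\R^d}\partial_{x_i}\Gamma(x,t,y,s)\,f(y,s)\,dy\,ds
\]
is well defined and locally bounded as well.

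Next I would prove the continuity of $V$ (and, by the same scheme, of $W_i$) on $\R^d\times(0,+\infty)$. Fixing $(x_0,t_0)$ with $t_0>0$ and $\eta>0$, I would choose $\varepsilon\in(0,t_0/2)$ so small that, for $(x,t)$ close to $(x_0,t_0)$, the contribution of the strip $s\in(t_0-\varepsilon,t)$ is at most $\eta$: in the $V$-case it is bounded by $C\|f\|_\infty(t-t_0+\varepsilon)$, in the $W_i$-case by $C\|f\|_\infty\sqrt{t-t_0+\varepsilon}$ (using the integrable singularity $(t-s)^{-1/2}$). On the complementary range $s\in[0,t_0-\varepsilon]$ the gap $t-s$ stays bounded below, $\Gamma$ and $\partial_{x_i}\Gamma$ are continuous in $(x,t)$ there, and by \eqref{eq:aronson_estim}--\eqref{eq:aronson_estim_der} they are dominated, uniformly in $s$ and for $x$ in a bounded neighbourhood of $x_0$, by a fixed integrable function of $y$; dominated convergence then handles this part.

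To identify the derivative I would use that $x\mapsto\Gamma(x,t,y,s)$ is $C^1$ (the formula \eqref{eq:def_fund_sol_param_method_deriv} exhibits a continuous gradient), write $\Gamma(x_0+he_i,t,y,s)-\Gamma(x_0,t,y,s)=\int_0^h\partial_{x_i}\Gamma(x_0+\theta e_i,t,y,s)\,d\theta$, and apply Fubini --- legitimate since the integrand is dominated by $C\|f\|_\infty(t-s)^{-1/2}g_{\varrho,\varpi}(x_0+\theta e_i-y,t-s)$, integrable over $(\theta,y,s)\in[0,h]\times\R^d\times[0,t]$ --- to get
\[
\frac{V(x_0+he_i,t)-V(x_0,t)}{h}=\frac1h\int_0^h W_i(x_0+\theta e_i,t)\,d\theta .
\]
Letting $h\to 0$ and using the continuity of $W_i$ just established gives $\partial_{x_i}V(x_0,t)=W_i(x_0,t)$ and shows this partial derivative is continuous in $(x,t)$.

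The step I expect to be the most delicate is the continuity argument, because the singular region $s\approx t$ of the integrand moves together with the upper limit $t$, so one cannot apply dominated convergence on a fixed domain of integration. What makes it go through is precisely that \eqref{eq:aronson_estim} and \eqref{eq:aronson_estim_der} provide $y$-domination that is uniform in $s$ and integrable in $s$ up to $s=t$ (with the harmless singularity $(t-s)^{-1/2}$ in the derivative case), so the contribution of the moving strip tends to $0$ uniformly and the rest is routine. This is the argument of Lemma I.3.1 and Theorem I.3.3 in \cite{frie:64}, here carried out under the weaker regularity hypotheses on $\rma$ imposed in this paper.
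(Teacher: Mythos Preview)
Your argument is correct and follows essentially the same route as the paper: both proofs bound the inner integral via the Aronson estimates \eqref{eq:aronson_estim}--\eqref{eq:aronson_estim_der} to get $|J(x,t,s)|\le C$ and $|\partial_{x_i}J(x,t,s)|\le C(t-s)^{-1/2}$, then use that the tail $\int_{t-\varepsilon}^t$ is uniformly small while the truncated integral over $[0,t-\varepsilon]$ is handled by continuity of $\Gamma$ and $\nabla_x\Gamma$ away from the diagonal. Your treatment is a bit more explicit (the strip decomposition for continuity, and the mean-value/Fubini identification of the derivative in place of the paper's appeal to uniform convergence of $\int_0^t\partial_{x_i}J\,ds$), but these are the same ideas spelled out in more detail.
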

\begin{proof}
Fix some $x \in \R^d$ and $t > 0$ and consider
$$J(x,t,s) = \int_{\R^d} \Gamma(x,t,y,s) f(y,s) dy.$$
This function is continuous with respect to all its arguments $x \in \R^d$ and $0 \leq s < t$. Moreover,  by \eqref{eq:aronson_estim}
$$
|J(x,t,s)| \leq  \int_{\R^d} g_{\varrho,\varpi}(x-y,t-s) |f(y,s)|  dy \leq C.
$$
Since the function $\int_0^{t-\varepsilon} J(x,t,s) ds$ is continuous for any sufficiently small $\varepsilon>0$,  this implies the required
continuity of  $V$. For the derivatives, let us consider
$$J(x,t,s) = \int_{\R^d} \Gamma(x,t,y,s) g(y,s) dy.$$
For any $s<t$, it holds
$$\frac{\partial J}{\partial x_i} (x,t,s) = \int_{\R^d} \frac{\partial }{\partial x_i} \Gamma(x,t,y,s) g(y,s) dy.$$
Now using \eqref{eq:aronson_estim_der}, we have
$$ \left| \frac{\partial J}{\partial x_i} (x,t,s) \right| \leq (t-s)^{-1/2} \int_{\R^d} g_{\varrho,\varpi}(x-y,t-s) |g(y,s)| dy \leq C(t-s)^{-1/2}.$$
Therefore the integral
$$\int_0^t \frac{\partial J}{\partial x_i} (x,t,s) ds$$
converges uniformly with respect to $x$ and $t>0$. It follows that for $t > 0$ and any $x$, the derivatives
$$\frac{\partial V}{\partial x_i} (x,t) = \int_0^t \frac{\partial J}{\partial x_i} (x,t,s) ds$$
exist and are continuous.
\end{proof}

Let us give another version of these results.
\begin{lemmaA}\label{lem:reg_volume_potential_2}
Let $g$ be a measurable function such that for some $q > 1$ there exists a constant $K \geq 0$ such that for any $(x,t) \in \R^d \times (0,+\infty)$
$$\int_0^t \int_{\R^d} \left[ \Gamma(x,t,y,s) + \left| \nabla \Gamma(x,t,y,s) \right| \right]  |g(y,s)|^q dy ds \leq K.$$
Then $V$ is continuous w.r.t. $(x,t) \in \R^d \times (0,+\infty)$ and has first continuous derivatives w.r.t. $x$. Moreover, for any $t > 0$ and $x\in \R^d$,
$$\frac{\partial V}{\partial x_i} (x,t) = \int_0^t \int_{\R^d} \frac{\partial }{\partial x_i} \Gamma(x,t,y,s) g(y,s) dy ds.$$
\end{lemmaA}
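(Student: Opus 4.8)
The plan is to follow the proof of Lemma~\ref{lem:reg_volume_potential}, replacing the sup-norm of $f$ by Hölder's inequality against the weight $|g(y,s)|^{q}$. Put $q'=q/(q-1)$ and, for $0\le s<t$, $J(x,t,s)=\int_{\R^{d}}\Gamma(x,t,y,s)g(y,s)\,dy$. Splitting $\Gamma\,|g|=(\Gamma\,|g|^{q})^{1/q}\Gamma^{1/q'}$ and using $\int_{\R^{d}}\Gamma(x,t,y,s)\,dy\le\int_{\R^{d}}g_{\varsigma,\varpi}(x-y,t-s)\,dy=C$ (a constant, by \eqref{eq:aronson_estim}), Hölder's inequality gives
$$
|J(x,t,s)|\ \le\ C\Big(\int_{\R^{d}}\Gamma(x,t,y,s)\,|g(y,s)|^{q}\,dy\Big)^{1/q}\ =:\ C\,H(x,t,s)^{1/q}.
$$
The hypothesis forces $\int_{0}^{t}H(x,t,s)\,ds\le K$, so by Fubini $H(x,t,s)<\infty$ — hence $J(x,t,s)$ is well defined — for a.e.\ $s$, and for every measurable $A\subseteq(0,t)$ one more Hölder inequality in $s$ gives $\int_{A}|J(x,t,s)|\,ds\le CK^{1/q}|A|^{1/q'}$; in particular $V(x,t)=\int_{0}^{t}J(x,t,s)\,ds$ is well defined. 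The same computation with $\nabla_{x}\Gamma$, using $\int_{\R^{d}}|\nabla_{x}\Gamma(x,t,y,s)|\,dy\le C(t-s)^{-1/2}$ from \eqref{eq:aronson_estim_der}, yields $\big|\int_{\R^{d}}\partial_{x_{i}}\Gamma(x,t,y,s)g(y,s)\,dy\big|\le C(t-s)^{-1/(2q')}\big(\int_{\R^{d}}|\nabla_{x}\Gamma|\,|g|^{q}\,dy\big)^{1/q}$ and hence $\int_{A}\big|\int_{\R^{d}}\partial_{x_{i}}\Gamma(x,t,y,s)g(y,s)\,dy\big|\,ds\le CK^{1/q}|A|^{1/(2q')}$. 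Note that these bounds are uniform in $(x,t)$ and tend to $0$ as $|A|\to0$.

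To prove that $V$ is continuous at a point $(x_{0},t_{0})$ with $t_{0}>0$, fix $\eps\in(0,t_{0})$, restrict to $(x,t)$ with $|t-t_{0}|<\eps/2$, put $\Gamma_{0}(y,s)=\Gamma(x_{0},t_{0},y,s)$, and decompose
$$
V(x,t)-V(x_{0},t_{0})=\int_{0}^{t_{0}-\eps}\!\big(J(x,t,s)-J(x_{0},t_{0},s)\big)ds+\int_{t_{0}-\eps}^{t}\!J(x,t,s)\,ds-\int_{t_{0}-\eps}^{t_{0}}\!J(x_{0},t_{0},s)\,ds.
$$
By the first paragraph the last two integrals are $O(\eps^{1/q'})$, uniformly in $(x,t)$. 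For the first, split $|\Gamma-\Gamma_{0}|\,|g|=(|\Gamma-\Gamma_{0}|\,|g|^{q})^{1/q}|\Gamma-\Gamma_{0}|^{1/q'}$ and apply Hölder in $y$ and then in $s$:
$$
\int_{0}^{t_{0}-\eps}\!\!|J(x,t,s)-J(x_{0},t_{0},s)|\,ds\ \le\ \Big(\int_{0}^{t_{0}-\eps}\!\!\!\int_{\R^{d}}(\Gamma+\Gamma_{0})\,|g|^{q}\,dy\,ds\Big)^{1/q}\Big(\int_{0}^{t_{0}-\eps}\!\!\!\int_{\R^{d}}|\Gamma-\Gamma_{0}|\,dy\,ds\Big)^{1/q'}.
$$
The first factor is $\le(2K)^{1/q}$ since the hypothesis holds at \emph{both} $(x,t)$ and $(x_{0},t_{0})$ and $t_{0}-\eps<t$; the second tends to $0$ as $(x,t)\to(x_{0},t_{0})$, because for each fixed $s<t_{0}$ one has $\int_{\R^{d}}|\Gamma(x,t,y,s)-\Gamma(x_{0},t_{0},y,s)|\,dy\to0$ by dominated convergence — the kernel being continuous on $\{s<t\}$ and, on a neighbourhood of $(x_{0},t_{0})$, bounded by a fixed integrable function of $y$ thanks to \eqref{eq:aronson_estim} — and because this quantity stays $\le2C$ for $s<t_{0}-\eps$, so a second dominated-convergence in $s$ applies. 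Letting $(x,t)\to(x_{0},t_{0})$ and then $\eps\to0$ proves continuity of $V$ on $\R^{d}\times(0,+\infty)$.

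The same scheme shows that $(x,t)\mapsto\int_{0}^{t}\partial_{x_{i}}J(x,t,s)\,ds$, with $\partial_{x_{i}}J(x,t,s)=\int_{\R^{d}}\partial_{x_{i}}\Gamma(x,t,y,s)g(y,s)\,dy$, is continuous on $\R^{d}\times(0,+\infty)$: the tail terms become $O(\eps^{1/(2q')})$, and for fixed $s<t_{0}$ one still has $\int_{\R^{d}}|\partial_{x_{i}}\Gamma(x,t,y,s)-\partial_{x_{i}}\Gamma(x_{0},t_{0},y,s)|\,dy\to0$ by dominated convergence, now with the locally integrable majorant coming from \eqref{eq:aronson_estim_der}. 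Finally, for a.e.\ $s$ the map $x\mapsto J(x,t,s)$ is of class $C^{1}$ with precisely that derivative — differentiation under $\int dy$ being legitimate from the Gaussian bound on $\nabla_{x}\Gamma$, locally uniform in $x$, exactly as in Lemma~\ref{lem:reg_volume_potential}. Writing the difference quotient $h^{-1}\big(V(x+he_{i},t)-V(x,t)\big)$ as the average of $\partial_{x_{i}}J(x+\tau e_{i},t,s)$ over $\tau\in(0,h)$ and $s\in(0,t)$ — licit because $\int_{0}^{t}|\partial_{x_{i}}J(x',t,s)|\,ds\le CK^{1/q}(2\sqrt{t}\,)^{1/q'}$ uniformly in $x'$, so that Fubini applies — and letting $h\to0$ by the continuity just established, one obtains $\partial_{x_{i}}V(x,t)=\int_{0}^{t}\int_{\R^{d}}\partial_{x_{i}}\Gamma(x,t,y,s)g(y,s)\,dy\,ds$, which is therefore continuous.

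The step requiring the most care — and where the proof genuinely departs from Lemma~\ref{lem:reg_volume_potential} — is the continuity of the bulk term $\int_{0}^{t_{0}-\eps}(J(x,t,s)-J(x_{0},t_{0},s))\,ds$: a direct dominated-convergence argument in $s$ is unavailable, since $g$ is controlled only through the weighted integral $\int\Gamma\,|g|^{q}$ and Gaussian bounds do not survive a change of exponential rate, so no single $s$-integrable majorant is stable under the perturbation $(x_{0},t_{0})\to(x,t)$. The device above circumvents this by splitting $\Gamma\,|g|$ unevenly — $|g|^{q}$ inside one factor, a bare power of $\Gamma$ inside the other — and feeding the weighted part through the hypothesis at \emph{both} endpoints, so that only the unweighted convergence $\int_{\R^{d}}|\Gamma-\Gamma_{0}|\,dy\to0$, a soft consequence of pointwise convergence, needs to be shown; the near-singularity contributions are harmless, being absorbed into the $O(\eps^{1/q'})$ and $O(\eps^{1/(2q')})$ tail bounds.
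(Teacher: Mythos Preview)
Your proof is correct and follows the same core idea as the paper: apply H\"older's inequality with exponents $q$ and $q'=q/(q-1)$ against the weight $\Gamma$ (respectively $|\nabla\Gamma|$), so that the $|g|^{q}$ factor is controlled by the hypothesis and the remaining factor is bounded by $C$ (respectively $C(t-s)^{-1/2}$) via the Aronson estimates. The paper's own proof stops essentially at your first paragraph and then declares ``This implies the uniform convergence of the integral $\int_0^t J(x,t,s)\,ds$ \dots\ The rest of the proof is exactly the same as in the previous lemma.''

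Where you add genuine content is in the continuity argument. The paper's appeal to Lemma~A.1 is cursory: in that lemma the integrand $J(x,t,s)$ is bounded uniformly in $s$, while here the pointwise bound is $C\,H(x,t,s)^{1/q}$ with only $\int_0^t H\,ds\le K$, so a single dominated majorant in $s$ is not immediate. Your $\eps$-splitting plus the second H\"older-in-$s$ step, together with the device of splitting $|\Gamma-\Gamma_0|\,|g|$ unevenly so that the $|g|^{q}$-weighted factor is controlled at \emph{both} $(x,t)$ and $(x_0,t_0)$ while only the unweighted $\int|\Gamma-\Gamma_0|\,dy$ has to converge, fills this gap cleanly. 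This is a more careful version of the same argument rather than a different route.

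One minor remark: your tail bound $\int_A|\partial_{x_i}J|\,ds\le CK^{1/q}|A|^{1/(2q')}$ relies on $\int_A(t-s)^{-1/2}\,ds\le 2|A|^{1/2}$, which holds for general measurable $A\subset[0,t]$ by rearrangement (the worst case being $A=[t-|A|,t]$). Since you only use intervals this is already clear, but it is worth noting that the bound is indeed uniform in the shape of $A$.
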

\begin{proof}
By the H\"older inequality
$$|J(x,t,s)| \leq \left(  \int_{\R^d} \Gamma(x,t,y,s) dy \right)^{\frac{q-1}{q}} \left(  \int_{\R^d} \Gamma(x,t,y,s) |g(y,s)|^q dy \right)^{\frac{1}{q}}.$$
This implies the uniform convergence of the integral $\int_0^t J(x,t,s) ds$ w.r.t. $x$ and $t> 0$. Therefore, $V$ is continuous for $t>0$.  For the derivative, the same arguments give:
\begin{eqnarray*}
&& \left| \frac{\partial J}{\partial x_i} (x,t,s) \right| \\
&& \quad \leq \left(  \int_{\R^d} \left|\frac{\partial }{\partial x_i}\Gamma(x,t,y,s) \right|dy \right)^{\frac{q-1}{q}} \left(  \int_{\R^d} \left|\frac{\partial }{\partial x_i}\Gamma(x,t,y,s) \right| |g(y,s)|^q dy \right)^{\frac{1}{q}} \\
&& \quad \leq C(t-s)^{-(q-1)/(2q)} .
\end{eqnarray*}
The rest of the proof is exactly the same as in the previous lemma.
\end{proof}

\bigskip\noindent
{\bf Acknowledgements.}  The work of the second author was partially supported by Russian Science Foundation, project number 14-50-00150.

\bibliographystyle{plain}
\bibliography{recherchebib}

\end{document}